\newtheorem{thm}{Theorem}[section]
\newtheorem{cor}[thm]{Corollary}
\newtheorem{lem}[thm]{Lemma}
\newtheorem{prop}[thm]{Proposition}
\theoremstyle{definition}
\newtheorem{defn}[thm]{Definition}
\theoremstyle{remark}
\newtheorem{rem}[thm]{Remark}
\numberwithin{equation}{section}
\newcommand{\norm}[1]{\left\Vert#1\right\Vert}
\newcounter{stepnum}
\def\bee{\begin{eqnarray}}
\def\beee{\begin{eqnarray*}}
\def\eee{\end{eqnarray}}
\def\eeee{\end{eqnarray*}}
\def\ba{\begin{array}}
\def\ea{\end{array}}
\def\R{\mathbb R}
\newcommand{\M}{M}
\newcommand{\Rmnum}[1]{\expandafter\@slowromancap\romannumeral #1@}
\begin{document}

\title[Super-Liouville equation with a spinorial Yamabe type  term]{Super-Liouville equation with a spinorial Yamabe type  term}

\author[Liu]{Lei Liu}
\address{School of Mathematics and Statistics, Key Laboratory of Nonlinear Analysis and Applications (Ministry of Education), Hubei Key Laboratory of Mathematical Sciences, Central China Normal
University, Wuhan, 430079, People's Republic of China}%
\email{leiliu2020@ccnu.edu.cn}

\author[Wei]{Mingjun Wei}
\address{School of Mathematics and Statistics,  Central China Normal
University, Wuhan, 430079, People's Republic of China}
\email{mingjunw@mails.ccnu.edu.cn}

\thanks{}

\subjclass[2010]{}
\keywords{Super-Liouville equation, Blow-up analysis, Energy identity}

\date{\today}
\begin{abstract}
In this paper, we study the super-Liouville equation with a spinorial Yamabe type  term, a natural generalization of Liouville equation, super-Liouville equation and spinorial Yamabe type equation. We establish some refined qualitative properties for such a blow-up sequence. In particular, we show energy identities not only for  the spinor part but also for the function part. Moreover, the local masses at a blow-up point are also computed. A new phenomenon is that there are two kinds of singularities and local masses due to the nonlinear spinorial Yamabe type term, which is different from super-Liouville equation.
\end{abstract}
\maketitle
\section{Introduction}

The compactness of the solution space of a nonlinear partial differential equation plays an important role in the study of the corresponding existence problem. In dimension two, many interesting variational problems in geometry and physics are borderline cases of the Palais-Smale condition and thus one can not use variational methods directly to get the existence result, such as harmonic maps, minimal and prescribed mean curvature surfaces in Riemannian manifold, Liouville type equation, Ginzburg-Landau type problems and so on.

The classical Liouville functional for a real-valued function $u$ on a Riemann surface $(M,g)$ is $$L_1(u)=\int_M\left(\frac{1}{2}|\nabla u|^2+K_gu-e^{2u}\right)dM,$$ where $K_g$ is the Gauss curvature of $M$. The Euler-Lagrange equation for $L_1$ is the following Liouville equation $$-\Delta_g u=2e^{2u}-K_g,$$ which plays a fundamental role in many two dimensional physical models, complex analysis and differential geometry of Riemann surface, in particular in the problem of prescribed Gaussian curvature. The blow-up analysis for Liouville equation was  systematically developed by Brezis-Merle \cite{Brezis},  Li-Shafrir \cite{Li-Shafrir} and Li \cite{Li} etc.

In physics, Liouville type equation also occurs naturally in string theory as discovered by Polyakov \cite{Polyakov}, from the gauge anomaly in quantizing the string action. There is also a natural supersymmetric version of the Liouville functional, coupling the bosonic scalar field to a fermionic spinor field. Motivated from supersymmetric string theory, Jost-Wang-Zhou \cite{Jost-Wang-Zhou} introduced the following super-Liouville functional for a real-valued function $u$ and a complex valued spinor $\psi$ $$L_2(u,\psi)=\int_M\left(\frac{1}{2}|\nabla u|^2+K_gu+\langle (\slashed{D}_g+e^u)\psi,\psi \rangle_{\Sigma M}-e^{2u}\right)dM,$$ which is conformally invariant.
Here, $(M,g)$ is a Riemann surface with a fixed spin structure, $\Sigma M$  the spinor bundle over $M$ and $\langle\cdot,\cdot\rangle_{\Sigma M}$  the metric on $\Sigma M$. Choosing a local orthonormal basis ${e_\alpha,\alpha=1,2}$ on $M$, the  Dirac operator is
defined as $\slashed{D}_g:=e_\alpha\cdot\nabla_{e_\alpha}$, where $\nabla$ is the spin connection on $\Sigma M$ and $\cdot$ is the Clifford multiplication. This multiplication is  skew-adjoint:
\[
\langle X\cdot\psi,\varphi \rangle_{\Sigma M}=-\langle \psi,X\cdot\varphi \rangle_{\Sigma M}
\]
for any $X\in\Gamma(TM)$, $\psi,\ \varphi\in\Gamma(\Sigma M)$.

The Euler-Lagrange equation for $L_2$ is the following super-Liouville equation
\begin{align*}
\begin{cases}
-\Delta_g u&=2e^{2u}-e^u\langle \psi,\psi\rangle-K_g,\\
-\slashed{D}_g\psi &=e^u\psi,
\end{cases} \ \ in \ \ M.
\end{align*}
Similarly to Liouville equation, super-Liouville equation is also conformally invariant, which implies that the solution space is in general not compact. The blow-up analysis for super-Liouville equation was systematically developed in \cite{Jost-Wang-Zhou,Jost-Wang-Zhou-Zhu-1, Jost-Zhou-Zhu} etc.

In two dimension, there is a nonlinear Dirac-type system, the so called spinorial Yamabe type equation which is defined as follows:
\begin{equation}\label{equat:02}
\slashed{D}_g\psi=H(x)|\psi|^2\psi\ \ in\ \ M,
\end{equation} where $|\psi|^2=\langle\psi,\psi\rangle_{\Sigma M}$, $\psi\in \Gamma(\Sigma M)$. This type of nonlinear Dirac equations appears naturally in  geometry and physics. Firstly, by spinorial Weierstrass representation, the solution of \eqref{equat:02} can be used to find the existence of prescribed mean curvature surfaces in $\R^3$ (here the function $H$ plays the role of the mean curvature). See for instance \cite{Taimanov}. Secondly, consider the Dirac-harmonic map with curvature term introduced by Chen-Jost-Wang \cite{Chen-Jost-Wang-1,Chen-Jost-Wang-2},  which was derived from the nonlinear supersymmetric $\sigma$-model of quantum field theory,  then the nonlinear Dirac equation for the spinor reduces to \eqref{equat:02} when the map is a constant. The blow-up analysis for \eqref{equat:02} was studied by \cite{Zhu,Chen-Liu-Zhu}.

\

In this paper, we consider the following functional
\begin{equation}\label{equat:functional}
L(u,\psi):=\int_M\left(\frac{1}{2}|\nabla u|^2+K_gu+\langle (\slashed{D}_g+e^u)\psi,\psi \rangle_{\Sigma M}+F(x)|\psi|^4-e^{2u}\right)dM,
\end{equation} where $F(x): M\to\R$ is a $C^1$ function. The Euler-Lagrange equation for $L$ is the following system
\begin{align}\label{equat:01}
\begin{cases}
-\Delta u&=2e^{2u}-e^u\langle\psi,\psi\rangle-K_g,\\
\slashed{D}_g\psi&=-e^u\psi-2F|\psi|^2\psi,
\end{cases} \ \ in\ \ M.
\end{align}

We call \eqref{equat:01} the \emph{super-Liouville equation with a spinorial Yamabe type term}. It is easy to see that the special forms of \eqref{equat:01} include Liouville equation ($\psi\equiv 0$), super-Liouville equation ($F\equiv 0$) and the spinorial Yamabe-type equation \eqref{equat:02} (when the function $u$ vanishes). The important point is that this generalization preserves a fundamental property of the energy functional on Riemann surfaces, i.e. the conformal invariance.

\

In present paper, we will provide an analytic properties for super-Liouville equation with a spinorial Yamabe type term, such as removable singularity, Brezis-Merle type concentration phenomenon, energy identity, local masses and so on.

Denote $$E(u,\psi;\Omega):=\int_{\Omega}\left(e^{2u}+|\psi|^4\right)dM,\ \ \Omega\subset M.$$
Now we state our  first main result which is about the following Brezis-Merle type concentration phenomenon.

\begin{thm}\label{thm:main-1}
Let $(u_n,\psi_n)$ be a sequence of smooth solutions of \eqref{equat:01}  with bounded energy $$E(u_n,\psi_n;M)\leq C.$$ Define
\begin{align*}
\Sigma_1&:=\{x\in M|\ \mbox{there is a sequence of points } x_n\to x\mbox{ such that } u_n(x_n)\to+\infty\},\\
\Sigma_2&:=\{x\in M|\ \mbox{there is a sequence of points } x_n\to x\mbox{ such that } |\psi_n(x_n)|\to+\infty\}.
\end{align*} Then $\Sigma=\Sigma_1\cup\Sigma_2$ is a finite point set. Passing to a subsequence, we have
\begin{itemize}
  \item[(1)] $\psi_n$ is bounded in $L^\infty_{loc}(M\setminus \Sigma_2)$.

  \

  \item[(2)] For $u_n$, one of the following alternatives holds:
  \begin{itemize}
    \item[(a)] $u_n$ is bounded in $L^\infty(M)$.

    \

    \item[(b)] $u_n\to -\infty$ uniformly on $M$.

    \

    \item[(c)] $\Sigma$ is nonempty and either $u_n$ is bounded in $L^\infty_{loc}(M\setminus \Sigma)$ or $u_n\to -\infty$ uniformly on any compact subsets of $M\setminus \Sigma$.
  \end{itemize}
\end{itemize}
\end{thm}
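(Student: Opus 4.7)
The plan is to reduce the proof to an $\varepsilon$-regularity lemma, which I would establish by a coupled bootstrap between the scalar equation for $u_n$ and the Dirac equation for $\psi_n$. After passing to a subsequence, the uniformly bounded Radon measures $e^{2u_n}\,dM$ and $|\psi_n|^4\,dM$ converge weakly-$\ast$ to finite Radon measures $\mu$ and $\nu$ on $M$. Set
\[
\Sigma_c := \{\,x\in M : (\mu+\nu)(\{x\})\geq \eps_0\,\}
\]
for a small threshold $\eps_0>0$ to be fixed below; since the total mass is finite, $\Sigma_c$ is a finite set. The central technical claim I would prove is: if $x\notin \Sigma_c$, then there is a neighborhood of $x$ on which both $u_n^+$ and $|\psi_n|$ are uniformly bounded in $n$. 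Granted this, $\Sigma_1\cup\Sigma_2\subseteq \Sigma_c$ is finite, and assertion (1) is then immediate from the definition of $\Sigma_2$.

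To prove the $\varepsilon$-regularity, assume $\int_{B_{2r}(x)}(e^{2u_n}+|\psi_n|^4)\,dM < \eps_0$ for some small $r>0$. Using the elementary bound $e^{u_n}|\psi_n|^2\leq \tfrac12(e^{2u_n}+|\psi_n|^4)$, the right-hand side of the $u_n$-equation is bounded in $L^1(B_{2r}(x))$ with norm $\leq C\eps_0$. Choosing $\eps_0$ strictly below the Brezis--Merle threshold yields $\int_{B_r(x)} e^{p u_n}\,dM \leq C$ for some $p>2$. On the Dirac side,
\[
|\slashed{D}_g\psi_n|\leq e^{u_n}|\psi_n| + 2\|F\|_{L^\infty}|\psi_n|^3,
\]
and H\"older's inequality pairs the $L^p$-control of $e^{u_n}$ with the small $L^4$-mass of $\psi_n$ to place the right-hand side in $L^s(B_r(x))$ with small norm for some $s$ slightly above $4/3$. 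Standard $L^p$-theory for $\slashed{D}_g$ combined with the two-dimensional Sobolev embedding $W^{1,s}\hookrightarrow L^{2s/(2-s)}$ then improves the integrability of $\psi_n$; iterating finitely many times gives $\psi_n\in L^\infty_{\mathrm{loc}}$, and feeding this back into the Laplacian via standard elliptic regularity produces the required upper bound on $u_n$.

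For assertion (2), when $\Sigma=\emptyset$ the $\varepsilon$-regularity delivers uniform global bounds on $\psi_n$ and on $u_n^+$, so the right-hand side $f_n$ of $-\Delta u_n=f_n$ is uniformly bounded in $L^\infty(M)$; writing $u_n=\bar u_n+v_n$ with $\bar u_n$ the mean value, elliptic estimates give $\|v_n\|_{L^\infty(M)}\leq C$, so either $\bar u_n$ stays bounded (case (a)) or $\bar u_n\to -\infty$ and hence $u_n\to -\infty$ uniformly (case (b)). When $\Sigma\neq\emptyset$, the $\varepsilon$-regularity controls $\psi_n$ and $u_n$ above on every compact subset of $M\setminus\Sigma$, and the classical Brezis--Merle--Li--Shafrir trichotomy applied to $u_n$ on the punctured surface delivers case (c). The main obstacle is the coupling in the $\varepsilon$-regularity bootstrap: improving $\psi_n$ consumes integrability of $e^{u_n}$, while improving $u_n$ consumes integrability of $|\psi_n|^2$; the exponents in the H\"older--Sobolev chain must be chosen compatibly, and the critical spinorial term $F|\psi_n|^2\psi_n$ has to be absorbed through the smallness of $\|\psi_n\|_{L^4}$ rather than through any decay of $e^{u_n}$, which is the genuinely new analytic feature compared with the pure super-Liouville case.
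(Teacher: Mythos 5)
Your proposal reduces the theorem to exactly the small-energy $\varepsilon$-regularity lemma that the paper proves as Lemma~\ref{lem:small-energy-regul}, and your definition of the concentration set via weak-$\ast$ limits of $e^{2u_n}\,dM + |\psi_n|^4\,dM$ is equivalent to the paper's $S=\{x : \liminf_n E(u_n,\psi_n;B_r(x))\geq\epsilon_1\ \forall r\}$, so the argument is essentially the one in the paper. The only organizational difference is that the paper applies the Dirac absorption first (getting $\psi_n\in L^{16}_{\mathrm{loc}}$ from smallness of $\|e^{u_n}\|_{L^2}$ and $\|\psi_n\|_{L^4}$ alone, without invoking Brezis--Merle) and only then runs Brezis--Merle on the scalar equation, whereas you invert the order; both work, and your closing remark correctly identifies the key point that the cubic term $F|\psi_n|^2\psi_n$ must be absorbed via smallness of $\|\psi_n\|_{L^4}$ at a fixed exponent $s<2$ rather than improved by a genuine exponent-raising iteration.
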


\

Different from super-Liouville equation \cite{Jost-Wang-Zhou} where $\Sigma_2\subset \Sigma_1$, super-Liouville equation with a spinorial Yamabe type term will no longer possess such property. This is why we define  two kinds of singularities below (see Definition \ref{def:01}).

\

Next we want to construct the full blow up theory for \eqref{equat:01}. Similar to Liouville equation and super-Liouville system, this should contain the energy identity for solutions, the blow-up values, and the profile of
solutions near the blow-up point.

For this aim, we first  establish the following energy identity for the spinor.
\begin{thm}\label{thm:main-2}
  Let $M$ be a closed Riemannian surface with a fixed spin structure. Suppose $(u_n,\psi_n)$ is a sequence of smooth solution to \eqref{equat:01} with bounded energy $  E(u_n,\psi_n;M)<C$.
  Suppose $\Sigma\neq \emptyset$ and set $\Sigma:=\{x_1,\cdots,x_l\}$. Then there are finitely many solutions $(u^{i,k},\psi^{i,k}),\ \ i=1,\cdots,l;\ k=1,\cdots,L_i$ of
  \begin{equation}\label{equat:10}
    \begin{cases}
      -\Delta u^{i,k} &= 2e^{2u^{i,k}} - e^{u^{i,k}} |\psi^{i,k}|^2-K_{g_{S^2}} ,\\
      \slashed{D}\psi^{i,k} &= -e^{u^{i,k}}\psi^{i,k} - 2\mu_i|\psi^{i,k}|^2\psi^{i,k},
  \end{cases}\ \ on\ \  S^2,
\end{equation}
   where $\mu_i=F(x_i)$ is a constant, $(S^2,g_{S^2})$ is the standard two dimensional sphere, $K_{g_{S^2}}=1$ is the Gauss curvature, such that
  passing to a subsequence, $\psi_n$ converges in $C_{loc}^{\infty}$ to $\psi$ on $M\setminus \Sigma$.

  Moreover, we have the energy identity
  \begin{equation*}
    \lim_{n\rightarrow \infty} \int_{M} |\psi_n|^4dM = \int_{M}|\psi|^4dM + \sum_{i=1}^{l} \sum_{k=1}^{L_i} \int_{S^2} |\psi^{i,k}|^4dx.
  \end{equation*}

\end{thm}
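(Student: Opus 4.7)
The plan is to combine Theorem 1.1 with an iterative bubbling scheme and a no-neck analysis adapted to the coupled Liouville--Yamabe system.

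First, I would extract the weak limit off $\Sigma$. By Theorem 1.1(1), $\psi_n$ is locally uniformly bounded on $M\setminus \Sigma_2$, and by Theorem 1.1(2) applied away from $\Sigma_1$ the conformal factor $e^{u_n}$ is locally uniformly controlled. Feeding these bounds into the Dirac equation
\[
\slashed{D}_g\psi_n=-e^{u_n}\psi_n-2F|\psi_n|^2\psi_n,
\]
a standard elliptic bootstrap yields, along a subsequence, $\psi_n\to \psi$ in $C^\infty_{loc}(M\setminus\Sigma)$. Consequently $|\psi_n|^4\, dM$ converges as Radon measures to $|\psi|^4\, dM+\sum_{i=1}^l \nu_i\delta_{x_i}$ for nonnegative concentration masses $\nu_i$, and the theorem reduces to identifying each $\nu_i$ as a finite sum of $L^4$-energies of bubbles on $S^2$.

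Second, at each $x_i$ I would extract bubbles iteratively. In an isothermal chart around $x_i$ the equation \eqref{equat:01} becomes its Euclidean model with $F\equiv\mu_i$ up to a vanishing perturbation. At each stage I would pick a center $x_n\to x_i$ and a scale $\lambda_n\to 0$ adapted to the blow-up mechanism (e.g.\ $\lambda_n=e^{-u_n(x_n)}$ when the $e^{2u}$-energy concentrates, $\lambda_n=|\psi_n(x_n)|^{-2}$ when the spinor concentrates), and rescale using the natural conformal weights
\[
\widetilde{u}_n(y)=u_n(x_n+\lambda_n y)+\log\lambda_n,\qquad \widetilde{\psi}_n(y)=\lambda_n^{1/2}\,\psi_n(x_n+\lambda_n y).
\]
Conformal invariance of \eqref{equat:01} makes $(\widetilde u_n,\widetilde \psi_n)$ a solution of the same system on exhausting balls with $F(x_n+\lambda_n y)\to \mu_i$, and small-energy $\eps$-regularity produces $C^\infty_{loc}(\R^2)$ convergence to a nontrivial finite-energy solution of the limiting constant-coefficient system on $\R^2$. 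A removable-singularity result for this system together with stereographic projection then lift the limit to a solution $(u^{i,k},\psi^{i,k})$ of \eqref{equat:10} on $S^2$. Iterating with the standard separated-scales procedure, together with an energy gap $\eps_0>0$ for nontrivial bubbles of \eqref{equat:10} (analogous to the gap used in \cite{Zhu,Chen-Liu-Zhu}) and the global bound $E(u_n,\psi_n;M)\leq C$, forces the induction to terminate after $L_i<\infty$ bubbles at $x_i$.

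The principal obstacle is the no-neck step. To close the energy identity I must show that on each annular neck region between consecutive bubble scales $A_n(R,\delta)=B_\delta(x_n)\setminus B_{R\lambda_n}(x_n)$,
\[
\lim_{R\to\infty,\ \delta\to 0}\ \limsup_{n\to\infty}\int_{A_n(R,\delta)}|\psi_n|^4\, dx=0.
\]
My strategy is to rewrite the system in cylindrical coordinates $(t,\theta)\in[T_1,T_2]\times S^1$ and derive a differential inequality for the tangential $L^4$-energy of $\psi_n$ by combining a Pohozaev-type identity for the coupled system with the small-energy Dirac estimates, producing exponential decay of the spinor energy in cylindrical time. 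The coupling term $e^{u_n}|\psi_n|^2$ is absorbed using that $e^{u_n}$ is either uniformly bounded or tends to $0$ uniformly on the neck by Theorem 1.1(2); controlling this interaction between the Liouville part and the Yamabe-type part is the technically delicate point of the argument. Summing the contributions of $\psi$ on $M\setminus \Sigma$, of each bubble $\psi^{i,k}$, and of the vanishing neck integrals then yields the claimed energy identity.
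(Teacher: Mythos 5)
Your overall scheme matches the paper's: extract the weak limit of $\psi_n$ off $\Sigma$ by elliptic bootstrap, rescale at each concentration point with either $\lambda_n=e^{-u_n(x_n)}$ or $\lambda_n=|\psi_n(x_n)|^{-2}$ (the paper's ``first type'' versus ``second type'' singular points), pass to the planar limit, remove the singularity at infinity, and iterate; finiteness of the bubble tree follows from the $\varepsilon$-regularity threshold. The genuine gap is in your no-neck step.

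You assert that the coupling term $e^{u_n}|\psi_n|^2$ can be absorbed because ``$e^{u_n}$ is either uniformly bounded or tends to $0$ uniformly on the neck by Theorem~\ref{thm:main-1}(2).'' That theorem only gives the dichotomy on compact subsets of $M\setminus\Sigma$, whereas the neck $B_\delta(x_n)\setminus B_{\lambda_nR}(x_n)$ has inner radius tending to $0$, so on intermediate scales neither Theorem~\ref{thm:main-1} nor the bubble extraction controls $e^{2u_n}$ a priori. This control is exactly the content of a substantial intermediate lemma in the paper (Lemma~\ref{lem:04}): one must first prove
\[
\lim_{\delta\to0}\lim_{R\to\infty}\lim_{n\to\infty}\ \sup_{t\in[\lambda_nR,\delta]}\int_{B_{2t}(x_n)\setminus B_t(x_n)}\bigl(e^{2u_n}+|\psi_n|^4\bigr)\,dx=0,
\]
and the paper does this by contradiction: concentration at an intermediate scale yields, after rescaling, a limit $(v,\phi)$ on $\R^2\setminus\{0\}$ whose singularities at $0$ and at $\infty$ are shown removable via a Pohozaev-constant computation (Lemma~\ref{lem:pohozaev-const} and Proposition~\ref{Removability of local singularity}), producing an extra bubble and contradicting the induction hypothesis. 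This is where the Pohozaev identity for the coupled system is actually deployed, rather than in a cylindrical decay estimate as you propose. Without this no-concentration lemma your exponential-decay argument has nothing to latch onto, since the Liouville part could a priori concentrate somewhere in the neck. Once Lemma~\ref{lem:04} is in hand, the paper then closes the neck with an elliptic annulus estimate for the Dirac operator (Lemma~\ref{lem:07}) and a dyadic decomposition into a uniformly bounded number of small-energy annuli, which is more direct than the three-circle/exponential-decay route you sketch; your route could likely be substituted at that stage, but only after the missing no-concentration step is supplied.
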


\

\begin{rem}
In our paper,  we call $(u,\psi)$ is a bubble if it satisfies equation \eqref{equat:10}. We want to remark that for bubble solution \eqref{equat:10}, if $u=-\infty$, i.e. the function part vanishes, it reduces to a spinorial Yamabe type equation $$\slashed{D}\psi=-2\mu |\psi|^2\psi\ \ on\ \ S^2.$$ Then we call $\psi$ \emph{a spinorial Yamabe type bubble}.  Otherwise, if the function part does not vanish, we call $(u,\psi)$ \emph{a super-Liouville type bubble}.
\end{rem}

\

For super-Liouville equation with a spinorial Yamabe type term, Theorem \ref{thm:main-1} tells us that there may appear two kinds of bubbles at a same blow-up point, i.e. super-Liouville type bubble and spinorial Yamabe type bubble, which makes the blow-up phenomenon and related analysis different and in fact more complicated than Liouville equation and super-Liouville equation. In order to characterise the refined quantitative properties, we need to define two kinds of singularities for the blow-up set $\Sigma$.

\

Since $\Sigma$ has finite points, we can take $r_0>0$ small such that $B_{r_0}(x_i),\ x_i\in \Sigma$ are mutually disjointed.
\begin{defn}\label{def:01}
$p\in\Sigma$ is called a first type singular point for $(u_n,\psi_n)$ if there exists a universal constant $C>0$ such that $$\liminf_{n\to\infty}\big(\max_{x\in\overline{B_{r_0}(p)}}u_n(x)-2\ln (1+\max_{x\in\overline{B_{r_0}(p)}}|\psi_n(x)|)\big)\geq -C.$$ Otherwise, we call $p\in\Sigma$ a second type singular point.
\end{defn}

We want to remark that for super-Liouville equations \cite{Jost-Wang-Zhou}, the second type singularity will not appear.

\

Denote
\begin{align*}
\Sigma_3:=\bigg\{p\in\Sigma\ |\ \ p\ \ \mbox{ is a first type singular point} \bigg\}.
\end{align*}

 With the help of Spinor's energy identity Theorem \ref{thm:main-2}, if the blow-up set $\Sigma_3\neq \emptyset$, then we can rule out the first case in $(c)$ of Theorem \ref{thm:main-1}. We have the following result that
\begin{thm}\label{thm:main-3}
   Assume that $(u_n,\psi_n)$ is a sequence of solution to \eqref{equat:01} with finite energy condition $E(u_n,\psi_n;M)<C$ and the blow-up set $\Sigma_3\neq \emptyset$, then
  \begin{equation*}
    u_n \rightarrow -\infty\ \ uniformly\ \ on \  \ compact\ \ subset\ \ of \ \ M \setminus \Sigma
  \end{equation*}and
  \begin{equation*}
    2e^{2u_n} - e^{u_n}|\psi_n|^2  \longrightarrow \sum_{x_i\in \Sigma_3} \alpha_i \delta_{x_i}+\sum_{x_i\in \Sigma\setminus \Sigma_3} \beta_i \delta_{x_i}
  \end{equation*}
  in the sense of measure where $\alpha_i\ge 4\pi$.
\end{thm}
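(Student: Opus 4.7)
The plan is to invoke Theorems \ref{thm:main-1} and \ref{thm:main-2} to pin down the convergence, exclude the locally bounded subcase of (c) of Theorem \ref{thm:main-1}, and then describe the limit measure. Since $\Sigma_3\subset\Sigma$ is nonempty, alternatives (a) and (b) are ruled out, so we are in case (c). The main task is to exclude the subcase where $u_n$ is bounded in $L^\infty_{loc}(M\setminus\Sigma)$. Assume for contradiction that it holds. Pick $p\in\Sigma_3$, choose $x_n\to p$ realizing $M_n:=u_n(x_n)=\max_{\overline{B_{r_0}(p)}}u_n\to+\infty$, set $\lambda_n:=e^{-M_n}$, and rescale
\[
\tilde{u}_n(y):=u_n(x_n+\lambda_n y)+\ln\lambda_n,\qquad \tilde{\psi}_n(y):=\lambda_n^{1/2}\psi_n(x_n+\lambda_n y).
\]
The first-type condition forces $\|\tilde{\psi}_n\|_{L^\infty}\leq C$, while $\tilde{u}_n(0)=0$ and $\tilde{u}_n\leq 0$. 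Standard elliptic and Dirac regularity yield a subsequential $C^\infty_{loc}(\R^2)$ limit $(\tilde{u},\tilde{\psi})$ satisfying the bubble equations of \eqref{equat:10} (with $\mu_i$ replaced by $\mu_p=F(p)$) with finite total energy---a super-Liouville bubble. By conformal invariance it extends to a solution on $S^2$, and Gauss-Bonnet on $S^2$ gives $\int_{\R^2}(2e^{2\tilde{u}}-e^{\tilde{u}}|\tilde{\psi}|^2)dx=4\pi$, forcing the asymptotic $\tilde{u}(y)=-2\ln|y|+O(1)$ as $|y|\to\infty$.

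Next, average the $u$-equation around $x_n$. Setting
\[
\bar{u}_n(r):=\frac{1}{2\pi r}\int_{\partial B_r(x_n)}u_n\,ds,\qquad F_n(r):=\int_{B_r(x_n)}(2e^{2u_n}-e^{u_n}|\psi_n|^2)\,dx,
\]
the divergence theorem gives $\bar{u}_n'(r)=-F_n(r)/(2\pi r)+O(r)$, so integrating from $R\lambda_n$ to a fixed small $r_1>0$ yields
\[
\bar{u}_n(r_1)-\bar{u}_n(R\lambda_n)=-\int_{R\lambda_n}^{r_1}\frac{F_n(r)}{2\pi r}\,dr+O(r_1^2).
\]
A bubble-tree construction at $p$ (finite by the spinor energy quantization afforded by Theorem \ref{thm:main-2}) together with a standard no-neck-energy estimate gives $F_n(r)=4\pi L_p+o(1)$ uniformly for $R\lambda_n\leq r\leq r_1$, where $L_p\geq 1$ counts the super-Liouville bubbles at $p$, while the outermost bubble's tail yields $\bar{u}_n(R\lambda_n)=-2\ln R+M_n+O(1)$. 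Combining, the dominant $M_n$ term is $-(2L_p-1)M_n$, hence $\bar{u}_n(r_1)\to-\infty$, contradicting boundedness of $u_n$ on $\partial B_{r_1}(p)\subset M\setminus\Sigma$. This excludes the subcase, and $u_n\to-\infty$ uniformly on compact subsets of $M\setminus\Sigma$.

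For the measure convergence, the uniform $u_n\to-\infty$ on compact $K\subset M\setminus\Sigma$, combined with the local $L^\infty$-bound on $\psi_n$ from Theorem \ref{thm:main-1}, gives $2e^{2u_n}-e^{u_n}|\psi_n|^2\to 0$ uniformly on $K$. Gauss-Bonnet applied to $-\Delta u_n=(2e^{2u_n}-e^{u_n}|\psi_n|^2)-K_g$ shows $\int_M(2e^{2u_n}-e^{u_n}|\psi_n|^2)\,dM=2\pi\chi(M)$ is uniformly bounded. Passing to a subsequence, $2e^{2u_n}-e^{u_n}|\psi_n|^2$ converges weakly-$\ast$ to a finite sum of Dirac masses on $\Sigma$, yielding the stated decomposition. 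At each $x_i\in\Sigma_3$, $\alpha_i$ equals the sum of $u$-masses of the super-Liouville bubbles at $x_i$ (spinorial Yamabe bubbles, with $u\equiv-\infty$, contribute nothing); each such bubble contributes exactly $4\pi$ by the $S^2$-Gauss-Bonnet computation above, and the first-type condition ensures at least one is present, so $\alpha_i\geq 4\pi$.

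The main obstacle is the rigorous bubble-tree construction and no-neck-energy estimate used in the second paragraph: one must enumerate all super-Liouville bubbles at $p$ through iterative rescaling and show that no $u$-mass is lost on the annular neck regions between consecutive scales. Theorem \ref{thm:main-2} is decisive here by ruling out hidden spinorial concentration in the necks that could feed back into the $u$-equation and distort the effective mass profile, while its spinor energy quantization keeps the bubble tree finite.
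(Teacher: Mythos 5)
Your proposal takes the correct high-level route (rule out the locally bounded sub-alternative of case $(c)$ of Theorem \ref{thm:main-1}, conclude $u_n\to-\infty$, then pass to the measure convergence), and your final paragraph on the weak-$\ast$ limit and $\alpha_i\geq 4\pi$ agrees with the paper. However, the paper's contradiction mechanism is simpler than yours and bypasses the bubble-tree machinery entirely, and your version contains an incorrect step.

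The paper argues as follows: if $u_n$ were bounded in $L^\infty_{loc}(M\setminus\Sigma)$, the $C^\infty_{loc}$-limit $(u,\psi)$ solves the system on a punctured disk around $p\in\Sigma_3$, and Proposition \ref{Removability of local singularity} gives $u(x)=-\frac{\gamma}{2\pi}\ln|x-p|+O(1)$ with the \emph{strict} bound $\gamma<2\pi$ coming from $e^{2u}\in L^1$. On the other hand $\gamma=m(p)$, and splitting $B_\delta(p)$ into the first-bubble ball $B_{\lambda_n R}(x_n)$ and the annulus, Proposition \ref{prop:01} gives $4\pi$ from the super-Liouville bubble, the spinor no-neck estimate (a corollary of Theorem \ref{thm:main-2}) kills $\int_{\mathrm{annulus}}e^{u_n}|\psi_n|^2$ via Cauchy--Schwarz, and the nonnegative term $2e^{2u_n}$ on the annulus is simply dropped, giving $\gamma\geq 4\pi$. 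Contradiction. No bubble tree and no ODE integration are required.

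Your ODE route through $\bar{u}_n(r)$ is a legitimate alternative mechanism, but the claim that $F_n(r)=4\pi L_p+o(1)$ \emph{uniformly} for $R\lambda_n\leq r\leq r_1$ is wrong: $F_n$ is a continuous running integral, so if bubbles at $p$ occur at several separated scales it must pass through intermediate values ($\approx 4\pi$ just above the innermost bubble, $\approx 4\pi L_p$ only near $r_1$), not sit constantly at $4\pi L_p$. What your argument actually needs, and what is true, is the one-sided bound $F_n(r)\geq 4\pi-o(1)$ for all $r\in[R\lambda_n,r_1]$: $F_n(R\lambda_n)\to 4\pi$ from the first bubble, and the only negative contribution as $r$ increases, $-\int e^{u_n}|\psi_n|^2$ over the annulus, vanishes in the limit by Cauchy--Schwarz and the spinor no-neck estimate. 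Feeding this into $\bar{u}_n'(r)=-F_n(r)/(2\pi r)+O(r)$ and integrating from $R\lambda_n$ (where $\bar{u}_n\approx M_n-2\ln R$) to $r_1$ gives $\bar{u}_n(r_1)\leq -(1-o(1))M_n+O(1)\to-\infty$; the correct dominant coefficient is $-M_n$, not $-(2L_p-1)M_n$. With this correction the contradiction holds, but your route is markedly heavier than the paper's, which reads the contradiction off the removability bound $\gamma<2\pi$ directly.
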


\

Defining the blow-up value (or local mass) at a blow-up point $p\in\Sigma$ as
\begin{equation*}
  m(p):=\lim_{r\rightarrow 0}\lim_{n\rightarrow 0} \int_{B_r(p)} \left(2e^{2u_n} - e^{u_n}|\psi_n|^2\right)dx,
\end{equation*}
different from Liouville equation and super-Liouville equation, we have two kinds of blow-up values due to the nonlinear spinor Yamabe term.
\begin{thm}\label{thm:main-4}
 Suppose $\Sigma_3\neq \emptyset$. If $p\in \Sigma_3$, then $ m(p)= 4\pi.$ If $p=x_i\in \Sigma\setminus \Sigma_3$, then $m(p)=0\ \ or\ \ 4\pi.$
\end{thm}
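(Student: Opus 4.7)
The plan is to compute $m(p)$ by combining the measure concentration statement of Theorem~\ref{thm:main-3} with a bubble extraction at $p$, and to use conformal compactification to $S^2$ to pin down the contribution of each bubble. The identity that drives the whole computation is the following: if $(u,\psi)$ is any super-Liouville type bubble satisfying \eqref{equat:10}, then integrating the first equation of \eqref{equat:10} over $S^2$, together with $K_{g_{S^2}}\equiv 1$ and $|S^2|=4\pi$, gives
\begin{equation*}
\int_{S^2}\bigl(2e^{2u}-e^u|\psi|^2\bigr)\,dM_{g_{S^2}}=4\pi,
\end{equation*}
whereas a spinorial Yamabe bubble has $u\equiv -\infty$ and contributes $0$ to this integral. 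Thus each blow-up contribution to $m(p)$ is either $0$ or $4\pi$, and the whole problem reduces to counting how many super-Liouville bubbles form at $p$.

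For $p\in\Sigma_3$, I would choose $x_n\to p$ with $u_n(x_n)=\max_{\overline{B_{r_0}(p)}}u_n\to +\infty$. By the first-type condition, $\epsilon_n:=e^{-u_n(x_n)}\to 0$ and $\epsilon_n^{1/2}|\psi_n|$ stays bounded on $\overline{B_{r_0}(p)}$, so the rescaling
\begin{equation*}
\tilde u_n(y):=u_n(x_n+\epsilon_n y)-u_n(x_n),\qquad \tilde\psi_n(y):=\epsilon_n^{1/2}\psi_n(x_n+\epsilon_n y)
\end{equation*}
subconverges in $C^\infty_{\mathrm{loc}}(\R^2)$ to a nontrivial super-Liouville type bubble, which extends via stereographic projection to a solution of \eqref{equat:10} on $S^2$ with $\mu=F(p)$. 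Theorem~\ref{thm:main-3} yields $u_n\to -\infty$ uniformly on every annulus $\{\,0<\delta\le |x-p|\le r_0\,\}$, so outside shrinking neighbourhoods of the bubble centres the integrand $2e^{2u_n}-e^{u_n}|\psi_n|^2$ is negligible. Iterating the extraction along the lines of Theorem~\ref{thm:main-2} splits the blow-up at $p$ into finitely many super-Liouville and spinorial Yamabe type bubbles plus necks; each super-Liouville bubble contributes $4\pi$ by the identity above, each spinorial Yamabe bubble contributes $0$, and a Pohozaev-type identity for the scalar equation (testing $-\Delta u_n$ with $(x-p)\cdot\nabla u_n$ and pairing with the corresponding Dirac identity) should force the neck contribution of $2e^{2u_n}-e^{u_n}|\psi_n|^2$ to be $o(1)$ and prevent a second super-Liouville bubble from splitting off. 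This yields $m(p)=4\pi$.

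For $p\in\Sigma\setminus\Sigma_3$, the second-type condition forces $\max u_n-2\ln(1+\max|\psi_n|)\to -\infty$, and the natural blow-up scale is the spinorial $\lambda_n:=(\max_{\overline{B_{r_0}(p)}}|\psi_n|)^{-2}$; then $\lambda_n^2 e^{2u_n}$ and $\lambda_n^2 e^{u_n}|\psi_n|^2$ vanish in the limit, and the system reduces to the pure spinorial Yamabe bubble $\slashed D\psi=-2\mu|\psi|^2\psi$ on $S^2$, which contributes $0$. If no super-Liouville type bubble arises at $p$ at any sub-leading scale, the neck analysis above gives $m(p)=0$; if one does, its $4\pi$ contribution yields $m(p)=4\pi$, completing the dichotomy. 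The hard part throughout is controlling the neck integral of the \emph{signed} density $2e^{2u_n}-e^{u_n}|\psi_n|^2$: the $|\psi_n|^4$ piece is handled by the spinor energy identity (Theorem~\ref{thm:main-2}), but $2e^{2u_n}-e^{u_n}|\psi_n|^2$ is not a priori non-negative, so a straight Brezis--Merle comparison is insufficient, and the crux is to show via the Pohozaev-type identity, combined with the uniform divergence $u_n\to -\infty$ on the neck from Theorem~\ref{thm:main-3}, that the necessary cancellation holds and the only admissible values of $m(p)$ are $0$ and $4\pi$.
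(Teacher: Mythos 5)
Your proposal has the right local observations (each super-Liouville bubble contributes $4\pi$ by integrating the first equation of \eqref{equat:10} over $S^2$; each spinorial Yamabe bubble contributes $0$; the dichotomy should reduce to counting super-Liouville bubbles), but the route you take to close the argument is not available at this stage of the paper and, as written, is circular. You defer the two hardest steps --- that the neck contribution of the \emph{signed} density $2e^{2u_n}-e^{u_n}|\psi_n|^2$ vanishes, and that at most one super-Liouville bubble can form at $p$ --- to ``a Pohozaev-type identity... should force... and prevent a second super-Liouville bubble from splitting off.'' These are exactly the contents of Lemmas~\ref{lem:05} and~\ref{lem:06}, whose proofs \emph{use} Theorem~\ref{thm:main-4}: the paper's argument there reads ``Since $m(p)=4\pi$, the second bubble must be Yamabe-type bubble. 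Otherwise $m(p)\geq 8\pi$.'' So you would be assuming the conclusion $m(p)\leq 4\pi$ in order to derive it. Without an independent a priori upper bound on $m(p)$ --- which you do not supply --- nothing in your sketch rules out two super-Liouville bubbles (hence $m(p)\geq 8\pi$), and the dichotomy does not follow.

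The paper avoids all bubble counting at this stage. Its proof applies the Pohozaev identity of Lemma~\ref{Pohozaev for Liouville} directly to the smooth sequence $(u_n,\psi_n)$ on a small ball around $p$, so the global Pohozaev constant is exactly $0$. The Green-function Lemma~\ref{lem:Green funciton for u_n} (available because $\Sigma_3\neq\emptyset$ and Theorem~\ref{thm:main-3} holds) gives the pointwise asymptotics $G=-\frac{m(p)}{2\pi}\log|x-p|+g$ of the weak limit of $u_n-\bar u_n$, from which
\begin{equation*}
\lim_{s\to 0}\lim_{n\to\infty}\, s\int_{\partial B_s}\Bigl(\bigl|\tfrac{\partial u_n}{\partial\nu}\bigr|^2-\tfrac12|\nabla u_n|^2\Bigr)=\frac{m(p)^2}{4\pi}.
\end{equation*}
The remaining boundary terms are controlled by Fubini along a good sequence of radii (using that $u_n\to-\infty$ so $e^{2u_n}\to 0$ on the boundary circles, and that $\psi_n$ converges away from $\Sigma$), and the $|\psi_n|^4\,x\cdot\nabla F$ term is $O(s)$. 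Passing to the limit yields the algebraic equation $\frac{m(p)^2}{4\pi}=m(p)$, hence $m(p)\in\{0,4\pi\}$, with no neck analysis and no bubble extraction at all. The statement $m(p)=4\pi$ for $p\in\Sigma_3$ is then immediate from Theorem~\ref{thm:main-3}, which already gives $m(p)\geq 4\pi$ there. This is the key idea you are missing: rather than summing over bubbles and fighting the necks, test Pohozaev against the original sequence and read off the quadratic relation from the singular profile of the limiting Green function.
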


\

Next we  give a more clear picture for the blow-up analysis in Theorem \ref{thm:main-2}. We also show the energy identity for the function part $u_n$. Precisely, our result is
\begin{thm}\label{thm:main-5}
Under assumptions  and notations of Theorem \ref{thm:main-2}, suppose $\Sigma_3\neq \emptyset$. Denote $$\tilde{\Sigma}_3:=\{p\in\Sigma\ |\ p\in\Sigma_3\ \ or\ \ p\in \Sigma\setminus \Sigma_3 \ \ with\ \  m(p)=4\pi \}.$$ Then for each $x_i\in \tilde{\Sigma}_3$, among the bubbles $(u^{i,k},\psi^{i,k})$, $k=1,...,L_i$ in Theorem \ref{thm:main-2}, there is only one super-Liouville type bubble (w.l.o.g, denoted by $u^{i,1}$) where the other bubbles are all spinorial Yamabe type bubble.

Moreover, we have following energy identity for the function that
 \begin{equation*}
    \lim_{n\rightarrow \infty} \int_{M} e^{2u_n}dM = \sum_{x_i\in \tilde{\Sigma}_3}  \int_{S^2} e^{2u^{i,1}}dx.
  \end{equation*}
\end{thm}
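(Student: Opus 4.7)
The strategy combines the mass quantization from Theorem \ref{thm:main-4} with a bubble-tree analysis at each blow-up point and a no-neck estimate for the function part. The first observation is that every super-Liouville type bubble $(u,\psi)$ solving \eqref{equat:10} on $(S^2,g_{S^2})$ contributes exactly $4\pi$ to the local mass: integrating the first equation of \eqref{equat:10} over $S^2$ and using $\int_{S^2}\Delta u\, dV=0$ together with $\int_{S^2}K_{g_{S^2}}\, dV=4\pi$ gives
\begin{equation*}
\int_{S^2}\bigl(2e^{2u}-e^u|\psi|^2\bigr)\, dV_{g_{S^2}}=4\pi.
\end{equation*}
By contrast, a spinorial Yamabe type bubble has $u\equiv -\infty$, so that both $e^{2u}$ and $e^u|\psi|^2$ vanish identically; in particular such a bubble contributes neither to the local mass nor to $\int e^{2u}$.

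Next I would perform the standard inductive bubble-tree extraction near each $x_i\in\Sigma$, rescaling at successive scales around the peaks of $u_n$ (respectively of $|\psi_n|$) as in the proof of Theorem \ref{thm:main-2}. Granting a no-neck estimate on each annular region between consecutive bubble scales, the local mass decomposes as
\begin{equation*}
m(x_i)=\sum_{k=1}^{L_i}\bigl(\text{mass of bubble }k\bigr)=4\pi\, N_i,
\end{equation*}
where $N_i$ counts the super-Liouville bubbles at $x_i$. Since Theorem \ref{thm:main-4} forces $m(x_i)\in\{0,4\pi\}$, we obtain $N_i\in\{0,1\}$, and by the definition of $\tilde\Sigma_3$ one has $N_i=1$ precisely when $x_i\in\tilde\Sigma_3$. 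This yields the uniqueness of the super-Liouville bubble $u^{i,1}$ asserted in the theorem and identifies all remaining bubbles as spinorial Yamabe type.

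For the energy identity for $u_n$, I would split $M$ into the exterior $M\setminus\bigcup_i B_r(x_i)$ and the balls $B_r(x_i)$. On the exterior, Theorem \ref{thm:main-3} yields $u_n\to -\infty$ uniformly, whence $\int_{M\setminus\cup_i B_r(x_i)}e^{2u_n}\, dM\to 0$ as $n\to\infty$ and then $r\to 0$. Inside each $B_r(x_i)$, the bubble-tree extraction together with a no-neck estimate for $e^{2u_n}$ gives
\begin{equation*}
\lim_{n\to\infty}\int_{B_r(x_i)}e^{2u_n}\, dM=\sum_{k=1}^{L_i}\int_{S^2}e^{2u^{i,k}}\, dV_{g_{S^2}}.
\end{equation*}
Since $\int_{S^2}e^{2u^{i,k}}=0$ for every spinorial Yamabe bubble, only the unique super-Liouville bubble at each $x_i\in\tilde\Sigma_3$ survives in the sum, yielding the stated identity.

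The main technical obstacle is the no-neck estimate for $e^{2u_n}$ (and simultaneously for $2e^{2u_n}-e^{u_n}|\psi_n|^2$) on each annular neck region: one must show that on the corresponding scales $u_n$ is effectively driven to $-\infty$ and that the coupling term $e^{u_n}|\psi_n|^2$ carries no residual concentration. I expect to accomplish this by combining an oscillation/three-annuli control for $u_n$ in the spirit of Brezis--Merle (Theorem \ref{thm:main-1}), the $\varepsilon$-regularity already used to extract bubbles in the proof of Theorem \ref{thm:main-2}, and the spinor energy identity of Theorem \ref{thm:main-2} itself to handle $e^{u_n}|\psi_n|^2$ via Hölder's inequality against the small $L^4$-norm of $\psi_n$ in each neck.
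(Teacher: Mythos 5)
Your high-level scaffolding matches the paper's: each super-Liouville bubble on $S^2$ carries local mass exactly $4\pi$ (your Gauss--Bonnet computation matches Proposition \ref{prop:01}), each spinorial Yamabe bubble carries zero mass and zero $e^{2u}$-energy, the quantization $m(p)\in\{0,4\pi\}$ from Theorem \ref{thm:main-4} then forces at most one super-Liouville bubble at each $p$, and the energy identity for $e^{2u_n}$ follows by bookkeeping once the neck regions are shown to carry no $e^{2u_n}$-energy. This is indeed the strategy of the paper (Lemmas \ref{lem:05} and \ref{lem:06}).

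However, what you label ``the main technical obstacle'' and then defer is in fact essentially the entire content of the proof, and your sketched mechanism does not supply it. Two concrete issues. First, to show $\int_{A_n}(2e^{2u_n}-e^{u_n}|\psi_n|^2)\,dx=o(1)$ on a neck $A_n=B_\delta(x_n)\setminus B_{\lambda_n R}(x_n)$, the paper does not use a Brezis--Merle oscillation bound; it integrates by parts to convert the neck mass into two boundary terms, one controlled by the Green's function expansion $G=-\frac{m(p)}{2\pi}\log|x-p|+g$ from Lemma \ref{lem:Green funciton for u_n}, the other by the asymptotics $\nabla\tilde u=-\frac{\alpha}{2\pi}\frac{x}{|x|^2}+O(|x|^{-1-\epsilon})$ of the super-Liouville bubble from Proposition \ref{prop:01}, and the neck mass vanishes precisely because $m(p)=\alpha=4\pi$ (see \eqref{equat:13}). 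An oscillation estimate alone, without this exact cancellation, cannot handle the signed integrand $2e^{2u_n}-e^{u_n}|\psi_n|^2$. Second, and more seriously, your deduction ``$m(x_i)=4\pi N_i$, hence $N_i=1$ when $m=4\pi$'' is circular for $p\in\Sigma\setminus\Sigma_3$: there the first (innermost) bubble is Yamabe type with zero mass, so the boundary term at scale $\lambda_n R$ vanishes and the integration by parts gives neck mass $m(p)=4\pi\neq 0$ \emph{unless} a second, super-Liouville bubble exists. You must first prove that such a bubble exists before the no-neck estimate is even true. The paper does exactly this in Claim 1 of Lemma \ref{lem:06}, via the ODE-averaging argument of Lemma \ref{lem:02} together with the fast-decay control of Lemma \ref{lem:01}: assuming no energy concentration in the neck leads to $\lim\int_{B_\delta\setminus B_{s^1_n}}(2e^{2u_n}-e^{u_n}|\psi_n|^2)=0$ for a well-chosen intermediate radius $s^1_n$ where the accumulated mass is $3\pi$, contradicting $m(p)=4\pi$. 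Your proposal does not contain an argument of this kind, so the existence of the super-Liouville bubble in the $\Sigma\setminus\Sigma_3$ case, and with it the uniqueness statement and the energy identity, remains unproved.

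A smaller caveat: the Hölder argument you propose for $\int_{A_n}e^{u_n}|\psi_n|^2$ does work once you have the spinor neck identity $\int_{A_n}|\psi_n|^4=o(1)$, but Theorem \ref{thm:main-2} as stated is a global energy identity; to invoke a \emph{neck} estimate you need the intermediate Lemma \ref{lem:04} (no energy loss on necks), and in the multi-bubble case you must track the spinor neck loss between \emph{every} pair of adjacent scales, which the paper does in the case analysis of Lemmas \ref{lem:05} and \ref{lem:06} (e.g. \eqref{equat:17}, \eqref{equat:14}, \eqref{equat:19}). Your one-paragraph sketch does not address the multi-bubble bookkeeping.
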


\

Lastly, we discuss the case that $\Sigma_3=\emptyset$, i.e. there is no first type singularity.
\begin{thm}\label{thm:main-6}
Under assumptions  and notations of Theorem \ref{thm:main-2}, suppose $\Sigma_3= \emptyset$. Then one of the following two alternatives holds.
\begin{itemize}
\item[(1)]  $u_n$ is uniformly bounded in $L^\infty_{loc}(M\setminus \Sigma)$, then passing to a subsequence, we have

\

 \begin{itemize}
 \item[(i)] $(u_n,\psi_n)\to (u,\psi)$ in $C^2_{loc}(M\setminus \Sigma)$ where $(u,\psi)$ satisfies super-Liouville equation with a spinorial Yamabe type term \eqref{equat:01} in $M$.

     \

  \item[(ii)] For any $p\in\Sigma$, there holds $m(p)=0$. This also implies that the bubbles $(u^{i,k},\psi^{i,k})$, $i=1,...,l$, $k=1,...,L_i$ in Theorem \ref{thm:main-2}, are all spinorial Yamabe type bubble.

      \

  \item[(iii)] Energy identity for the function
 \begin{equation*}
    \lim_{n\rightarrow \infty} \int_{M} e^{2u_n}dM =   \int_{M} e^{2u}dx.
  \end{equation*}
 \end{itemize}

 \

 \item[(2)]  Passing to a subsequence, $u_n\to -\infty$  uniformly on any compact subset of $M\setminus \Sigma$. Then we have
 \begin{itemize}
 \item[(i)] For $p\in\Sigma$, there holds $m(p)=0$ or $4\pi$.

 \

  \item[(ii)] Denote $$\tilde{\Sigma}_3:=\{p\in\Sigma\ |\ \ p\in \Sigma \ \ with\ \  m(p)=4\pi \}.$$ Then for each $x_i\in \tilde{\Sigma}_3$, among the bubbles $(u^{i,k},\psi^{i,k})$, $k=1,...,L_i$ in Theorem \ref{thm:main-2}, there is only one super-Liouville type bubble (w.l.o.g, denoted by $u^{i,1}$) where the other bubbles are all spinorial Yamabe type bubble.

\

\item[(iii)] Energy identity for the function
 \begin{equation*}
    \lim_{n\rightarrow \infty} \int_{M} e^{2u_n}dM = \sum_{x_i\in \tilde{\Sigma}_3}  \int_{S^2} e^{2u^{i,1}}dx.
  \end{equation*}

 \end{itemize}
\end{itemize}
\end{thm}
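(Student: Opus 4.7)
The plan is to branch on the dichotomy of Theorem~\ref{thm:main-1}(c), which gives exactly the two alternatives of the statement. The hypothesis $\Sigma_3=\emptyset$ is used identically in both cases: at any sequence $x_n\to p\in\Sigma$ one has $u_n(x_n)-2\ln(1+|\psi_n(x_n)|)\to -\infty$, so rescaling about $x_n$ at the scale $\lambda_n$ that normalizes the spinor, via $\tilde u_n(x)=u_n(x_n+\lambda_n x)+\ln\lambda_n$ and $\tilde\psi_n(x)=\lambda_n^{1/2}\psi_n(x_n+\lambda_n x)$, forces $\tilde u_n(0)\to -\infty$. Consequently the first-generation bubble extracted near each $p\in\Sigma$ is always of spinorial Yamabe type, which is the mechanism behind the structural statements (1)(ii) and (2)(ii).

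For Case~(1), both $u_n$ and (by Theorem~\ref{thm:main-1}(1)) $\psi_n$ are uniformly bounded in $L^\infty_{loc}(M\setminus\Sigma)$. Standard $L^p$ and Schauder estimates applied to \eqref{equat:01} yield a subsequence converging in $C^2_{loc}(M\setminus\Sigma)$ to a limit $(u,\psi)$ solving the system on $M\setminus\Sigma$. Integrating the first equation over $B_r(p)$,
\begin{equation*}
\int_{B_r(p)}\bigl(2e^{2u_n}-e^{u_n}|\psi_n|^2\bigr)\,dx=-\int_{\partial B_r(p)}\partial_\nu u_n\, d\sigma+\int_{B_r(p)}K_g\, dM,
\end{equation*}
reduces $m(p)=0$ to a removable-singularity statement for $u$ at $p$, equivalent to the absence of a super-Liouville bubble at $p$. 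If such a bubble formed at scale $\mu_n=e^{-u_n(y_n)}$ with $y_n\to p$, then $\max_{B_{r_0}(p)}u_n\ge u_n(y_n)\to\infty$, and combining the induced lower bound on $\max|\psi_n|$ from the bubble with the $\Sigma_3=\emptyset$ condition would force an additional, strictly larger spinor concentration elsewhere in $B_{r_0}(p)$; the spinor energy accumulated across all such bubbles and necks then exceeds the global bound, contradicting $E(u_n,\psi_n;M)\le C$. This rules out super-Liouville bubbles, giving $m(p)=0$, the $C^2$ extension of $u$ across $\Sigma$, and statement (iii) by dominated convergence.

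For Case~(2), $u_n\to-\infty$ uniformly on compact subsets of $M\setminus\Sigma$ implies $e^{2u_n}\to 0$ in $L^1_{loc}(M\setminus\Sigma)$, so the only contribution to $\int_M e^{2u_n}\,dM$ in the limit lies at points of $\Sigma$. Iterating the bubble decomposition of Theorem~\ref{thm:main-2} at each $p\in\Sigma$, the outermost rescaling delivers a spinorial Yamabe bubble, but deeper scales in the bubble tree may produce a super-Liouville bubble where the rescaled function regains balance with the rescaled spinor. Integrating the first equation of \eqref{equat:10} over $S^2$ and using Gauss-Bonnet yields $\int_{S^2}\bigl(2e^{2u^{i,k}}-e^{u^{i,k}}|\psi^{i,k}|^2\bigr)dx=4\pi$ for any super-Liouville bubble, while the same integral vanishes for a pure spinorial Yamabe bubble. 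This gives the $4\pi$-quantization in (2)(i); uniqueness of the super-Liouville bubble at each $p\in\tilde\Sigma_3$ is then obtained exactly as in Theorem~\ref{thm:main-5}, via Pohozaev and three-circle decay. The identity (2)(iii) follows from summing bubble contributions and the no-neck property for the function part.

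The main obstacle is the exclusion of super-Liouville bubbles in Case~(1) and the uniqueness argument in Case~(2), both of which require tracking the function and the spinor on distinct scales of a mixed bubble tree: the assumption $\Sigma_3=\emptyset$ prevents the function scale from dominating even when a super-Liouville bubble is present, so the standard super-Liouville arguments must be combined with the spinorial Yamabe energy quantization of Theorem~\ref{thm:main-2} and a careful Pohozaev--neck estimate adapted to this two-type bubble geometry.
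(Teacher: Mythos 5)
Your Case~2 argument follows the paper's route: reduce to the proofs of Theorem~\ref{thm:main-4} and Theorem~\ref{thm:main-5} after checking that Lemma~\ref{lem:Green funciton for u_n} still applies; that is fine. Your Case~1 argument, however, has two genuine gaps that do not close.

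First, the exclusion of super-Liouville bubbles by an energy count does not work as stated. You argue that a super-Liouville bubble at $p$ together with $\Sigma_3=\emptyset$ forces a strictly larger spinor concentration, and then that ``the spinor energy accumulated across all such bubbles and necks then exceeds the global bound.'' There is no reason for this: Theorem~\ref{thm:main-2} precisely says that finitely many bubbles with finite total energy can (and do) coexist, and a second spinor concentration at a finer scale $\nu_n\ll\mu_n$ is perfectly compatible with $E(u_n,\psi_n;M)\le C$. No contradiction is reached. The paper's route is instead a Pohozaev argument: since $(u_n,\psi_n)$ is smooth on $M$, $C(u_n,\psi_n)=0$; passing to the limit and using Proposition~\ref{Removability of local singularity} on the limit $(u,\psi)$ (which, being the $C^2_{loc}(M\setminus\Sigma)$-limit of a bounded sequence, has a logarithmic singularity $-\tfrac{\gamma_p}{2\pi}\log|x-p|$ with $\gamma_p<2\pi$) yields $\gamma_p=\gamma_p^2/4\pi$, i.e. $\gamma_p\in\{0,4\pi\}$; the integrability constraint $\gamma_p<2\pi$ then forces $\gamma_p=0$. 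Equivalently, this is exactly the computation carried out in Case~3 of the proof of Lemma~\ref{lem:04}, which the paper explicitly cites. Your ``absence of a super-Liouville bubble $\Leftrightarrow$ removability'' step also silently uses that the neck contributes nothing and that each super-Liouville bubble contributes exactly $4\pi$; these are the contents of Proposition~\ref{prop:01} and the neck estimates in Lemmas~\ref{lem:05}--\ref{lem:06}, not things one gets for free.

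Second, concluding $(1)(iii)$ by ``dominated convergence'' is inadequate: $u_n$ is only bounded in $L^\infty_{loc}(M\setminus\Sigma)$, so there is no uniform bound on $e^{2u_n}$ near $\Sigma$ to dominate with, and $m(p)=0$ only controls the signed quantity $\int(2e^{2u_n}-e^{u_n}|\psi_n|^2)$, not $\int e^{2u_n}$ itself. To rule out mass loss of $e^{2u_n}$ at $p$ one needs the neck analysis in the proof of Lemma~\ref{lem:06}(2) (the oscillation bound and fast-decay estimates of Lemmas~\ref{lem:01}--\ref{lem:02}, combined with the spinor energy identity to convert the signed vanishing into vanishing of each term). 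The paper cites exactly this.
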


\

At the end of this section, we want to remark that super-Liouville equation with a spinorial Yamabe type term is not just a simple promotion of super-Liouville equation. Comparing to Liouville equation or super-Liouville equation, the nonlinear spinorial Yamabe term in \eqref{equat:01} will produce some different blow-up phenomenons which of course makes the relevant neck analysis more complicated, such as:
\begin{itemize}
\item[(1)] For super-Liouville equation, there is only first type singularity, while for super-Liouville equation with a spinorial Yamabe type term, there are two kinds of singularities (see Definition \ref{def:01}). One can see that if we make $u$ disappear, i.e. $u=-\infty$, it reduces to spinorial Yamabe type equation \eqref{equat:02} and then all the singularities are of second type. Different singularities also generate different blow-up values. See Theorem \ref{thm:main-4}.

\

\item[(2)] From the work \cite{Jost-Wang-Zhou,Jost-Wang-Zhou-Zhu-1}, it is now not hard to conclude that for super-Liouville equation, there is just one bubble at each blow-up point which is a super-Liouville type bubble. However, this property will no longer hold for super-Liouville equation with a spinorial Yamabe type term (see Theorem \ref{thm:main-2}), where there may appear finitely many bubbles and meanwhile contain two types.

\

\item[(3)] To prove the energy identity for the function part, i.e. estimating the function's integration $\int_{A_n} e^{2u_n}dx=o(1)$ where $A_n$ is a neck domain, the key idea is to transfer the problem to the estimate of spinor and then we can apply the spinor's energy identity. Precisely, using equation \eqref{equat:01} and integrating by parts, with the help of blow-up values and the classification of super-Liouville type bubble, we first show that $\int_{A_n} (2e^{2u_n}-e^{u_n}|\psi_n|^2)dx=o(1)$. Then we just need to prove   $\int_{A_n} e^{u_n}|\psi_n|^2dx=o(1)$. For this, if there is only one bubble (e.g. for super-Liouville equation), then it is easy to see that its limit is $0$ which is a direct consequence of the energy identity for spinor, i.e. $\int_{A_n} |\psi_n|^4dx=o(1)$. However, here we need to deal with the case of multiple bubbles, which is the main difficulty and contribution of this paper. It needs more detailed and delicated analysis. See Lemma \ref{lem:05} and Lemma \ref{lem:06}.
\end{itemize}

\

The rest of the paper is organized as follows. In Section \ref{sec:Some basic lemmas and Brezis-Merle type compactness}, we will firstly establish some basic lemmas including  small  energy regularity lemma, Pohozaev type identity. Secondly, we will prove a  Brezis-Merle type concentration property, i.e. Theorem \ref{thm:main-1}. In Section \ref{sec:removability of local singularities}, we will provide a necessary and sufficient condition for removing a local singularity. In Section \ref{sec:Bubble's equation}, we will show some classification properties for super-Liouville type bubble, such as the asymptotic behavior near infinity, the bubble's energy and so on. We will prove the energy identity for the spinor in Section \ref{sec:energy-identity-spinor}, i.e. Theorem \ref{thm:main-2}. The proofs of Theorem \ref{thm:main-3}-Thorem \ref{thm:main-6} will be given in Section \ref{sec:Blow-up values and energy identity for the function}.

\

\section{Some basic lemmas and Brezis-Merle type compactness}\label{sec:Some basic lemmas and Brezis-Merle type compactness}

\

In this section,  on one hand, we will  establish some basic lemmas  in the blow-up analysis for super-Liouville equation with a Yamabe term, such as a small energy regularity lemma, Pohozaev type identity and so on.  On the other hand, we will prove a Brezis-Merle type concentration property, i.e. Theorem \ref{thm:main-1}.

\

We start this part by proving following small energy regularity lemma.
\begin{lem}\label{lem:small-energy-regul}
Let $(u_n,\psi_n)$ be a sequence of smooth solutions of
\begin{equation}\label{equat:07}
  \begin{cases}
    -\Delta u_n &= 2e^{2u_n} - e^{u_n} |\psi_n|^2 + H^1_n(x) |\psi_n|^4,\\
    \slashed{D}\psi_n &= -e^{u_n}\psi_n +H^2_n(x)|\psi_n|^2\psi_n,
\end{cases} \ \ in\ \  B_1(0),
\end{equation} where $\sum_{i=1}^2\|H^i_n\|_{L^\infty(B_1(0))}\leq C$.  Then there exists a positive constant $\epsilon_1$, such that if $E(u_n,\psi_n;B_1)\leq\epsilon_1$, then the following alternatives hold:
\begin{itemize}
\item[(1)] $\|u_n^+\|_{L^\infty(B_{\frac{1}{2}})}+\|\psi_n\|_{L^\infty}(B_{\frac{1}{2}})\leq C.$

\

\item[(2)] Either $\|u_n\|_{L^\infty(B_{\frac{1}{2}})}\leq C$ or $u_n$ converges to $-\infty$ uniformly in $ B_{\frac{1}{2}}(0)$.

    \

\item[(3)] If we additionally assume $$osc_{\partial B_1(0)}u_n\leq C,\ \ \|\nabla H^2_n\|_{L^\infty(B_1(0))}\leq C,$$ then we have $$osc_{B_{\frac{1}{2}}(0)}u_n\leq C,\ \  \|\psi_n\|_{L^\infty(B_{\frac{1}{2}}(0))}+\|\nabla\psi_n\|_{L^\infty(B_{\frac{1}{2}}(0))}\leq C\|\psi_n\|_{L^4(B_{1}(0))}.$$
\end{itemize}
\end{lem}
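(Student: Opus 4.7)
My plan is to combine Brezis--Merle theory for $u_n$ with $L^p$ bootstrap for the Dirac operator, exploiting the smallness of $E(u_n,\psi_n;B_1)\le\epsilon_1$ to absorb each critical nonlinearity. This is patterned after the analogous estimates for the super-Liouville equation, with the extra term $H_n^1|\psi_n|^4$ and the new cubic $H_n^2|\psi_n|^2\psi_n$ handled through the $L^4$-criticality of the spinor.

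For Part (1), first observe that by Cauchy--Schwarz $\int_{B_1} e^{u_n}|\psi_n|^2 \le (\int_{B_1}e^{2u_n})^{1/2}(\int_{B_1}|\psi_n|^4)^{1/2}\le\epsilon_1$, so the right-hand side of the equation for $u_n$ is uniformly in $L^1(B_1)$ with norm $O(\epsilon_1)$. Decomposing $u_n=v_n+w_n$ with $v_n$ the Dirichlet solution and $w_n$ the harmonic extension, the Brezis--Merle inequality gives $e^{qv_n}\in L^1(B_1)$ for any prescribed $q$ when $\epsilon_1$ is small; combined with $\int_{B_1}e^{2u_n}\le\epsilon_1$ and the mean value property for $w_n$, this upgrades to an interior $L^q$ bound on $e^{2u_n}$, and Moser iteration on the subsolution inequality $-\Delta u_n\le 2e^{2u_n}+H_n^1|\psi_n|^4$ then yields $u_n^+\in L^\infty(B_{1/2})$. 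For the spinor, I fix a cutoff $\eta\in C^\infty_c(B_1)$ with $\eta\equiv 1$ on $B_{1/2}$ and write $\slashed{D}(\eta\psi_n)=-\eta e^{u_n}\psi_n+\eta H_n^2|\psi_n|^2\psi_n+\nabla\eta\cdot\psi_n$. Since $\|\psi_n\|_{L^4(B_1)}^2\le\epsilon_1^{1/2}$ and $e^{u_n}$ lies in some $L^q$ with $q>1$ from the previous step, elliptic $L^p$ estimates for $\slashed{D}$ paired with the two-dimensional Sobolev embedding allow a finite iteration in which the cubic term is absorbed and the linear term handled, giving $\psi_n\in L^s(B_{1/2})$ for any $s<\infty$ and hence $\psi_n\in L^\infty(B_{1/2})$.

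Part (2) follows from a Harnack-type dichotomy: with $\psi_n$ and $u_n^+$ now locally bounded, $-\Delta u_n$ is bounded above in $L^\infty_{loc}$, and a standard mean-value/Harnack argument yields either $\|u_n\|_{L^\infty(B_{1/2})}\le C$ or $u_n\to-\infty$ uniformly on $B_{1/2}$. For Part (3), the boundary oscillation bound controls $w_n$ on $B_{1/2}$ via the maximum principle; together with $v_n\in L^\infty(B_{1/2})$ (obtained from $-\Delta v_n\in L^q$ with $q>1$), this gives $\mathrm{osc}_{B_{1/2}}u_n\le C$. Then $e^{u_n}$ is pointwise bounded, and one more round of $W^{2,p}$ estimates for $\slashed{D}$, using the additional $C^1$ bound on $H_n^2$, yields the desired estimate $\|\psi_n\|_{L^\infty(B_{1/2})}+\|\nabla\psi_n\|_{L^\infty(B_{1/2})}\le C\|\psi_n\|_{L^4(B_1)}$ by rescaling the iteration above so that each new bound depends linearly on $\|\psi_n\|_{L^4}$.

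The principal obstacle is that both the cross term $e^{u_n}|\psi_n|^2$ in the function equation and the cubic $|\psi|^2\psi$ in the spinor equation are dimensionally critical in two dimensions; the whole argument rests on choosing $\epsilon_1$ small enough that the Brezis--Merle constant for $v_n$ exceeds any prescribed $q$ and that the spinor absorption constant coming from $\|\psi_n\|_{L^4}^2$ is strictly less than one at each bootstrap step. Moreover, the interleaving is delicate: the Brezis--Merle estimate for $v_n$ alone does not yield $e^{u_n}\in L^q$, and one genuinely needs the global $L^1$ bound on $e^{2u_n}$ together with the harmonicity of $w_n$ to propagate exponential integrability from $v_n$ to $u_n$.
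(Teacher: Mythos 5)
Your overall strategy---decomposing $u_n$ into a Dirichlet part controlled by Brezis--Merle and a harmonic remainder controlled by the mean value property, and bootstrapping $\psi_n$ via cutoff Dirac elliptic estimates with absorption of the critical terms through the smallness of $\|e^{u_n}\|_{L^2}$ and $\|\psi_n\|_{L^4}^2$---is the same as the paper's. However, as written there is a genuine ordering gap. You run the Moser iteration for $u_n^+\in L^\infty(B_{1/2})$ on the inequality $-\Delta u_n \le 2e^{2u_n}+H_n^1|\psi_n|^4$ \emph{before} improving the integrability of $\psi_n$, but at that stage $|\psi_n|^4$ is only known to lie in $L^1(B_1)$ with small norm, and $L^1$ right-hand side in two dimensions does not yield an $L^\infty$ bound (the example $u=\log\log(1/|x|)$ shows $-\Delta u\in L^1$ does not prevent a logarithmic singularity). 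One genuinely needs $|\psi_n|^4\in L^q$ for some $q>1$ first. The paper does exactly this: its Step 1 establishes $\|\psi_n\|_{L^{16}(B_{3/4})}\le C\|\psi_n\|_{L^4(B_1)}$ from the Dirac estimate in $W^{1,16/9}$, using only $\|e^{u_n}\|_{L^2}\le\sqrt{\epsilon_1}$ and $\|\psi_n\|_{L^4}^2\le\sqrt{\epsilon_1}$ (no control of $u_n^+$ is required), and only afterwards runs the elliptic estimate $\Delta u_n^1\in L^2(B_{3/4})\Rightarrow u_n^1\in C^0$. Your argument is salvageable by simply swapping these two steps: the cutoff estimate for $\slashed{D}(\eta\psi_n)$ that you sketch already uses only $e^{u_n}\in L^2$, which the hypothesis $\int e^{2u_n}\le\epsilon_1$ provides directly, so move it ahead of the Moser step and feed the resulting $L^q$ control of $|\psi_n|^4$ into the subsolution estimate.

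A secondary point: for the gradient bound in Part (3) you invoke ``$W^{2,p}$ estimates for $\slashed{D}$,'' but $\slashed{D}$ is a first-order operator. What is actually needed is the Schr\"odinger--Lichnerowicz identity $\slashed{D}^2 = -\Delta + \tfrac{1}{2}K_g$, which converts the Dirac equation into a second-order Laplace-type equation for $\psi_n$; the $C^1$ bound on $H_n^2$ then enters precisely in controlling $\nabla\big(H_n^2|\psi_n|^2\psi_n\big)$ on the right-hand side. You should make this reduction explicit, together with the interior $W^{2,4}$ estimate for $u_n-\bar u_n$ that supplies the needed bound on $\nabla u_n$.
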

\begin{proof}
\noindent\textbf{Step 1.} Estimate for $\|\psi_n\|_{L^{16}(B_{\frac{3}{4}})}$.

\

Take a cut-off function $\eta\in C^\infty_0(B_1)$ such that $0\leq\eta\leq 1$, $\eta|_{B_{\frac{3}{4}}}\equiv 1$ and $\|\nabla\eta\|_{L^\infty}\leq C$. From the equation of spinor \eqref{equat:01} and standard elliptic estimates of Dirac operator, for any $1<p<2$, we have
\begin{align*}
  \|\eta\psi_n\|_{W^{1,p}(B_{1})}&\leq C(\|\slashed{D}(\eta\psi_n)\|_{L^p(B_1)}+\|\eta\psi_n\|_{L^4(B_1)})\\
  &\leq C(\|e^{u_n}\eta\psi_n\|_{L^p(B_1)}+\|\ |\psi_n|^2|\eta\psi_n|\ \|_{L^p(B_1)}+\||\nabla\eta||\psi_n|\|_{L^p(B_1)}+\|\psi_n\|_{L^4(B_1)})\\
  &\leq C(\|e^{u_n}\|_{L^2(B_1)}+\|\psi_n \|^2_{L^4(B_1)})\|\eta\psi_n\|_{L^{\frac{2p}{2-p}}(B_1)}+C\|\psi_n\|_{L^4(B_1)}\\
  &\leq C\sqrt{\epsilon_1}\|\eta\psi_n\|_{W^{1,p}(B_{1})}+C\|\psi_n\|_{L^4(B_1)},
\end{align*} where we used Sobolev embedding theory and Young's inequality. Now, take $p=\frac{16}{9}$ and $\epsilon_1$ small enough, we get $\|\eta\psi_n\|_{W^{1,p}(B_{1})}\leq  C\|\psi_n\|_{L^4(B_1)},$ which implies $\|\psi_n\|_{L^{16}(B_{\frac{3}{4}})}\leq C\|\psi_n\|_{L^4(B_1)}.$

\

\noindent\textbf{Step 2.} Estimate for $\|u^+_n\|_{L^\infty(B_{\frac{1}{2}})}$ and $\|\psi_n\|_{L^\infty(B_{\frac{1}{2}})}$ .

\

Let $u_n^1$ be the solution of the following Dirichlet problem
\begin{align*}
\begin{cases}
  -\Delta u_n^1=2e^{2u_n}-e^{u_n}|\psi_n|^2+H_n^1(x)|\psi_n|^4,\ \ &in\ \ B_1,\\
  u^1_n=0,\ \ &on \ \ \partial B_1.
  \end{cases}
\end{align*}

Since $\|\Delta u_n^1\|_{L^1(B_1)}\leq C\epsilon_1,$ taking $\epsilon_1$ small, by Theorem 1 in \cite{Brezis}, we have $\int_{B_1}e^{8|u_n^1|}dx\leq C.$

Now, let $u_n^2=u_n-u_n^1$. Then it is easy to see that $u_n^2$ is a harmonic function in $B_1$. By mean value property, we have $$\|(u_n^2)^+\|_{L^\infty(B_{\frac{3}{4}})}\leq C\|(u_n^2)^+\|_{L^1(B_1)}\leq C(\|u_n^+\|_{L^1(B_1)}+\|u_n^1\|_{L^1(B_1)}) \leq C,$$ where we used the fact that $\|u_n^+\|_{L^1(B_1)}\leq \|e^{u_n}\|_{L^1(B_1)}.$

Then it is easy to see that $\|\Delta u_n^1\|_{L^2(B_{\frac{3}{4}})}\leq C$ which implies $\|u_n^1\|_{C^{0}(B_{\frac{3}{4}})}\leq C$. Thus, we arrived at $$\|u_n^+\|_{L^\infty(B_{\frac{3}{4}})}\leq C.$$

 Combining this with the fact $\|\psi_n\|_{L^{16}(B_{\frac{3}{4}})}\leq C$, by the standard elliptic estimates of Dirac operator and Sobolev embedding, we get $$\|\psi_n\|_{W^{1,4}(B_{\frac{5}{8}})}\leq C(\|\slashed{D}\psi_n\|_{L^{4}(B_{\frac{3}{4}})}+\|\psi_n\|_{L^4(B_{\frac{3}{4}})})\leq C,$$ and $\|\psi_n\|_{C^0(B_{\frac{5}{8}})}\leq C.$

\

\noindent\textbf{Step 3.} Conclusions of the lemma.

\

Since $\Delta (u_n-u^1_n)=0$ in $B_1(0)$ and $u_n-u^1_n$ is bounded from above in $B_{\frac{3}{4}}(0)$, then the second conclusion follows from Harnack's inequality.

For the third conclusion, since $\Delta (u_n-u^1_n)=0$ in $B_1(0)$ and $osc_{\partial B}(u_n-u^1_n)\leq C$, by maximal principle property, we know $osc_{B_{\frac{3}{4}}(0)}u_n\leq C.$

To estimate $\|\nabla \psi_n\|_{L^\infty(B_{\frac{1}{2}}(0))}$, by above estimates, we now have that $$\|\Delta (u_n(x)-\overline{u}_n)\|_{L^\infty(B_{\frac{5}{8}}(0))}\leq C,\ \ i=1,2,$$ where $\overline{u}_n:=\frac{1}{|B_{\frac{5}{8}}(0)|}\int_{B_{\frac{5}{8}}(0)}u_n(x)dx$ . The standard elliptic theory and embedding theory yields that
\begin{align*}
\|\nabla u_n(x)\|_{L^{\infty}(B_{\frac{9}{16}}(0))}&\leq\|u_n(x)-\overline{u}_n\|_{W^{2,4}(B_{\frac{9}{16}}(0))}\\ &\leq C\left(\|\Delta (u_n(x)-\overline{u}_n)\|_{L^4(B_{\frac{5}{8}}(0))}+ \|u_n(x)-\overline{u}_n\|_{L^4(B_{\frac{5}{8}}(0))}\right)\\
&\leq C\left(\|\Delta (u_n(x)-\overline{u}_n)\|_{L^\infty(B_{\frac{5}{8}}(0))}+ osc_{B_{\frac{5}{8}}(0)}u_n(x)\right)\leq C.
\end{align*}
Using the Schr\"{o}dinger-Lichnerowicz formula \cite{lawson1989spin}, we have
$$-\Delta\psi_n=\slashed{D}^2\psi_n=-\slashed{D}\left(e^{u_n}\psi_n+H^2_n(x)e^{u_n}\psi_n\right).$$ Then $$|\Delta\psi_n(x)|\leq C\left(|\psi_n(x)| +|\nabla\psi_n(x)|\right),\ \ \forall\ x\in B_{\frac{9}{16}}(0)$$ and the elliptic estimate tells us that $$\|\psi_n\|_{W^{2,4}(B_{\frac{1}{2}}(0))}\leq C(\|\psi_n\|_{L^{4}(B_{\frac{5}{8}}(0))}+\|\nabla\psi_n\|_{L^{4}(B_{\frac{5}{8}}(0))})\leq C\|\psi_n\|_{L^{4}(B_{\frac{3}{4}}(0))},$$ which immediately implies $\|\nabla\psi_n\|_{L^\infty(B_{\frac{1}{2}}(0))}\leq C\|\psi_n\|_{L^{4}(B_{\frac{3}{4}}(0))}$.

\end{proof}

\

With the help of Lemma \ref{lem:small-energy-regul}, we can prove the following Brezis-Merle type compactness.

\begin{proof}[\textbf{Proof of Theorem \ref{thm:main-1}:}]

We firstly see that $\psi_n$ is bounded in $L^\infty_{loc}(\M\setminus \Sigma)$ by the definition of $\Sigma$. Define $$S:=\{x\in M|\ \liminf_{n\to\infty}E(u_n,\psi_n;B_r(x))\geq\epsilon_1,\  \ \forall r>0\},$$ where $\epsilon_1$ is the constant in Lemma \ref{lem:small-energy-regul}. Since the total energy is bounded, it is easy to see that $S$ is a finite point set. We divide the proof into two steps.

\

\noindent\textbf{Step 1.} $\Sigma=S$.

\

On the one hand,  we prove $\Sigma\subset S$, i.e. for any $x_0\in \Sigma$, there holds $x_0\in S$. In fact, if not, then there exists a constant $r_0>0$, such that passing to a subsequence, there holds
$$\lim_{n\to\infty}E(u_n,\psi_n;B_{r_0}(x_0))<\epsilon_1.$$ By Lemma \ref{lem:small-energy-regul}, we have $$\|u_n^+\|_{L^\infty(B_{\frac{r_0}{2}}(x_0))}+\|\psi_n\|_{L^\infty(B_{\frac{r_0}{2}}(x_0))}\leq C,$$ which contradicts to $x_0\in \Sigma$.

On the other hand, we show $S\subset \Sigma $, i.e. for any $x_0\in S$, it holds $x_0\in \Sigma$. We first claim that for any $r>0$, $$\lim_{n\to\infty}(\|u_n^+\|_{L^\infty(B_r(x_0))}+\|\psi_n\|_{L^\infty(B_r(x_0))})=+\infty.$$ If not, then there exists $r_0>0$ such that $$\sup_n(\|u_n^+\|_{L^\infty(B_{r_0}(x_0))}+\|\psi_n\|_{L^\infty(B_{r_0}(x_0))})\leq C.$$ Now, we can choose $r_1<r_0$ such that $\sup_nE(u_n,\psi_n;B_{r_1}(x_0))<\epsilon_1$ which is a contradiction to $x_0\in S$ and the claim follows. Since $S$ is finite, we may choose $r$ small such that $B_{2r}(x_0)\cap \Sigma=\{x_0\}$.  Let $x_n\in B_r(x_0)$ be such that $$u_n^+(x_n)+|\psi_n(x_n)|=\sup_{x\in B_r(x_0)}(u_n^+(x)+|\psi_n|(x)).$$ We claim that $x_n\to x_0$. If not, then there exists a point $\overline{x}\neq x_0$ such that $x_n\to \overline{x}\in B_{2r}(x_0)$, which means $\overline{x}\in \Sigma$. Since $\Sigma\subset S$, then $\overline{x}\in S$ which implies $B_{2r}(x_0)\cap \Sigma=\{x_0,\overline{x}\}$. This is a contradiction.

\

\noindent\textbf{Step 2.} Conclusion $(2)$.

\

By Lemma \ref{lem:small-energy-regul}, it is easy to see that the case $S=\emptyset$ implies the conclusions $(a)$, $(b)$ and the case $S\neq \emptyset$ implies the conclusion $(c)$.
\end{proof}

\

Next we prove the following Pohozaev type identity which is very useful in our later analysis.

\begin{lem}\label{Pohozaev for Liouville}
  Let $(u,\psi)$ be a smooth solution to \eqref{equat:01},then for each $B_r \subset M$ we have:
   \begin{align*}
      R\int _{\partial B_R} \left(\left|
      \frac{\partial u}{\partial \nu}
    \right|^2 - \frac{1}{2}|\nabla u|^2\right)d\theta
    &= \int _{B_R} \left(2e^{2u} - e^u|\psi|^2 -|\psi|^4 x\cdot \nabla F\right)dx
    -R\int_{\partial B_R}\left( e^{2u} + F|\psi|^4\right)\\
     &\quad +\frac{1}{2}\int_{\partial B_R} \left(\left<
      x\cdot \psi,\frac{\partial \psi}{\partial \nu}
    \right>+\left<
      \frac{\partial \psi}{\partial \nu},x\cdot \psi
    \right>\right)-\int_{B_R} K_g x\cdot \nabla udx.
    \end{align*}
\end{lem}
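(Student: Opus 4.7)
The identity couples the scalar and spinor equations of \eqref{equat:01}, so the strategy is to derive two companion identities and combine them. The scalar side is a routine Pohozaev computation: multiplying the first equation by $x\cdot\nabla u$, integrating over $B_R$, and using the classical identity
\begin{equation*}
\int_{B_R}(-\Delta u)(x\cdot\nabla u)\,dx = \frac{R}{2}\int_{\partial B_R}|\nabla u|^2\,d\sigma - R\int_{\partial B_R}|\partial_\nu u|^2\,d\sigma
\end{equation*}
(which comes from $\nabla u\cdot\nabla(x\cdot\nabla u)=|\nabla u|^2+\frac{1}{2}x\cdot\nabla|\nabla u|^2$ and integration by parts), together with $-2e^{2u}(x\cdot\nabla u)=-x\cdot\nabla e^{2u}$ and one further integration by parts, gives
\begin{equation*}
R\int_{\partial B_R}\bigl(|\partial_\nu u|^2-\tfrac{1}{2}|\nabla u|^2\bigr)d\sigma = 2\int_{B_R}e^{2u}dx - R\int_{\partial B_R}e^{2u}d\sigma + \int_{B_R}e^u|\psi|^2(x\cdot\nabla u)dx + \int_{B_R}K_g(x\cdot\nabla u)dx.
\end{equation*}
This leaves the mixed coupling term $\int_{B_R}e^u|\psi|^2(x\cdot\nabla u)dx$ to be evaluated using the spinor equation.

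\textbf{Spinor divergence identity.} The new ingredient is to consider the vector field $V$ with components $V_j=\mathrm{Re}\langle x\cdot\psi,\partial_j\psi\rangle$. By the divergence theorem,
\begin{equation*}
\int_{B_R}\mathrm{div}\,V\,dx = \int_{\partial B_R}\mathrm{Re}\langle x\cdot\psi,\partial_\nu\psi\rangle\,d\sigma = \frac{1}{2}\int_{\partial B_R}\bigl(\langle x\cdot\psi,\partial_\nu\psi\rangle+\langle\partial_\nu\psi,x\cdot\psi\rangle\bigr)d\sigma,
\end{equation*}
which is exactly the spinorial boundary term in the target identity. To compute $\mathrm{div}\,V$ pointwise I would use $\partial_j(x\cdot\psi)=e_j\cdot\psi+x\cdot\partial_j\psi$, the skew-adjointness $\mathrm{Re}\langle X\cdot\phi,\phi\rangle=0$ (which annihilates $\mathrm{Re}\langle x\cdot\partial_j\psi,\partial_j\psi\rangle$), and $\sum_j\langle e_j\cdot\psi,\partial_j\psi\rangle=-\langle\psi,\slashed{D}\psi\rangle$, arriving at
\begin{equation*}
\mathrm{div}\,V = -\mathrm{Re}\langle\psi,\slashed{D}\psi\rangle + \mathrm{Re}\langle x\cdot\psi,\Delta\psi\rangle.
\end{equation*}

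\textbf{Evaluation and combination.} The second equation of \eqref{equat:01} immediately gives $-\mathrm{Re}\langle\psi,\slashed{D}\psi\rangle = e^u|\psi|^2 + 2F|\psi|^4$. For the other term, Lichnerowicz in flat coordinates yields $\Delta\psi=-\slashed{D}^2\psi$; applying $\slashed{D}$ to $\slashed{D}\psi=-e^u\psi-2F|\psi|^2\psi$ via the Leibniz rule $\slashed{D}(f\varphi)=\nabla f\cdot\varphi+f\slashed{D}\varphi$ and substituting the equation back expresses $\slashed{D}^2\psi$ as a linear combination of $(\nabla u)\cdot\psi$, $(\nabla F)\cdot\psi$, $(\nabla|\psi|^2)\cdot\psi$, and scalar multiples of $\psi$. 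Pairing against $x\cdot\psi$ and invoking the two Clifford identities $\mathrm{Re}\langle\psi,X\cdot\psi\rangle=0$ and $\mathrm{Re}\langle X\cdot\psi,Y\cdot\psi\rangle=\langle X,Y\rangle|\psi|^2$ collapses everything to
\begin{equation*}
\mathrm{Re}\langle x\cdot\psi,\Delta\psi\rangle = e^u(x\cdot\nabla u)|\psi|^2 + 2(x\cdot\nabla F)|\psi|^4 + F\,(x\cdot\nabla|\psi|^4).
\end{equation*}
A final integration by parts on $\int_{B_R}F(x\cdot\nabla|\psi|^4)dx$ produces the boundary term $R\int_{\partial B_R}F|\psi|^4\,d\sigma$ and bulk contributions that combine cleanly with $2F|\psi|^4$ and $2(x\cdot\nabla F)|\psi|^4$; solving the resulting relation for $\int_{B_R}e^u|\psi|^2(x\cdot\nabla u)dx$ and substituting into the scalar Pohozaev yields the stated formula. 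The main obstacle is the bookkeeping in $\slashed{D}^2\psi$: the quartic nonlinearity produces cross terms of the form $e^u F|\psi|^2\psi$ and $F^2|\psi|^4\psi$ which must be verified to pair to zero against $x\cdot\psi$ (via $\mathrm{Re}\langle x\cdot\psi,\psi\rangle=0$) in order for the final expression to simplify to the stated one.
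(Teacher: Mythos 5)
Your approach is essentially the same as the paper's. Both proofs multiply the scalar equation by $x\cdot\nabla u$ to obtain the scalar Pohozaev identity with a mixed coupling term, then eliminate that term by combining the Schr\"odinger--Lichnerowicz formula with a divergence-theorem computation on the spinor quantity $\langle x\cdot\psi,\nabla\psi\rangle$. Your vector field $V_j=\mathrm{Re}\langle x\cdot\psi,\partial_j\psi\rangle$ packages compactly what the paper carries out explicitly in two steps (its displayed equations computing $\int_{B_R}\langle\Delta\psi,x\cdot\psi\rangle+\langle x\cdot\psi,\Delta\psi\rangle$ first via Lichnerowicz, then via integration by parts), and the two Clifford identities you invoke---$\mathrm{Re}\langle\psi,X\cdot\psi\rangle=0$ and $\mathrm{Re}\langle X\cdot\psi,Y\cdot\psi\rangle=\langle X,Y\rangle|\psi|^2$---are exactly the mechanisms the paper uses to kill the scalar multiples of $\psi$ (including the curvature term in Lichnerowicz, which is why your flat simplification $\Delta\psi=-\slashed{D}^2\psi$ causes no harm here). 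One small bookkeeping note: a careful run through your scalar Pohozaev step yields $+\int_{B_R}K_g\,(x\cdot\nabla u)\,dx$, whereas the lemma as stated carries a minus sign on that term; this appears to be a sign typo in the statement (it propagates from the paper's own display but is inconsequential since $K_g$ drops out in all subsequent local and bubble-level applications), so do not second-guess your own computation on this point.
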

\begin{proof}
  We choose a local orthonormal basis $e_1,e_2$ on $M$ such that $\nabla _{e_\beta} e_{\alpha} = 0$ at a considered point.
  Denote that $x = x_1 e_1 + x_2 e_2$. Multiplying the first equation by $x\cdot \nabla u$ and integrating over $B_R$, we have
  \begin{equation*}
    -\int _{B_R}\Delta u x\cdot\nabla u dx= \int_{B_R}2e^{2u}x\cdot\nabla u dx- \int_{B_R} e^u |\psi|^2 x\cdot\nabla u   dx -\int_{B_R} K_g x\cdot\nabla u dx.
  \end{equation*}
  Integrating by parts, we get
  \begin{align}\label{multiply x nabla u}
    R\int_{\partial B_R} \left(\left|
      \frac{\partial u}{\partial \nu}
    \right|^2- \frac{1}{2}|\nabla u|^2\right)&= -R\int_{\partial B_R}e^{2u} + \int_{B_R} 2e^{2u}dx +
    R\int_{\partial B_R}e^{u}|\psi|^2 \notag\\
    &\quad - 2\int_{B_R} e^{u}|\psi|^2dx - \int_{B_R}e^u x\cdot \nabla(|\psi|^2)dx
    -\int_{B_R} K_g x\cdot \nabla u dx.
  \end{align}

  Using the Schrodinger-Lichnerowicz formula $\slashed{D}^2 = -\Delta + \frac{1}{2}K_g$, we have
  \begin{equation*}
    \Delta \psi = \nabla_{e_\alpha}(e^u + 2F |\psi|^2)e_{\alpha}\cdot \psi - (e^u + 2F |\psi|^2)^2\psi + \frac{1}{2}K_g \psi.
  \end{equation*}

Since $$\langle \psi, e_\alpha\cdot\psi \rangle+\langle e_\alpha\cdot\psi, \psi \rangle=0,\ \ \alpha=1,2,$$ we get
  \begin{align}\label{two laplace of psi_1}
     &\int_{B_R} \left\langle
      \Delta\psi,x\cdot\psi
      \right\rangle
      +
      \int_{B_R} \left\langle
      x\cdot\psi,\Delta\psi
      \right\rangle dx\notag\\
      &=\int_{B_R}\sum_{\alpha,\beta}
      \left(
      \nabla_{e_{\alpha}}(e^u + 2F |\psi|^2)
      \right)
      \left(
      \left\langle
        e_{\alpha}\cdot \psi,e_{\beta}\cdot \psi
      \right\rangle
      +
      \left\langle
        e_{\beta}\cdot \psi,e_{\alpha}\cdot \psi
      \right\rangle
      \right)x_{\beta}dx\notag\\
      &=\int_{B_R} 2\sum_{\alpha}
      \left(
        \nabla_{e_{\alpha}}(e^u + 2F |\psi|^2)
      \right)|\psi|^2 x_{\alpha}dx\notag\\
      &=2\int_{B_R}\left(
        x \cdot \nabla (e^u + 2F |\psi|^2)
      \right)|\psi|^2dx\notag\\
      &=-2\int_{B_R}(e^u + 2F|\psi|^2)(x\cdot \nabla|\psi|^2)dx
      +2R\int_{\partial B_R}(e^u + 2F|\psi|^2)|\psi|^2
      - 4\int_{B_R}(e^u + 2F|\psi|^2)|\psi|^2dx.
  \end{align}

Integrating by parts, we obtain
     \begin{align*}
       \int_{B_R} \left\langle
      \Delta\psi,x\cdot\psi
      \right\rangle dx &=\int_{B_R}div \left\langle
        \nabla \psi,x\cdot\psi
      \right\rangle dx
      - \int_{B_R}\sum_{\alpha}\left\langle
        \nabla_{e_\alpha}\psi,e_\alpha \cdot\psi
      \right\rangle dx
      - \int_{B_R} \left\langle
        \nabla \psi,x\cdot \nabla\psi
      \right \rangle dx\\
      &=\int_{\partial B_R}\left\langle
        \frac{\partial \psi}{\partial \nu},x\cdot \psi
      \right\rangle
      + \int_{B_R}\left\langle
        \slashed{D}\psi,\psi
      \right\rangle dx
      - \int_{B_R}\left\langle
        \nabla \psi,x\cdot \nabla\psi
      \right\rangle dx\\
      &=\int_{\partial B_R}\left\langle
        \frac{\partial \psi}{\partial \nu},x\cdot \psi
      \right\rangle
      - \int_{B_R} (e^u + 2F|\psi|^2)|\psi|^2 dx
      - \int_{B_R}\left\langle
        \nabla \psi,x\cdot \nabla\psi
      \right\rangle dx,
     \end{align*}
     and
     \begin{equation*}
      \int_{B_R} \left\langle
      x\cdot\psi,\Delta\psi
      \right\rangle dx=\int_{\partial B_R}\left\langle
        x\cdot \psi,\frac{\partial \psi}{\partial \nu}
      \right\rangle
      - \int_{B_R} (e^u + 2F|\psi|^2)|\psi|^2 dx
      - \int_{B_R}\left\langle
        x\cdot \nabla\psi,\nabla \psi
      \right\rangle dx.
     \end{equation*}
     Adding above two equalities, there holds
     \begin{equation}\label{two laplace of psi_2}
      \int_{B_R} \left(\left\langle
      \Delta\psi,x\cdot\psi
      \right\rangle +
    \left\langle
      x\cdot\psi,\Delta\psi
      \right\rangle \right)dx
      =\int_{\partial B_R}\left\langle
        \frac{\partial \psi}{\partial \nu},x\cdot \psi
      \right\rangle
      +\left\langle
        x\cdot \psi,\frac{\partial \psi}{\partial \nu}
      \right\rangle
    -2\int_{B_R} (e^u + 2F|\psi|^2)|\psi|^2dx.
     \end{equation}
     From \eqref{two laplace of psi_1} and \eqref{two laplace of psi_2}, we have
       \begin{align}\label{equat:16}
      &-\int_{B_R} (e^u + 2F|\psi|^2)(x\cdot \nabla |\psi|^2) dx\notag\\
      &=\int_{B_R} (e^u + 2F|\psi|^2)|\psi|^2dx - R\int_{\partial B_R}(e^u + 2F|\psi|^2)|\psi|^2 + \frac{1}{2} \int_{\partial B_R}\left\langle
        \frac{\partial \psi}{\partial \nu},x\cdot \psi
      \right\rangle
      +\left\langle
        x\cdot \psi,\frac{\partial \psi}{\partial \nu}
      \right\rangle.
 \end{align}

Noting that
  \begin{align*}
    \int_{B_R} 2F |\psi|^2 x\cdot \nabla |\psi|^2 dx
    &=\int_{B_R} x\cdot \nabla (F|\psi|^4)dx
    - \int_{B_R} |\psi|^4 x\cdot \nabla F dx\\
    &= R\int_{\partial B_R} F|\psi|^4 - 2\int_{B_R} F|\psi|^4dx
    - \int_{B_R} |\psi|^4 x\cdot \nabla Fdx,
  \end{align*}
     then taking \eqref{equat:16} into \eqref{multiply x nabla u}, we obtain the conclusion of the lemma.
\end{proof}

\

At the end of this section, we give two lemmas about the estimates of spinor.

\begin{lem}\label{small_grad}
If $(v,\phi)$ is a smooth solution to
\begin{equation}\label{equat:06}
  \begin{cases}
    -\Delta v = 2e^{2v} - e^v |\phi|^2 + H_1(x) |\phi|^4,\\
    \slashed{D}\phi = -e^v\phi +H_2(x)|\phi|^2\phi,
\end{cases} \ \ in\ \  B_1(0)\setminus \{0\}
\end{equation}
  with
  \begin{equation*}
    E(v,\phi;B_1(0))\leq C,\ \ \sum_{i=1}^2\|H_i\|_{L^\infty(B_1(0))}\leq C,
  \end{equation*}
  then we have $    \norm{\nabla \phi}_{L^{4/3}(B_{1/2}(0))}\le C\norm{\phi}_{L^4(B_1(0))}.$
\end{lem}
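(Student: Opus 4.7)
The strategy is to obtain the bound from the standard $L^p$ elliptic regularity estimate for the Dirac operator applied to $\eta\phi$, where $\eta\in C_0^\infty(B_{3/4})$ is a cut-off with $\eta\equiv 1$ on $B_{1/2}$ and $|\nabla\eta|\le C$. Using the product rule for Clifford multiplication and the spinor equation,
\[
\slashed{D}(\eta\phi)=\nabla\eta\cdot\phi+\eta\slashed{D}\phi=\nabla\eta\cdot\phi-\eta e^v\phi+\eta H_2|\phi|^2\phi,
\]
so the elliptic estimate for $\slashed{D}$ in $L^{4/3}$ gives
\[
\|\eta\phi\|_{W^{1,4/3}(B_{3/4})}\le C\bigl(\|\nabla\eta\cdot\phi\|_{L^{4/3}}+\|\eta e^v\phi\|_{L^{4/3}}+\|\eta H_2|\phi|^2\phi\|_{L^{4/3}}+\|\eta\phi\|_{L^{4/3}}\bigr).
\]
Since $\|\nabla\phi\|_{L^{4/3}(B_{1/2})}$ is controlled by the left-hand side, it suffices to bound each term on the right by $C\|\phi\|_{L^4(B_1)}$.

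Each term is handled by H\"older. The cut-off term and the $\|\eta\phi\|_{L^{4/3}}$ term are immediately bounded by $C\|\phi\|_{L^4(B_1)}$ using $|B_1|<\infty$. For the exponential term, H\"older with $\tfrac12+\tfrac14=\tfrac34$ yields
\[
\|\eta e^v\phi\|_{L^{4/3}(B_1)}\le\|e^v\|_{L^2(B_1)}\|\phi\|_{L^4(B_1)}\le\Bigl(\int_{B_1}e^{2v}\Bigr)^{\!1/2}\|\phi\|_{L^4(B_1)}\le C\|\phi\|_{L^4(B_1)},
\]
where the energy bound $E(v,\phi;B_1)\le C$ controls $\|e^v\|_{L^2(B_1)}$. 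For the cubic spinor term, the pointwise identity $\|\,|\phi|^2\phi\,\|_{L^{4/3}}^{4/3}=\int|\phi|^4$ gives $\|\,|\phi|^2\phi\,\|_{L^{4/3}}=\|\phi\|_{L^4}^3$, so
\[
\|\eta H_2|\phi|^2\phi\|_{L^{4/3}}\le\|H_2\|_\infty\|\phi\|_{L^4(B_1)}^3\le C\|\phi\|_{L^4(B_1)},
\]
where the last step uses $\|\phi\|_{L^4(B_1)}^2\le E(v,\phi;B_1)^{1/2}\le C$ to absorb two of the three powers. This is the one place where the superlinear nonlinearity is tamed by the \emph{a priori} finite energy.

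The main technical point to address is that the equation is only assumed to hold on the punctured ball $B_1(0)\setminus\{0\}$, whereas $\eta\phi$ is supported across the origin. I would verify that the distributional identity $\slashed{D}\phi=-e^v\phi+H_2|\phi|^2\phi$ extends to all of $B_1$. Given any test spinor $\varphi\in C_0^\infty(B_1)$, pair against $\chi_\eps\varphi$, where $\chi_\eps$ is a radial cut-off vanishing on $B_\eps$ and equal to $1$ outside $B_{2\eps}$; the error introduced by the cut-off is
\[
\Bigl|\int\phi\cdot\nabla\chi_\eps\cdot\varphi\Bigr|\le\|\phi\|_{L^4(\{\eps\le|x|\le 2\eps\})}\|\nabla\chi_\eps\|_{L^{4/3}(B_1)}\|\varphi\|_{L^\infty},
\]
and a direct computation gives $\|\nabla\chi_\eps\|_{L^{4/3}}\le C\eps^{1/2}$, while $\|\phi\|_{L^4(\{\eps\le|x|\le 2\eps\})}\to 0$ by absolute continuity of the integral. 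Letting $\eps\to 0$, the point singularity at the origin is removable in this distributional sense, so the elliptic estimate above is valid on $B_1$ and the lemma follows.
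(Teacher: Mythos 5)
Your proof is correct, and it takes a genuinely different route from the paper's. The paper cuts off directly at the puncture: it introduces $\eta_\epsilon\in C_0^\infty(B_{2\epsilon}(0))$ with $\eta_\epsilon\equiv 1$ on $B_\epsilon(0)$ and $|\nabla\eta_\epsilon|\le C/\epsilon$, applies the interior $L^{4/3}$ elliptic estimate to $(1-\eta_\epsilon)\phi$ on $B_1$ (which is smooth since it vanishes near the origin), shows the cut-off error satisfies
\[
\norm{\nabla\eta_\epsilon\cdot\phi}_{L^{4/3}(B_1)}\le \frac{C}{\epsilon}\Bigl(\int_{B_{2\epsilon}\setminus B_\epsilon}|\phi|^{4/3}\Bigr)^{3/4}\le C\norm{\phi}_{L^4(B_1)}
\]
uniformly in $\epsilon$ (H\"older against the annulus measure gives the needed factor of $\epsilon$), and then passes to the limit $\epsilon\to 0$ by Fatou. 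You instead split the argument into two independent steps: first you prove that the distributional Dirac equation extends across the puncture (your $\chi_\epsilon$ computation, with $\|\nabla\chi_\epsilon\|_{L^{4/3}}\le C\epsilon^{1/2}$), and only then do you apply the elliptic estimate to a compactly supported $\eta\phi$ with a single fixed $\eta$ near $\partial B_{3/4}$. Both arguments reduce to the same H\"older estimates of the nonlinear terms using the energy bound ($\|e^v\|_{L^2}\le C$ and $\|\phi\|_{L^4}^2\le C$ taming the cubic spinor term), and both hinge on the cut-off error near the origin decaying fast enough in $L^{4/3}$ due to the smallness of the annular measure. Your version is more modular: the removability of the puncture as a distributional statement is isolated into a lemma that could be reused elsewhere, whereas the paper's argument is packaged into a single limit and does not separately assert that $\phi$ is a weak solution across the origin. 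The paper's version is slightly more economical in that it never needs to invoke elliptic regularity for merely distributional solutions; yours does need the standard fact that $\psi\in L^{4/3}$ with $\slashed{D}\psi\in L^{4/3}$ distributionally implies $\psi\in W^{1,4/3}_{loc}$, which you apply implicitly — this is true but worth stating, since $\eta\phi$ is not a priori in $W^{1,4/3}$ near the origin when the elliptic estimate is first invoked.
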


\begin{proof}
  We choose a cut-off function $\eta_\epsilon\in C_0^\infty (B_{2\epsilon}(0))$ such that
    $\eta_\epsilon \equiv 1$ in $B_\epsilon(0)$ and $|\nabla \eta_\epsilon|<\frac{C}{\epsilon}$.
Then
\begin{equation*}
  \slashed{D}(1-\eta_\epsilon)\phi = (1-\eta_\epsilon)(-e^v\phi + H_2|\phi|^2\phi)- \nabla\eta_\epsilon \cdot \phi.
\end{equation*}
From the interior elliptic estimate of Dirac operator:
\begin{equation*}
  \begin{aligned}
  \norm{(1-\eta_\epsilon)\phi}_{W^{1,4/3}(B_{1/2})}&\le
  C \norm{(1-\eta_\epsilon)(e^v + H_2|\phi|^2) \phi}_{L^{4/3}(B_1)}
  +C\norm{\nabla \eta_\epsilon \cdot \phi}_{L^{4/3}(B_1)}
  +C\norm{(1-\eta_\epsilon)\phi}_{L^{4/3}(B_1)}\\
  &\le C\norm{(e^v + |\phi|^2)}_{L^2(B_1)}\norm{\phi}_{L^4(B_1)}
  +C\norm{\nabla \eta_\epsilon \cdot \phi}_{L^{4/3}(B_1)}
  +C\norm{\phi}_{L^{4/3}(B_1)}.
  \end{aligned}
\end{equation*}
Noting that
\begin{equation*}
  \begin{aligned}
  \norm{\nabla\eta_\epsilon \cdot \phi}_{L^{4/3}(B_1)}\le \frac{C}{\epsilon}\left(
    \int _{B_{2\epsilon}(0)\setminus B_{\epsilon}(0)} |\phi|^{4/3}dx
  \right)^{3/4}\le C\norm{\phi}_{L^4(B_1)},
  \end{aligned}
\end{equation*}
thus
\begin{equation*}
  \begin{aligned}
  \norm{\nabla \phi}_{L^{4/3}(B_{1/2})}&\le \lim_{\epsilon\rightarrow 0} \norm{(1-\eta_\epsilon)\phi}_{W^{1,4/3}(B_{1/2})}\\
  &\le C\norm{\phi}_{L^4(B_1)} + C\norm{(e^v + |\phi|^2)}_{L^2(B_1)}\norm{\phi}_{L^4(B_1)}\le C\norm{\phi}_{L^4(B_1)}.
  \end{aligned}
\end{equation*}
\end{proof}

\begin{lem}
 There exists a small constant $\epsilon_2>0$, such that if $(v,\phi)$ is a smooth solution to \eqref{equat:06} with
  \begin{equation*}
    E(v,\phi;B_2) = \int _{B_2} \left(e^{2v} + |\phi|^4\right)dx <\epsilon_2,\ \ \sum_{i=1}^2\|H_i\|_{L^\infty(B_2(0))}+\|\nabla H_2\|_{L^\infty(B_2(0))}\leq C
  \end{equation*} and  $$osc_{B_{\frac{1}{2}|y|}(y)}v\leq C,\ \ \forall\  \ y\in B_1(0),$$
  then for any $x\in B_{1/2}(0)$, we have
  \begin{equation}\label{estimate_phi_1}
    |\phi(x)||x|^{1/2} + |\nabla \phi(x)||x|^{3/2} \le C\left(
      \int _{B_{2|x|}(0)} |\phi(y)|^4dy
    \right)^{1/4}.
  \end{equation}
  Furthermore, if we assume that $v = -(1-\epsilon)\ln |x|+O(1)$ for some $\epsilon\in (0,1)$, then for any $x\in B_{1/2}$,
  we have
  \begin{equation}\label{estimate_phi_2}
    |\phi(x)||x|^{1/2} + |\nabla \phi(x)||x|^{3/2} \le C\left(
      \int _{B_1} e^{2v} + \int _{B_1} |\nabla \phi|^{4/3} + \int _{B_1} |\phi|^4
    \right)^{1/4}\cdot |x|^{1/4C}
  \end{equation}
  for some positive $C$.
\end{lem}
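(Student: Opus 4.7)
For the pointwise bound \eqref{estimate_phi_1}, the plan is a scaling reduction to Lemma \ref{lem:small-energy-regul}(3). Given $x\in B_{1/2}(0)\setminus\{0\}$, set $r:=|x|/4$ and define the rescaled pair
\begin{equation*}
\tilde v(y):=v(x+ry)+\ln r,\qquad \tilde\phi(y):=r^{1/2}\phi(x+ry),\qquad y\in B_1(0).
\end{equation*}
The system \eqref{equat:06} is conformally invariant under this rescaling, so $(\tilde v,\tilde\phi)$ solves the same type of system on $B_1(0)$ with rescaled $\tilde H_i$ of uniformly bounded norms, and the energy is preserved: $E(\tilde v,\tilde\phi;B_1)=E(v,\phi;B_r(x))<\epsilon_2$. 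Choosing $\epsilon_2\leq\epsilon_1$ allows the application of Lemma \ref{lem:small-energy-regul}; the hypothesis $osc_{B_{|y|/2}(y)}v\leq C$ specialised at $y=x$ gives $osc_{B_1(0)}\tilde v\leq C$ because $r\leq|x|/2$, and conclusion (3) of that lemma yields
\begin{equation*}
\|\tilde\phi\|_{L^\infty(B_{1/2})}+\|\nabla\tilde\phi\|_{L^\infty(B_{1/2})}\leq C\|\tilde\phi\|_{L^4(B_1)}.
\end{equation*}
Evaluating at $y=0$ and undoing the rescaling, using $B_r(x)\subset B_{2|x|}(0)$, gives \eqref{estimate_phi_1}.

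For the weighted decay \eqref{estimate_phi_2}, the refined assumption $v=-(1-\epsilon)\ln|x|+O(1)$ breaks the scale-invariance of the Dirac equation since $e^v(x)\leq C|x|^{-(1-\epsilon)}$ is strictly subcritical. I plan to establish a polynomial energy decay
\begin{equation*}
\int_{B_\rho}|\phi|^4\,dx\leq C\rho^\alpha\Bigl(\int_{B_1}e^{2v}+\int_{B_1}|\nabla\phi|^{4/3}+\int_{B_1}|\phi|^4\Bigr)=:C\rho^\alpha M^4
\end{equation*}
for some $\alpha>0$ and all $\rho\leq 1/2$; plugging this into \eqref{estimate_phi_1} then yields \eqref{estimate_phi_2} with $1/(4C)=\alpha/4$. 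To prove the decay I work with the dyadic rescalings $\tilde\phi_\rho(y):=\rho^{1/2}\phi(\rho y)$, $\tilde v_\rho(y):=v(\rho y)+\ln\rho$ on the fixed reference annulus $A:=B_4(0)\setminus B_{1/4}(0)$; a direct computation from the assumption on $v$ gives $e^{\tilde v_\rho(y)}\leq C\rho^\epsilon|y|^{-(1-\epsilon)}$, so in particular $\|e^{\tilde v_\rho}\|_{L^2(A)}\leq C\rho^\epsilon$. Applying the interior $W^{1,4/3}$-estimate for Dirac (in the spirit of Lemma \ref{small_grad}) together with the two-dimensional Sobolev embedding $W^{1,4/3}\hookrightarrow L^4$, H\"older's inequality and the smallness $\|\tilde\phi_\rho\|_{L^4(A)}^2\leq\epsilon_2^{1/2}$ yields on a smaller annulus $A'\subset A$
\begin{equation*}
\|\tilde\phi_\rho\|_{L^4(A')}\leq C(\rho^\epsilon+\epsilon_2^{1/2})\|\tilde\phi_\rho\|_{L^4(A)}+C\|\tilde\phi_\rho\|_{L^{4/3}(A)}.
\end{equation*}
For $\rho$ and $\epsilon_2$ small enough the leading coefficient is strictly less than $1$, giving the basic contractive mechanism.

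Iterating this contraction on the dyadic scales $\rho_k=2^{-k}$ and summing the resulting geometric inequality over the dyadic annuli $A_k=B_{2^{-k+1}}\setminus B_{2^{-k-1}}$ yields the claimed polynomial decay of $\int_{B_\rho}|\phi|^4$. The main technical obstacle is the remainder term $\|\tilde\phi_\rho\|_{L^{4/3}(A)}$, which is not scale-invariant and does not decay under rescaling by itself, so one cannot close the iteration using $\|\phi\|_{L^4(B_1)}$ alone. To absorb it, I intend to employ a three-annulus covering argument: cover $A$ by finitely many small balls staying away from $0$, apply Lemma \ref{small_grad} on each, and use the ambient fixed quantity $\int_{B_1}|\nabla\phi|^{4/3}$ as a source term summing geometrically across scales. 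The $\int_{B_1}e^{2v}$ piece in \eqref{estimate_phi_2} plays the analogous role for the potential contribution. This is precisely why the right-hand side of \eqref{estimate_phi_2} involves the full ambient quantity $M^4$ rather than $\int_{B_1}|\phi|^4$ alone.
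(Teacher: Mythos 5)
Your treatment of \eqref{estimate_phi_1} is correct and is essentially the paper's argument: rescale around $x$ by $r\sim|x|$, use the oscillation hypothesis to verify the hypothesis of Lemma~\ref{lem:small-energy-regul}(3), and read off the $C^1$ bound at the center. The minor difference of $r=|x|/4$ versus $|x|/2$ is immaterial.

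Your plan for \eqref{estimate_phi_2}, however, has a real gap that you yourself partially flag but do not resolve. The dyadic contraction you write,
\begin{equation*}
\|\tilde\phi_\rho\|_{L^4(A')}\le C(\rho^\epsilon+\epsilon_2^{1/2})\|\tilde\phi_\rho\|_{L^4(A)}+C\|\tilde\phi_\rho\|_{L^{4/3}(A)},
\end{equation*}
cannot be iterated to give decay, for two reasons. First, by H\"older on the fixed annulus $A$, the remainder $\|\tilde\phi_\rho\|_{L^{4/3}(A)}$ is \emph{comparable} to $\|\tilde\phi_\rho\|_{L^4(A)}$ (up to an $O(1)$ constant), so the coefficient on the right-hand side is never below $1$ and the ``contraction'' is vacuous. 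Second, your proposed fix treats $\int_{B_1}|\nabla\phi|^{4/3}$ and $\int_{B_1}e^{2v}$ as fixed ambient source terms. But an iteration $a_{k+1}\le\theta a_k+b$ with $\theta<1$ and a \emph{fixed} $b>0$ stabilizes near $b/(1-\theta)$; it does not decay. To get polynomial decay you would need the source itself to decay geometrically across scales, and nothing in your outline produces that. The phrase ``three-annulus covering argument'' is not a proof of this.

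The paper avoids this entirely by a cleaner mechanism: after the elliptic estimate and absorbing the small terms, it proves three trace-type inequalities $\int_{B_r}|\phi|^4\le Cr\int_{\partial B_r}(|\nabla\phi|^{4/3}+|\phi|^4)$, $\int_{B_r}|\nabla\phi|^{4/3}\le Cr\int_{\partial B_r}|\phi|^4+C\int_{B_r}e^{2v}+Cr\int_{\partial B_r}|\nabla\phi|^{4/3}$ (using the mean-subtracted spinor $\phi-\bar\phi$ and a Poincar\'e inequality on the circle), and $\int_{B_r}e^{2v}\le Cr\int_{\partial B_r}e^{2v}$ (from $e^{2v}\sim|x|^{-(2-2\epsilon)}$). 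Setting $F(r):=\int_{B_r}\bigl(e^{2v}+|\nabla\phi|^{4/3}+|\phi|^4\bigr)$ and summing yields the differential inequality $F(r)\le CrF'(r)$, hence $F(r)\le F(1)r^{1/C}$. The key structural point you are missing is that the gradient term $\int_{B_r}|\nabla\phi|^{4/3}$ must be \emph{part of the decaying quantity}, not a fixed background constant; the closed ODI for $F$ makes all three pieces decay together, and then \eqref{estimate_phi_2} follows by plugging the decay into \eqref{estimate_phi_1}. You would need to reorganize your second half along these lines (or another genuinely closed iteration in which the source terms also decay) for the argument to go through.
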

\begin{proof}
We follow the idea of \cite{Jost-Wang-Zhou}, but with some revisions to deal with the nonlinear spinorial Yamabe term.

  Fix any $x_0\in B_{1/2}\setminus \{0\}$ and define $(\tilde{v},\tilde{\phi})$ by
\begin{equation*}
  \begin{aligned}
    \tilde{v}(x) = v(x_0 + \frac{1}{2}|x_0|x)+ \log(\frac{1}{2}|x_0|),\ \
    \tilde{\phi}(x) = (\frac{1}{2}|x_0|)^{1/2} \phi(x_0 + \frac{1}{2}|x_0|x).
  \end{aligned}
\end{equation*}
 It is clear that $(\tilde{v},\tilde{\phi})$ is smooth solution to \eqref{equat:07}
 in $B_1$ with
 $$E(\tilde{v},\tilde{\phi},B_1)<\epsilon_2,\ \ osc_{\partial B_1(0)}\tilde{v}\leq C.$$ Taking $\epsilon_2<\epsilon_1$,  by Lemma \ref{lem:small-energy-regul}, we have $ \|\tilde{\phi}\|_{C^1(B_{1/2})} \le C\|\tilde{\phi}\|_{L^4(B_1)}, $ which immediately implies \eqref{estimate_phi_1}.

 Next we recall that $\phi$ satisfies
 \begin{equation*}
  \slashed{D}\phi = -(e^{2v} + 2H_2|\phi|^2)\phi\ \ in \ \ B_1\setminus \{0\}.
 \end{equation*}
 Similar to lemma \ref{small_grad}, we have
 \begin{equation*}
  \slashed{D}(1-\eta_\epsilon)\phi = -(1-\eta_\epsilon)(e^v + 2H_2 |\phi|^2)\phi - d\eta_\epsilon \cdot \phi.
\end{equation*}
From the elliptic estimate we have
\begin{equation*}
  \begin{aligned}
  \norm{(1-\eta_\epsilon)\phi }_{W^{1,4/3}(B_1)}&\le C\left(
    \norm{(1-\eta_\epsilon)(e^v +  |\phi|^2)\phi}_{L^{4/3}(B_1)}
    +\norm{d\eta_\epsilon \cdot \phi}_{L^{4/3}(B_1)}
    +\norm{(1-\eta_\epsilon)\phi }_{W^{1,4/3}(\partial B_1)}
  \right)\\
  &\le C \left(
    \norm{e^v +  |\phi|^2}_{L^2(B_1)}\norm{\phi}_{L^4(B_1)}
    +\norm{d\eta_\epsilon \cdot \phi}_{L^{4/3}(B_1)} +\norm{\phi}_{W^{1,4/3}(\partial B_1)}
  \right).
  \end{aligned}
\end{equation*}
From (\ref{estimate_phi_1}), we have
$
  \lim_{\epsilon \rightarrow 0}\frac{1}{\epsilon}\norm{\phi}_{L^{4/3}(B_{2\epsilon})}= 0.
$
Thus
  \begin{align}\label{inequ:04}
  \norm{\phi}_{L^4(B_1)}\le\lim_{\epsilon\rightarrow 0}\norm{(1-\eta_\epsilon)\phi }_{W^{1,4/3}(B_1)}&\le C\left(
    \norm{e^v +  |\phi|^2}_{L^2(B_1)}\norm{\phi}_{L^4(B_1)}
     +\norm{\phi}_{W^{1,4/3}(\partial B_1)}
  \right)\notag\\
  &\le C\left(
   \epsilon_2\norm{\phi}_{L^4(B_1)}
     +\norm{\phi}_{W^{1,4/3}(\partial B_1)}
  \right).
  \end{align}

Taking $\epsilon_2$ small, we have
\begin{equation*}
  \norm{\phi}_{L^4(B_1)} \le C \norm{\phi}_{W^{1,4/3}(\partial B_1)}.
\end{equation*}

By rescaling argument, we get
\begin{equation}\label{esti_phi_Br}
  \int _{B_r}|{\phi}|^4 \le Cr \left(
    \int _{\partial B_r} |\nabla \phi|^{4/3} + \int _{\partial B_r}|\phi|^4
  \right)\ \ \forall\ \ 0\le r \le 1.
\end{equation}
Denote $\bar{\phi} :=\frac{1}{|\partial B_1|}\int_{\partial B_1} \phi $,
then we have \begin{equation*}
  \slashed{D}(\phi - \bar{\phi}) = -(e^v + H_2 |\phi|^2)(\phi - \bar{\phi}) - (e^v+H_2 |\phi|^2)\bar{\phi}\ \,in \ \ B_1\setminus \{0\}.
\end{equation*}
Similar to deriving \eqref{inequ:04} and using Poincare inequality, we have
  \begin{align}\label{inequ:05}
  \norm{\phi - \bar{\phi}}_{W^{1,4/3}(B_1)}&\le C\left(
    \norm{(e^v + |\phi|^2)(\phi - \bar{\phi})}_{L^{4/3}(B_1)} +
    \norm{(e^v + |\phi|^2)\bar{\phi}}_{L^{4/3}(B_1)} +
    \norm{\phi - \bar{\phi}}_{W^{1,4/3}(\partial B_1)}
  \right)\notag\\
  &\le C\left(
    \norm{e^v + |\phi|^2}_{L^2(B_1)}\norm{\phi - \bar{\phi}}_{L^4(B_1)} +
    |\bar{\phi}|\cdot \norm{e^v + |\phi|^2}_{L^{4/3}(B_1)} + \norm{\nabla \phi}_{L^{4/3}(\partial B_1)}
  \right)\notag\\
  &\le C\epsilon_2 \norm{\phi - \bar{\phi}}_{W^{1,4/3}(B_1)}
  + C|\bar{\phi}| \cdot \norm{e^v + |\phi|^2}_{L^{4/3}(B_1)}
  + C\norm{\nabla \phi}_{L^{4/3}(\partial B_1)}.
  \end{align}

By Lemma  \ref{small_grad}, we have $$\|\nabla \phi\|_{L^{\frac{4}{3}}(B_1(0))}\leq C\|\phi\|_{L^4(B_2(0))}\leq C(\epsilon_2)^{\frac{1}{4}}.$$ Noticing that
\begin{equation*}
  \begin{aligned}
    |\bar{\phi}| \cdot \norm{e^v}_{L^{4/3}(B_1)}\le
    C\left(
      \int _{\partial B_1} |\phi|^4
    \right)^{1/4} \norm{e^v}_{L^2(B_1)}\le C\left(
      \int _{\partial B_1} |\phi|^4
    \right)^{3/4} + C\left(
      \int _{B_1} e^{2v}
    \right)^{3/4},
  \end{aligned}
\end{equation*}
and
\begin{equation*}
  \begin{aligned}
    |\bar{\phi}| \cdot \norm{|\phi|^2}_{L^{4/3}(B_1)}
    &\le C|\bar{\phi}| \cdot \norm{|\phi - \bar{\phi}|^2 + |\bar{\phi}|^2}_{L^{4/3}(B_1)}\\
    &\le C \norm{|\phi - \bar{\phi}|^2}^{\frac{3}{2}}_{L^{4/3}(B_1)} + C|\bar{\phi}|^3\\
    &\le C \norm{\phi - \bar{\phi}}^3_{L^{4}(B_1)}
    + C|\bar{\phi}|^3\\
    &\le C \norm{\nabla \phi}^2_{L^{4/3}(B_1)}\norm{\phi - \bar{\phi}}_{W^{1,4/3}(B_1)}
    + C\left(
      \int _{\partial B_1} |\phi|^4
    \right)^{3/4},
  \end{aligned}
\end{equation*}
putting these into \eqref{inequ:05} and taking $\epsilon_2$ small enough, we get
\begin{equation*}
  \norm{\phi - \bar{\phi}}_{W^{1,4/3}(B_1)}
  \le C\left(
    \int_{\partial B_1} |\phi|^4
  \right)^{3/4}
  + C\left(
    \int_{B_1} e^{2v}
  \right)^{3/4}
  +C\norm{\nabla  \phi}_{L^{4/3}(\partial B_1)}.
\end{equation*}
With a rescaling argument,  we have
\begin{equation*}
  \int_{B_r} |\nabla \phi|^{4/3}\le Cr \int _{\partial B_r}|\phi|^4 +C\int _{B_r} e^{2v} +Cr\int _{\partial B_r} |\nabla \phi|^{4/3}.
\end{equation*}
Noticing that $\frac{C_1}{|x|^{2-\epsilon}}\leq e^{2v} \leq \frac{C_2}{|x|^{2-\epsilon}}$, we have
$
  \int_{B_r} e^{2v} \le Cr\int_{\partial B_r} e^{2v}.
$
Thus \begin{equation*}
  \int _{B_r}e^{2v}+\int _{B_r} |\nabla \phi|^{4/3} + \int _{B_r} |\phi|^4
  \le Cr\left(
    \int_{\partial B_r} e^{2v} +|\nabla \phi|^{4/3} + |\phi|^4
  \right).
\end{equation*}
Denote
\begin{equation*}
  F(r):=\int _{B_r}e^{2v}+\int _{B_r} |\nabla \phi|^{4/3} + \int _{B_r} |\phi|^4,\ \ \forall 0\le r \le 1,
\end{equation*}
then $  F(r)\le Cr\cdot F'(r)$, which yields
\begin{equation*}
  F(r)\le F(1)\cdot r^{1/c}.
\end{equation*} Then \eqref{estimate_phi_2} follows from \eqref{estimate_phi_1}.
\end{proof}

\

\section{Removability of local singularities}\label{sec:removability of local singularities}

\

In this section, we shall study the local singularity for super-Liouville equation with a spinorial Yamabe type term. On one hand, we will establish some asymptotic estimates near the singularity. On the other hand, similar to \cite{Jost-Zhou-Zhu}, by defining a so-called Pohozaev type constant,  we  show that a local singularity is removable if and only if the Pohozaev identity is satisfied.

We consider the following super-Liouville equation with a spinorial Yamabe type term in $B_1(0)\setminus \{0\} $ that
\begin{equation}\label{Liouville on Br-0}
    \begin{cases}
    -\Delta u = 2e^{2u} - e^u |\psi|^2 ,\\
    \slashed{D}\psi = -e^u\psi - 2F |\psi|^2\psi,
\end{cases}\ \  x \in   B_1(0)\setminus \{0\}.
  \end{equation}

\

We first derive the following Pohozaev type constant.
\begin{lem}\label{lem:pohozaev-const}
  Let $(u,\psi)\in C^2(B_1(0)\setminus \{0\}) \times C^2(\Gamma(\Sigma(B_1(0)\setminus \{0\})))$ be solution of (\ref{Liouville on Br-0}) with $    E(u,\psi; B_1)<\infty$. Then  for $0<R<1$, the following quantity
    \begin{align*}
    C(u,\psi;R)
    := &R\int _{\partial B_R} \left(\left|
      \frac{\partial u}{\partial \nu}
    \right|^2 - \frac{1}{2}|\nabla u|^2\right)- \int _{B_R} \left(2e^{2u} - e^u|\psi|^2 -|\psi|^4 x\cdot \nabla F\right)dx
    \\
    &+R\int_{\partial B_R} \left(e^{2u} + F|\psi|^4\right)
     -\frac{1}{2}\int_{\partial B_R} \left(\left<
      x\cdot \psi,\frac{\partial \psi}{\partial \nu}
    \right>+\left<
      \frac{\partial \psi}{\partial \nu},x\cdot \psi
    \right>\right)
    \end{align*}
is independent of $R$. We call $C(u,\psi)=C(u,\psi;R)$ a Pohozaev type constant for $(u,\psi)$ near the singular point $0$.
\end{lem}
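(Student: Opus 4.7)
The strategy is to apply the Pohozaev-type calculation that yielded Lemma~\ref{Pohozaev for Liouville} on the annular region $A_{r,R} := B_R \setminus B_r$ in place of the ball, for arbitrary $0 < r < R < 1$. Since $(u,\psi)$ is smooth away from the origin, every integration by parts performed in the proof of Lemma~\ref{Pohozaev for Liouville} remains valid on the closed annulus $\overline{A_{r,R}}$.

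Concretely, I would multiply the first equation of \eqref{Liouville on Br-0} by $x\cdot \nabla u$ and integrate over $A_{r,R}$. Two differences with respect to the $B_R$ computation appear. First, $\partial A_{r,R}$ has two components, with outer normal $\nu = x/R$ on $\partial B_R$ and $\nu = -x/r$ on $\partial B_r$, so each boundary term $R\!\int_{\partial B_R}(\cdots)$ in Lemma~\ref{Pohozaev for Liouville} is now accompanied by a term $-r\!\int_{\partial B_r}(\cdots)$ of exactly the same algebraic form (the sign comes from $x = -r\nu$ on $\partial B_r$ combined with the flipped normal direction, which together produce the standard $\partial B_r$ version with an overall factor $-r$). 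Second, equation \eqref{Liouville on Br-0} is the local equation and carries no $K_g$ term, so the Gauss-curvature integral of Lemma~\ref{Pohozaev for Liouville} is simply absent here.

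The spinor computation is performed on $A_{r,R}$ in exactly the same manner, using the Schr\"odinger--Lichnerowicz formula and the skew-adjointness of Clifford multiplication as in the proof of Lemma~\ref{Pohozaev for Liouville}; once again every boundary contribution at $\partial B_R$ is paired with one of opposite sign at $\partial B_r$. Combining the $u$-calculation with the spinor calculation on $A_{r,R}$, and writing $\int_{A_{r,R}} = \int_{B_R} - \int_{B_r}$ (which is legitimate because the energy bound $E(u,\psi;B_1)<\infty$ together with $e^u|\psi|^2 \le \tfrac{1}{2}(e^{2u}+|\psi|^4)$ and $F\in C^1$ make every integrand individually integrable on $B_R$), one obtains an identity that, after grouping all $R$-quantities on one side and all $r$-quantities on the other, reads
\[
C(u,\psi;R) = C(u,\psi;r).
\]
This is the conclusion of the lemma.

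I do not anticipate a genuine obstacle. The content is essentially that the Pohozaev boundary expression at $\partial B_R$, corrected by the interior ``defect'' integrals, behaves like a conserved flux as $R$ varies; the energy at the puncture may be nontrivial but the excess flux it carries is $R$-independent. The only delicate points are the sign bookkeeping on $\partial B_r$ (standard outer-normal flip) and the integrability verifications just mentioned, both of which are routine.
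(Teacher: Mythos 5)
Your proposal matches the paper's proof: the paper also observes that $(u,\psi)$ is smooth on the closed annulus $\overline{B_R\setminus B_r}$ and repeats the Pohozaev computation of Lemma~\ref{Pohozaev for Liouville} there, which produces matching boundary contributions at $\partial B_R$ and $\partial B_r$ and, after decomposing the volume integral as $\int_{B_R}-\int_{B_r}$ (justified by the finite-energy hypothesis exactly as you note), yields $C(u,\psi;R)=C(u,\psi;r)$. The paper states this in a single sentence without spelling out the sign bookkeeping on the inner boundary or the integrability check; your write-up supplies both, but the route is identical.
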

\begin{proof}
It is clear that for all $0<t\le r < 1$, $(u,\psi)\in C^2(B_r(0)\setminus B_t(0)) \times C^2(\Gamma(\Sigma(B_r(0)\setminus  B_t(0))))$.
We can integrate in the domain $B_r(0)\setminus B_t(0)$ and make the same progress as in the proof of Pohozaev identity.
\end{proof}

\

Next we show that the local singularity is removable if and only if the Pohozaev type constant vanishes.

  \begin{prop}(Removability of local singularity)\label{Removability of local singularity}
    Let $(u,\psi)\in C^2(B_1(0)\setminus \{0\}) \times C^2(\Gamma(\Sigma(B_1(0)\setminus \{0\})))$ be a solution of
    (\ref{Liouville on Br-0}) with $    E(u,\psi; B_1(0))<\infty$. Then the following alternatives hold:
    \begin{itemize}

    \item[(1)] There is a constant $\gamma<2\pi$ such that
    \begin{equation*}
      u(x) = -\frac{\gamma}{2\pi} \log|x| + h,\ \ near\ \  0
    \end{equation*}
    where $h$ is Holder continues near 0.

    \

    \item[(2)] There exists a small positive constant $\epsilon>0$ such that $$\nabla u=-\frac{\gamma}{2\pi}\frac{x}{|x|^2}+O(|x|^{-1+\epsilon}),\ \ |\psi|\leq C|x|^{-\frac{1}{2}+\epsilon},\ \ |\nabla\psi|\leq C|x|^{-\frac{3}{2}+\epsilon} \ \ near\ \ 0.$$

    \

    \item[(3)] The Pohozaev constant $C(u,\psi)$ and $\gamma$ satisfy
    \begin{equation*}
      C(u,\psi) = \frac{\gamma^2}{4\pi}.
    \end{equation*}
    In particular, the local singularity is removable iff $C(u,\psi)=0$.
    \end{itemize}
  \end{prop}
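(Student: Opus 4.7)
The plan is to follow the Brezis--Merle removal-of-singularity paradigm: first extract a logarithmic singularity from $u$, next use the preceding lemma to promote that information to polynomial decay for $\psi$, and finally compute the Pohozaev constant $C(u,\psi)$ explicitly via the resulting asymptotic expansions, which makes the equivalence in (3) transparent.

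For the logarithmic structure of $u$, set $f:=2e^{2u}-e^{u}|\psi|^{2}$. The bound $E(u,\psi;B_{1})<\infty$ together with $e^{u}|\psi|^{2}\le e^{2u}+|\psi|^{4}$ gives $f\in L^{1}(B_{1})$. Let $w$ be the Newtonian potential of $f$ on $B_{1}$; then $w\in W^{1,p}(B_{1})$ for every $p<2$ and $u-w$ is harmonic on $B_{1}\setminus\{0\}$. The classical structure theorem for harmonic functions on a punctured disk, combined with the a priori integrability $e^{2u}\in L^{1}$ to exclude negative-power spherical harmonics of order $\ge 1$, yields
\[
u(x)=-\tfrac{\gamma}{2\pi}\log|x|+h(x),\qquad \gamma\in\R.
\]
The constraint $e^{2u}\asymp|x|^{-\gamma/\pi}e^{2h}\in L^{1}$ then forces $\gamma<2\pi$. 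A standard elliptic bootstrap (once $h$ is locally bounded, the residual right-hand side of $-\Delta h=f$ lies in $L^{q}_{\mathrm{loc}}$ for some $q>1$) promotes $h$ to a H\"{o}lder-continuous function near $0$, giving (1).

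With this expansion in hand I apply inequality \eqref{estimate_phi_2} of the preceding lemma to $(v,\phi)=(u,\psi)$ with $H_{1}=0$ and $H_{2}=-2F$: the coefficient condition $v=-(1-\epsilon_{0})\log|x|+O(1)$ holds with $\epsilon_{0}:=1-\tfrac{\gamma}{2\pi}>0$; the oscillation hypothesis on $v$ follows from the H\"{o}lder regularity of $h$ together with the uniform control of $\log|x|$ on balls $B_{|y|/2}(y)$; and the small-energy hypothesis is arranged by passing to a sufficiently small punctured ball, since the $L^{1}$ tails of $e^{2u}$ and $|\psi|^{4}$ vanish at $0$. This yields $|\psi(x)|\le C|x|^{-1/2+\epsilon}$ and $|\nabla\psi(x)|\le C|x|^{-3/2+\epsilon}$ for some $\epsilon>0$. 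Feeding these back into $-\Delta u=2e^{2u}-e^{u}|\psi|^{2}$ and running a Schauder-type bootstrap on $h$ produces $|\nabla h(x)|\le C|x|^{-1+\epsilon}$; combined with $\nabla\bigl(-\tfrac{\gamma}{2\pi}\log|x|\bigr)=-\tfrac{\gamma}{2\pi}\tfrac{x}{|x|^{2}}$, this delivers (2).

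For (3), I let $R\to 0$ in $C(u,\psi;R)$, exploiting its $R$-independence from Lemma \ref{lem:pohozaev-const}. Substituting the expansion of $\nabla u$ into the leading boundary integral gives
\[
R\int_{\partial B_{R}}\Bigl(\bigl|\tfrac{\partial u}{\partial\nu}\bigr|^{2}-\tfrac{1}{2}|\nabla u|^{2}\Bigr)\,d\theta\longrightarrow\frac{\gamma^{2}}{4\pi},
\]
while every remaining term of $C(u,\psi;R)$ vanishes in the limit: the volume integrals tend to zero by dominated convergence (their integrands lie in $L^{1}(B_{1})$), the term $R\int_{\partial B_{R}}(e^{2u}+F|\psi|^{4})$ is $O(R^{2-\gamma/\pi})+O(R^{4\epsilon})\to 0$, and the spinor boundary term is controlled by $O(R^{2\epsilon})\to 0$ via the pointwise estimates for $\psi$ and $\nabla\psi$. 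Hence $C(u,\psi)=\gamma^{2}/(4\pi)$, and the three conditions $C(u,\psi)=0$, $\gamma=0$, and removability of the singularity at the origin become equivalent. The hardest step will be the Schauder bootstrap yielding $|\nabla h|=O(|x|^{-1+\epsilon})$: the spinorial Yamabe term $2F|\psi|^{2}\psi$ modifies the asymptotic behavior of $\psi$ near the singularity, and it is precisely the sharper decay provided by \eqref{estimate_phi_2} --- tailored in the preceding lemma to accommodate this nonlinearity --- that supplies the room needed to close the argument, in contrast to the pure super-Liouville situation of \cite{Jost-Zhou-Zhu}.
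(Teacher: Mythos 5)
Your proposal follows the paper's overall architecture (split off a $-\tfrac{\gamma}{2\pi}\log|x|$ piece, feed the resulting oscillation bound into the spinor estimate, compute $C(u,\psi)$ from the asymptotics), and the final Pohozaev computation in part (3) is carried out correctly. But there is a genuine gap in the argument for part (1). You write ``once $h$ is locally bounded, the residual right-hand side of $-\Delta h=f$ lies in $L^q_{\mathrm{loc}}$ for some $q>1$'' and then bootstrap to H\"older continuity --- but the boundedness of $h$ is never established, and it is precisely what needs to be proved, since $h$ contains the Newtonian potential $w$ of $f\in L^1$, which is only in $W^{1,p}$, $p<2$, and need not be continuous, let alone bounded. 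In the paper this is the point where one reduces WLOG to $E(u,\psi;B_1)\le\epsilon_0$ and applies Brezis--Merle's Theorem 1 to the auxiliary Dirichlet problem $-\Delta v=f$, $v|_{\partial B_1}=0$, obtaining $e^{|v|}\in L^p$ for large $p$; combined with the preliminary one-sided bounds $u\le -\log|x|+C$ and $|\psi|\le C|x|^{-1/2}$ (established first by a rescaling argument through Lemma~\ref{lem:small-energy-regul}), this yields $2e^{2u}-e^u|\psi|^2\in L^q$ for some $q>1$, which starts the bootstrap. Your proposal contains no substitute for the Brezis--Merle exponential-integrability input, so the H\"older regularity (and everything downstream, since the oscillation hypothesis for the spinor lemma rests on it) is unsupported.

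Two further comments. First, you replace the paper's Step 1 entirely: instead of the rescaled small-energy estimate giving $u\le -\log|x|+C$, you invoke ``the classical structure theorem for harmonic functions on a punctured disk'' plus integrability of $e^{2u}$ to rule out negative-power modes. This is delicate: harmonic functions on $B_1\setminus\{0\}$ can have essential singularities, not merely Laurent tails, so one really wants a one-sided growth bound before applying a B\^ocher-type theorem. The paper's Step 1 supplies exactly that bound cleanly and also the bound $|\psi|\le C|x|^{-1/2}$ used later in estimating $e^u|\psi|^2$; if you insist on skipping it, you should show carefully that $e^{2u}\in L^1$ excludes essential singularities and then use $|\psi|^2\in L^2$ (from $|\psi|^4\in L^1$) together with H\"older to control $e^u|\psi|^2$. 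Second, you apply estimate~\eqref{estimate_phi_2} with $v=-(1-\epsilon_0)\log|x|+O(1)$ and $\epsilon_0=1-\gamma/(2\pi)$; that lemma assumes $\epsilon\in(0,1)$, which only covers $0<\gamma<2\pi$, so a word about the (easier) case $\gamma\le 0$ would be in order.
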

  \begin{proof}
     Without loss of generality, we assume that
     \begin{equation}\label{inequ:02}
     \int_{B_1(0)} \left(e^{2u} + |\psi|^4\right) dx\le \epsilon_0,\end{equation} where $\epsilon_0$ is a small constant which will be determined later. Next, we divide the proof into three steps.

     \

    \textbf{Step 1:} We prove that:
    \begin{equation}\label{inequ:01}
      u(x) \le -\log|x| + C,\ \         |\psi(x)| \le C |x|^{-1/2},\forall x \in B_{1/2}(0)\setminus \{0\}.
    \end{equation}
    For any $x_0\in B_{1/2}(0)\setminus \{0\} $, we have
    \begin{equation*}
      \int_{B_{\frac{|x_0|}{2}} (x_0)} \left(e^{2u}+|\psi|^4\right)dx \le \int_{B_1(0)}\left(e^{2u}+|\psi|^4\right)dx \le \epsilon_0.
    \end{equation*}
    Let
    \begin{equation*}
        \tilde{u}(x) = u(x_0 + \frac{1}{2}|x_0| x) + \ln\frac{|x_0|}{2},\ \
        \tilde{\psi}(x) = \left(
          \frac{|x_0|}{2}
        \right)^{1/2} \psi(x_0 + \frac{1}{2}|x_0|x).
    \end{equation*}
    Then we have
    \begin{equation*}
      \int_{B_1(0)} \left(e^{2\tilde{u}}+|\tilde{\psi}|^4\right)dx  \le \epsilon_0
    \end{equation*}
    and
    \begin{equation*}
      \begin{cases}
      -\Delta \tilde{u} = 2e^{2\tilde{u}} - e^{\tilde{u}}|\tilde{\psi}|^2 ,\\
        \slashed{D} \tilde{\psi} = -(e^{\tilde{u}} + 2G |\tilde{\psi}|^2) \tilde{\psi},
      \end{cases} \ \  x\in B_1,
    \end{equation*} where  $G(x)=F(x_0 + \frac{1}{2}|x_0| x)$.

    By Lemma \ref{lem:small-energy-regul}, we get
    $$\tilde{u}(x)\le C \ \ and\ \  |\tilde{\psi}(x)| \le C,\ \  \forall\  x\in B_{1/2}(0).$$
    Specially, there holds $\tilde{u}(0)\le C$ and $\tilde{\psi}(0)\le C$.
    This is $$u(x_0) \le -\log|x_0| +C\ \  and\ \  \psi(x_0) \le C|x_0|^{-1/2}.$$

    \

    \textbf{Step 2:} We prove that: there exists a constant $\gamma<2\pi$ such that
    \begin{equation}\label{equat:04}
    \lim_{x\rightarrow 0} \frac{u(x)}{-\log|x|} = \frac{\gamma}{2\pi}.
    \end{equation}

    \

    Let $f:=2e^{2u} - e^u |\psi|^2 $ and $v(x)$ be the solution of
    \begin{align*}
    \begin{cases}
    -\Delta v(x)=f(x)\ \ &in \ \ B_1(0),\\
    v(x)=0,\ \ &on\ \ \partial B_1(0).
    \end{cases}
    \end{align*}
    Then
    \begin{equation}\label{equat:05}
      v(x)=-\frac{1}{2\pi}\int_{B_1(0)} \log |x-y| f(y)dy+h(x),
    \end{equation} where $h(x)\in C^1(B_1(0))$.

By a standard potential analysis, we have
    \begin{equation}\label{equat:03}
      \lim_{x\rightarrow 0} \frac{v(x)}{\ln|x|} = 0.
    \end{equation}

    Since $u-v$ is a harmonic function in $B_1(0)\setminus \{0\}$ and $u-v\leq -C\ln |x|+C$, we know that there exists a constant $\gamma$ such that
    \begin{equation}\label{equat:08}
      u -v = -\frac{\gamma}{2\pi} \log|x| + w_0
    \end{equation}
    where $w_0$ is a smooth harmonic in $B_1(0)$. This immediately implies \eqref{equat:04}. Since $e^{2u}\in L^1(B_1(0))$, it is clear that $\frac{\gamma}{2\pi}<1$.

\

    \textbf{Step 3:} We prove that: there exists a small positive constant $\epsilon>0$ such that
    \begin{equation}
     u(x) = -\frac{\gamma}{2\pi} \log|x| + h,\ \ \nabla u= -\frac{\gamma}{2\pi} \frac{x}{|x|^2}+O(|x|^{-1+\epsilon}).
    \end{equation}

    \

By \eqref{inequ:02} and  Theorem 1 in \cite{Brezis}, if we take $\epsilon_0$ small, then we have $e^{|v|}\in L^p(B_1(0))$ for some $p> 100$. Using  \eqref{equat:08}, we get
\begin{align*}
e^{2u}+e^{u}|\psi|^2\leq C|x|^{-\frac{\gamma}{\pi}}e^{2v}+C|x|^{-\frac{\gamma}{2\pi}-1}e^{v},
\end{align*}  which implies $$e^{2u}+e^{u}|\psi|^2\in L^q(B_1(0)),\ \ for\ \ some\  \ q>1.$$

Then $f\in L^q(B_1(0))$ and the standard elliptic estimate yields $v\in C^\beta(B_1(0))$ for some $\beta\in (0,1)$. By  \eqref{equat:08}, we get the first conclusion of the proposition and following estimate $$osc_{B_{\frac{1}{2}|y|}(y)}u\leq osc_{B_{\frac{1}{2}|y|}(y)} \left(-\frac{\gamma}{2\pi}\ln |x|\right)+C\leq C,\ \ \forall \ y\in B_{\frac{1}{2}}(0).$$

  It is follows from estimate (\ref{estimate_phi_1}) and (\ref{estimate_phi_2}) that there exists a constant $\delta >0$ such that
  \begin{equation*}
    \begin{aligned}
    |\psi| &\le C|x|^{-\frac{1}{2}+\delta},\ \
    |\nabla \psi| &\le C  |x|^{-\frac{3}{2}+\delta}.
    \end{aligned}
  \end{equation*}

Next we claim: there exists a small positive constant $\epsilon>0$ such that
\begin{equation}\label{inequ:03}
|\nabla v(x)|\leq C|x|^{-1+\epsilon}.
\end{equation}

Since now we have $|f(x)|\leq C|x|^{-s}$ for some $0<s<2$ and $f\in L^q(B_1(0))$, then we get
  \begin{equation*}
    \begin{aligned}
    \int_{B_1(0)} \frac{1}{|x-y|}|f(y)|dy
    =\int_{\{|x-y|\ge \frac{|x|}{2}\}} \frac{1}{|x-y|}|f(y)|dy
    + \int_{\{|x-y|\le \frac{|x|}{2}\}} \frac{1}{|x-y|}|f(y)|dy=\mathbf{II_1} + \mathbf{II_2}.
    \end{aligned}
  \end{equation*}
  Denoting $q'=\frac{q}{q-1}$, then we have
  \begin{equation*}
    \mathbf{II_1} =\int_{\{|x-y|\ge \frac{|x|}{2}\}} |x-y|^{-1+\frac{1}{q'}}|x-y|^{-\frac{1}{q'}}|f(y)|dy\leq C|x|^{-1+\frac{1}{q'}}\int_{B_1(0)} |x-y|^{-\frac{1}{q'}}|f(y)|dy\leq C|x|^{-1+\frac{1}{q'}}
  \end{equation*}
and
  \begin{equation*}
    \mathbf{II_2}=\int_{\{|x-y|\le \frac{|x|}{2}\}} \frac{1}{|x-y|}|f(y)|dy \le \int_{\{|x-y|\le \frac{|x|}{2}\}} \frac{1}{|x-y|}\cdot |x|^{-s}dy\le C|x|^{1-s}.
  \end{equation*}
Combining above estimates and \eqref{equat:05}, we get  \eqref{inequ:03}.

Thus,
  \begin{equation*}
    \nabla u = -\frac{\gamma}{2\pi} \frac{x}{|x|^2} + \nabla (w_0 + v(x)) = -\frac{\gamma}{2\pi} \frac{x}{|x|^2}+O(|x|^{-1+\epsilon}).
  \end{equation*}

With the help of above estimates, we have that
    \begin{align*}
    R\int_{\partial B_R}
    \left|
      \frac{\partial u}{\partial \nu}
    \right|^2 - \frac{1}{2}|\nabla u|^2
    =\frac{\gamma^2}{4\pi} + o_R(1),\ \
    &\int_{B_R} \left(2e^{2u} -e^u|\psi|^2 -|\psi|^4 x\cdot \nabla F\right)dx   = o_R(1),\\
    R\int_{\partial B_R} e^{2u}+F|\psi|^4 = o_R(1),\ \ \ \ \ \ \ \
    &\frac{1}{2}\int_{\partial B_R} \left<
      x\cdot \psi,\frac{\partial \psi}{\partial \nu}
    \right>+\left<
      \frac{\partial \psi}{\partial \nu},x\cdot \psi
    \right> =o_R(1),
    \end{align*} where $\lim_{R\to 0}o_R(1)=0$.

By Lemma \ref{lem:pohozaev-const}, we get
\begin{equation*}
    C(u,\psi)=\lim_{R\rightarrow 0} C(u,\psi;R)=\frac{\gamma^2}{4\pi},
  \end{equation*} which implies the third conclusion of proposition.
\end{proof}

\

\section{Bubble's equation}\label{sec:Bubble's equation}

\

In this section, we will study the properties of the equation of super-Liouville type bubble which is defined on the whole plane. Specially, we will show that the energy of this type bubble is $4\pi$ and the singularity at infinity is removable.

The bubble equation is
\begin{equation}\label{bubble-eq}
  \begin{cases}
      -\Delta u &= 2e^{2u} - e^u |\psi|^2 ,\\
      \slashed{D}\psi &= -e^u\psi - 2\mu|\psi|^2\psi,
  \end{cases}in\ \  \mathbb{R}^2,
\end{equation}
where $\mu\in \mathbb{R}$
with energy condition $  E(u,\psi;\mathbb{R}^2)<\infty. $

\begin{lem}\label{lem:03}
  Let $(u,\psi)$ be a smooth solution to (\ref{bubble-eq}) with  finite energy
  $E(u,\psi;\mathbb{R}^2) <\infty$. Then $u^+ \in L^{\infty} (\mathbb{R}^2)$.
\end{lem}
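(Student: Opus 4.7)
The plan is to obtain a uniform upper bound on $u$ by splitting $\mathbb{R}^2$ into a large compact ball, where smoothness alone suffices, and its complement, where the small-energy regularity estimate gives control. Since $(u,\psi)$ is smooth on all of $\mathbb{R}^2$, $u$ is automatically bounded on any fixed compact set; the only real issue is controlling $u^+$ near infinity.

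First, I would invoke the absolute continuity of the integral: since $\int_{\mathbb{R}^2}(e^{2u}+|\psi|^4)\,dx<\infty$, there exists $R_0>0$ such that
$$E(u,\psi;\mathbb{R}^2\setminus\overline{B_{R_0}(0)})<\epsilon_1,$$
where $\epsilon_1$ is the constant from Lemma \ref{lem:small-energy-regul}. For any $x_0$ with $|x_0|\ge R_0+1$, the ball $B_1(x_0)$ is contained in $\mathbb{R}^2\setminus\overline{B_{R_0}(0)}$, so $E(u,\psi;B_1(x_0))<\epsilon_1$. The translated pair $\tilde u(y):=u(x_0+y)$, $\tilde\psi(y):=\psi(x_0+y)$ satisfies the hypotheses of Lemma \ref{lem:small-energy-regul} on $B_1(0)$ with $H^1\equiv 0$ and $H^2\equiv -2\mu$ (both trivially bounded), so conclusion (1) of that lemma yields $\|\tilde u^+\|_{L^\infty(B_{1/2}(0))}\le C$ for a constant $C$ that depends only on $\mu$ and the total energy, not on $x_0$. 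Translating back gives $u^+(x_0)\le C$ whenever $|x_0|\ge R_0+1$.

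On the compact set $\overline{B_{R_0+1}(0)}$, the smooth function $u$ attains a finite maximum $C'$. Combining the two bounds yields $u^+\le \max\{C,C'\}$ on all of $\mathbb{R}^2$, hence $u^+\in L^\infty(\mathbb{R}^2)$. I do not anticipate a genuine obstacle here: the argument is a direct packaging of the already-established small-energy regularity lemma together with the finiteness of the total energy, and it does not require the more delicate machinery (bubble classification, neck analysis, Pohozaev constants) developed later in the paper. The only conceptual point to notice is that the conformal invariance of the system is already built into Lemma \ref{lem:small-energy-regul} via the appropriate scaling, so translation suffices at this stage.
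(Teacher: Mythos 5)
Your proof is correct and is essentially the paper's own argument: both use finiteness of $E(u,\psi;\mathbb{R}^2)$ to obtain small energy on unit balls far from the origin, invoke the small energy regularity Lemma \ref{lem:small-energy-regul} (translated to those balls) to get a uniform upper bound on $u^+$ there, and rely on smoothness of $u$ on the remaining compact set. The only cosmetic difference is that the paper explicitly traces through the proof of Lemma \ref{lem:small-energy-regul} to display the dependence $\max_{B_{1/2}(y)} u \le C(1+\|u^+\|_{L^1(B_1(y))})$, whereas you invoke the stated conclusion (1) directly and note that the constant is independent of the center; both are valid.
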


\begin{proof}
 Since $E(u,\psi; \mathbb{R}^2) <\infty$, for any $\epsilon>0$, there exists $R=R(\epsilon)>0$ such that for any $|y|\geq R$, there holds $E(u,\psi ;B_1(y))\leq \epsilon$. By the proof of Lemma \ref{lem:small-energy-regul}, we get that $$\max_{B_{\frac{1}{2}}(y)}u\leq C(1+\|u^+\|_{L^1(B_1(y))})\leq C(1+E(u,\psi ;\mathbb{R}^2)),$$ where $C$ is independent of $y$, which implies that  $u^+ \in L^{\infty} (\mathbb{R}^2)$.
\end{proof}

\

Next, we will combine the potential analysis with Pohozaev type identity to calculate the bubble's energy $$\alpha := \int_{\mathbb{R}^2} \left(2e^{2u} - e^u |\psi|^2 \right)dx.$$

\begin{prop}\label{prop:01}
  Let $(u,\psi)$ be a solution to (\ref{bubble-eq}) with finite energy $E(u,\psi;\mathbb{R}^2) <\infty$. Then we have
  \begin{align*}
    u(x) = -\frac{\alpha}{2\pi} \log |x| + O(1),\ \  \nabla u(x)=-\frac{\alpha}{2\pi}\frac{x}{|x|^2} +O(|x|^{-1-\epsilon})\ \  near\ \  \infty,
    \end{align*} where $\alpha=4\pi$.

\end{prop}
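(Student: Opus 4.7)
The plan is to reduce the behavior at infinity to the local removable-singularity theory of Section~\ref{sec:removability of local singularities} via a Kelvin inversion, and then to pin down $\alpha$ using the Pohozaev identity of Lemma~\ref{Pohozaev for Liouville}.

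First, using the conformal invariance of \eqref{bubble-eq} (which holds because $F\equiv \mu$ is constant), I would apply the inversion $y = x/|x|^2$ and define $\hat u(y) := u(y/|y|^2) - 2\log|y|$ together with the correspondingly rescaled spinor $\hat\psi$ (so that $|\hat\psi(y)| = |y|^{-1}|\psi(y/|y|^2)|$, the precise spinor formula coming from the conformal covariance of $\slashed{D}$). A change of variables shows that $(\hat u,\hat\psi)$ is a smooth solution of the same bubble system on $\mathbb{R}^2\setminus\{0\}$, and the conformal invariance of the energy gives $\int_{B_1}(e^{2\hat u}+|\hat\psi|^4)\,dy=\int_{\mathbb{R}^2\setminus B_1}(e^{2u}+|\psi|^4)\,dx<\infty$.

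Second, Proposition~\ref{Removability of local singularity} applied to $(\hat u,\hat\psi)$ on $B_1\setminus\{0\}$ yields a constant $\tilde\gamma<2\pi$ and some $\epsilon>0$ such that near $y=0$,
\[
\hat u(y)=-\tfrac{\tilde\gamma}{2\pi}\log|y|+h(y),\qquad \nabla\hat u(y)=-\tfrac{\tilde\gamma}{2\pi}\,\tfrac{y}{|y|^2}+O(|y|^{-1+\epsilon}),
\]
with $h$ H\"older continuous. Translating via $|y|=1/|x|$ and the chain rule (with Jacobian $\partial y_i/\partial x_j=\delta_{ij}/|x|^2-2x_ix_j/|x|^4$) converts these into $u(x)=-\beta\log|x|+O(1)$ and $\nabla u(x)=-\beta\,x/|x|^2+O(|x|^{-1-\epsilon})$ with $\beta=(4\pi-\tilde\gamma)/(2\pi)$. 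Integrating $-\Delta u=2e^{2u}-e^u|\psi|^2$ on $B_R$ and letting $R\to\infty$, the flux $-\int_{\partial B_R}\partial_\nu u$ converges to $2\pi\beta$ by the gradient asymptotic, which identifies $\beta=\alpha/(2\pi)$ and thus proves the first two displayed formulas.

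Third, to obtain $\alpha=4\pi$, I would apply Lemma~\ref{Pohozaev for Liouville} (with $K_g\equiv 0$ and $\nabla F\equiv 0$) on $B_R$ and let $R\to\infty$. The gradient asymptotic gives $|\partial_\nu u|^2-\tfrac12|\nabla u|^2=\tfrac12|\partial_\nu u|^2+O(|x|^{-2-\epsilon})$ on $\partial B_R$, so the left-hand side tends to $\alpha^2/(4\pi)$, while the volume term tends to $\alpha$. The boundary contributions $R\int_{\partial B_R}e^{2u}$, $R\int_{\partial B_R}|\psi|^4$ and the spinor flux $\int_{\partial B_R}\langle x\cdot\psi,\partial_\nu\psi\rangle$ all vanish in the limit, using the pointwise decays $e^{2u}\le C|x|^{-\alpha/\pi}$, $|\psi(x)|\le C|x|^{-1/2-\epsilon}$ and $|\nabla\psi(x)|\le C|x|^{-3/2-\epsilon}$. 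This forces $\alpha^2/(4\pi)=\alpha$; since the bubble is a nontrivial super-Liouville type solution, $\alpha\neq 0$, hence $\alpha=4\pi$. The main obstacle is establishing these sharp pointwise decays of $\psi$ and $\nabla\psi$ at infinity: they come from applying the inner spinor estimates to $(\hat u,\hat\psi)$ on small balls around $0$, whose oscillation hypothesis $\mathrm{osc}_{B_{|y|/2}(y)}\hat u\le C$ is exactly what the logarithmic asymptotic of the second step guarantees, and then transferring the resulting bounds back through the Kelvin inversion.
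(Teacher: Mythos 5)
Your proof is correct and follows the same overall route as the paper: Kelvin inversion, Proposition~\ref{Removability of local singularity} for the local asymptotics at the origin of the inverted solution, transfer back to get the asymptotics at infinity, and then the Pohozaev identity of Lemma~\ref{Pohozaev for Liouville} (with $K_g\equiv 0$, $\nabla F\equiv 0$) to force $\alpha^2/(4\pi)=\alpha$. The one genuine variation is how you identify the logarithmic coefficient with $\alpha$. The paper first constructs the Newtonian potential $v(x)=-\frac{1}{2\pi}\int_{\R^2}(\log|x-y|-\log(|y|+1))f(y)\,dy$, shows $u-v$ is harmonic and, using Lemma~\ref{lem:03} and Liouville's theorem, that it is constant, which yields $\lim_{|x|\to\infty}u(x)/\log|x|=-\alpha/(2\pi)$; the Kelvin transform then closes the loop via $\alpha+\gamma=4\pi$. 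You instead go through the Kelvin transform first and use a divergence-theorem flux argument, $-\int_{\partial B_R}\partial_\nu u=\int_{B_R}(2e^{2u}-e^u|\psi|^2)\to\alpha$, combined with the gradient asymptotic coming from Proposition~\ref{Removability of local singularity}, to conclude $\alpha=2\pi\beta$. Your route is a bit more streamlined since it avoids constructing the explicit potential and invoking Liouville's theorem; the paper's route has the (small) advantage of producing the global asymptotic $\lim u/\log|x|=-\alpha/(2\pi)$ before touching the Kelvin transform. One small remark: rather than arguing ``$\alpha\neq 0$ since the bubble is nontrivial,'' it is cleaner to observe, as the paper does implicitly, that $\tilde\gamma<2\pi$ already forces $\beta>1$, hence $\alpha=2\pi\beta>2\pi>0$, which rules out the trivial root of $\alpha^2/(4\pi)=\alpha$ unconditionally.
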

\begin{proof}

  Let $f:= 2e^{2u} - e^u |\psi|^2$ and
  \begin{equation*}
    v(x) := -\frac{1}{2\pi} \int_{\mathbb{R}^2} \left(
      \log |x-y| - \log(|y|+1)
    \right)f(y)dy.
  \end{equation*}
It is clear that $-\Delta v = f$
  and \begin{equation*}
    \lim_{|x|\rightarrow \infty} \frac{v}{\log |x|} = -\frac{\alpha}{2\pi}.
  \end{equation*}
  Considering $w = u-v$ we have $\Delta w = 0$.
Lemma \ref{lem:03} tells us that  $u$ is bounded from above. Then we have
  \begin{equation*}
    w\le  C+C\log(1+|x|).
  \end{equation*}
  By Liouville theorem, $w$ is a constant on $\mathbb{R}^2$.
  Then \begin{equation}\label{equat:21}
    \lim_{x\rightarrow \infty} \frac{u(x)}{\log |x|} = \lim_{x\rightarrow \infty} \frac{v(x)}{\log |x|} = -\frac{\alpha}{2\pi}.
   \end{equation}

   Considering the Kelvin transform $(u_1,\psi_1)$ of $(u,\psi)$, i.e. $$u_1(x):=u(\frac{x}{|x|^2})-2\ln |x|,\ \ \psi_1(x):=|x|^{-1}\psi(\frac{x}{|x|^2}),$$ we have
   \begin{equation*}
    \begin{cases}
      -\Delta u_1 &= 2e^{2u_1} - e^{u_1} |\psi_1|^2 ,\\
      \slashed{D}\psi_1 &= -e^{u_1}\psi_1 - 2\mu|\psi_1|^2\psi_1,
    \end{cases}\ \ x \in B_1 \setminus \{0\}.
   \end{equation*}

    By Proposition \ref{Removability of local singularity},
    we have there exist two constants $\gamma<2\pi $ and $\epsilon\in (0,\frac{1}{2})$ such that
    \begin{equation*}
      u_1(x) = -\frac{\gamma}{2\pi} \log |x| + h(x),\ \ \nabla u_1(x)=-\frac{\gamma}{2\pi}\frac{x}{|x|^2}+O(|x|^{-1+\epsilon})\ \ near\ \ 0,
    \end{equation*} and
       \begin{equation}\label{equat:22}
      |\psi_1(x)| \le C |x|^{-\frac{1}{2}+\delta},\ \
        |\nabla \psi_1(x)| \le C |x|^{-\frac{3}{2}+\delta }\ \ near\ \ 0,
    \end{equation} where $h(x)$ is continues near $0$.
    From the fact that
    \begin{equation*}
      u(x) = u_1(x/|x|^2) - 2\log |x|=\left(
        \frac{\gamma}{2\pi}-2
      \right)\log|x| + h(\frac{x}{|x|^2})=\left(
        \frac{\gamma}{2\pi}-2
      \right)\log|x| + O(1) \ \ near\  \ \infty,
    \end{equation*} by \eqref{equat:21},
    we get \begin{equation*}
      \alpha + \gamma = 4\pi.
    \end{equation*} Thus, we have $$ u(x) = -\frac{\alpha}{2\pi} \log |x| + O(1),\ \   near\ \  \infty,$$ where $\alpha=4\pi-\gamma>2\pi$.

    By a direct computation, we have \begin{equation*}
      \begin{aligned}
        (\nabla u) (x) = -(\nabla u_1) (\frac{x}{|x|^2}) \cdot \frac{1}{|x|^2} - \frac{2}{|x|^2} x\ \
        &=(\frac{\gamma}{2\pi}-2) \frac{x}{|x|^2} +O(|x|^{-1-\epsilon})\\&= -\frac{\alpha}{2\pi}\frac{x}{|x|^2} +O(|x|^{-1-\epsilon})\ \ near\ \ \infty.
      \end{aligned}
    \end{equation*}

 \

Next, we \textbf{claim} $\alpha=4\pi$.

 \

 From \eqref{equat:22}, we have $$|\psi(x)| \le C |x|^{-\frac{1}{2}-\delta},\ \
        |\nabla \psi(x)| \le C |x|^{-\frac{3}{2}-\delta }\ \ near\ \ \infty.$$
Using above estimates, we have
\begin{align*}
    R\int_{\partial B_R}\left(\left|
      \frac{\partial u}{\partial \nu}
    \right|^2 - \frac{1}{2}|\nabla u|^2\right)d\theta &= \frac{\alpha^2}{4\pi}+o(R),\\
    R\int_{\partial B_R}\left(e^{2u}+\mu |\psi|^4\right)d\theta &\le CR\int_{\partial B_R}\left(|x|^{-\alpha/\pi} + |x|^{-2-4\delta}\right)d\theta=o(R),\\
    \left|\int_{\partial B_R}\left\langle
      x\cdot\psi,\frac{\partial\psi}{\partial \nu}
    \right\rangle d\theta\right| &\le CR\int_{\partial B_R} |\psi||\nabla\psi|d\theta\le CR\int_{\partial B_R} |x|^{-\frac{1}{2}-\delta}|x|^{-\frac{3}{2}-\delta}d\theta=o(R),
  \end{align*} where $\lim_{R\to \infty}o(R)=0$.

By Pohozaev type identity for bubble's equation that
    \begin{align*}
      R\int _{\partial B_R} \left(\left|
      \frac{\partial u}{\partial \nu}
    \right|^2 - \frac{1}{2}|\nabla u|^2\right)d\theta
    &=\int _{B_R} \left(2e^{2u} - e^u|\psi|^2 \right)dx
    - R\int_{\partial B_R}\left( e^{2u} + \mu |\psi|^4 \right)d\theta \\
    &
    \quad+ \frac{1}{2}\int_{\partial B_R} \left(\left<
      x\cdot \psi,\frac{\partial \psi}{\partial \nu}
    \right>+\left<
      \frac{\partial \psi}{\partial \nu},x\cdot \psi
    \right>\right)d\theta,
    \end{align*}
letting $R\to \infty$, we get
  \begin{equation*}
    \frac{\alpha^2}{4\pi} = \alpha,
  \end{equation*} which implies $\alpha=4\pi$.
  \end{proof}

\

By above proposition, we have the following statement for the removability of singularity at infinity.
\begin{cor}\label{removability at infinity}
  Let $(u,\psi)$ be a smooth solution to \eqref{bubble-eq} with finite energy $E(u,\psi;\mathbb{R}^2)<\infty$.
  Then $(u,\psi)$ can be extend to a smooth solution of \eqref{equat:10} on $\mathbb{S}^2$, i.e. the singularity at infinity is removable.
\end{cor}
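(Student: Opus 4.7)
The plan is to reduce the removability of the singularity at infinity on $\mathbb{R}^2$ to the removability of an interior singularity at $0$, via the Kelvin transform already used in the proof of Proposition \ref{prop:01}, and then invoke the iff characterization in Proposition \ref{Removability of local singularity}. Conformal invariance of the functional $L$ will then allow me to repackage the resulting smooth solution on $B_1(0)$ as a smooth extension of $(u,\psi)$ across the north pole $N$ of $S^2$.

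In concrete terms, I first recall that
\[
u_1(x) := u\!\left(\frac{x}{|x|^2}\right) - 2\log|x|, \qquad \psi_1(x) := |x|^{-1}\psi\!\left(\frac{x}{|x|^2}\right)
\]
gives a smooth solution of (\ref{bubble-eq}) on $B_1(0)\setminus\{0\}$, with finite energy inherited by conformal invariance of $E$. Next, Proposition \ref{prop:01} yields $\alpha = 4\pi$, so $u(x) = -2\log|x| + O(1)$ near infinity, and substituting into the Kelvin transform gives
\[
u_1(x) = -2\log\!\left|\frac{x}{|x|^2}\right| + O(1) - 2\log|x| = O(1) \quad\text{as } x\to 0.
\]
Consequently the constant $\gamma$ appearing in the expansion $u_1(x) = -\frac{\gamma}{2\pi}\log|x| + h(x)$ from Proposition \ref{Removability of local singularity} must vanish, so the Pohozaev type constant satisfies $C(u_1,\psi_1) = \gamma^2/(4\pi) = 0$. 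By the iff part of Proposition \ref{Removability of local singularity}, the singularity at $0$ is removable and $(u_1,\psi_1)$ extends smoothly to all of $B_1(0)$. Identifying $\mathbb{R}^2$ with $S^2\setminus\{N\}$ via stereographic projection and using conformal invariance of $L$ then exhibits $(u,\psi)$ as a smooth solution of (\ref{equat:10}) with coefficient $\mu$ and $K_{g_{S^2}} = 1$ on $S^2 \setminus \{N\}$; the Kelvin transform reads off the behavior in a chart around $N$, so smoothness of $(u_1,\psi_1)$ at $0$ translates directly into smoothness of the extension at $N$.

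The main obstacle is the boundedness $u_1 = O(1)$ near $0$; this is where the exact value $\alpha = 4\pi$ from Proposition \ref{prop:01} is indispensable. Had any $\alpha \neq 4\pi$ been possible, the logarithmic term in $u_1$ would have been nontrivial and the Pohozaev constant would have been strictly positive, blocking removability. A minor secondary point is checking that the spinorial Kelvin transform is consistent with the spin structure on $S^2$ under stereographic projection; this is the standard conformal covariance of the Dirac operator together with the weighted factor $|x|^{-1}$ already built into $\psi_1$, and requires no new analytic input beyond what is used in Propositions \ref{prop:01} and \ref{Removability of local singularity}.
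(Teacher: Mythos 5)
Your argument is correct and follows essentially the same route as the paper: both push the singularity at infinity to a finite point by a conformal transformation, then exploit the asymptotics $u = -2\log|x| + O(1)$ coming from Proposition \ref{prop:01} ($\alpha = 4\pi$) to see that the transformed function part is bounded near that point, and conclude smoothness by elliptic theory. The only cosmetic difference is that you detour through the Kelvin transform and the Pohozaev-constant criterion of Proposition \ref{Removability of local singularity} (which is a slight redundancy, since $\gamma = 0$ is already established inside the proof of Proposition \ref{prop:01}), whereas the paper goes directly to the stereographic projection onto $\mathbb{S}^2$ and reads off boundedness of $v$ near $N$; these are the same computation in a different coordinate chart.
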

\begin{proof}
  Considering the stereographic projection
\begin{align*}
  f:(\mathbb{S}^2\setminus \{N\},g_{\mathbb{S}^2} = d\theta^2 + \sin^2 \theta d\varphi^2 )&\rightarrow (\mathbb{R}^2,g_{\mathbb{R}^2} = dx^2 + dy^2)\\
  (\theta,\varphi )&\mapsto \left(
    \frac{\sin \theta \cos \varphi}{1-\cos \theta}, \frac{\sin \theta \sin \varphi}{1-\cos \theta}
  \right)
\end{align*}
It is clear that
\begin{equation*}
  f^*(g_{\mathbb{R}^2}) = \left(d
    \frac{\sin \theta \cos \varphi}{1-\cos \theta}
  \right)^2 + \left(
    d\frac{\sin \theta \sin \varphi}{1-\cos \theta}
  \right)^2 = \frac{g_{\mathbb{S}^2}}{(1-\cos \theta )^2}:=\lambda^2 g_{\mathbb{S}^2}.
\end{equation*}
Define that
\begin{equation*}
    v(\theta,\varphi) = u\circ f(\theta,\varphi) + \log \lambda, \ \
    \phi(\theta,\varphi) = \lambda^{\frac{1}{2}}\psi\circ f(\theta,\varphi),
\end{equation*}
which satisfy the following equations
\begin{equation*}
  \begin{cases}
    -\Delta v &= 2e^{2v} - e^v |\phi|^2-1, \\
      \slashed{D}\phi &= -e^v\phi - 2\mu|\phi|^2\phi,
  \end{cases} \ \ on \ \ \mathbb{S}^2 \setminus \{N\}.
\end{equation*}
For  $u = -2\log |x| + O(1)$ as $x$ near infinity, we have
  \begin{align*}
    v(\theta,\varphi) &= u\left(
    \frac{\sin \theta \cos \varphi}{1-\cos \theta}, \frac{\sin \theta \sin \varphi}{1-\cos \theta}
  \right)+ \log \frac{1}{1-\cos \theta}\\
    &=-2\log \frac{|\sin \theta|}{1-\cos\theta}+ \log \frac{1}{1-\cos \theta} + O(1)\\
    &= \log\frac{1}{1+\cos\theta}+O(1)\\
    &=O(1), \ \ as \ \ \theta \rightarrow 0.
  \end{align*}
Thus $v$ is bounded near $\{N\}\in \mathbb{S}^2$.
From the standard elliptic establish we can claim that $(v,\phi)$ is regular on $\mathbb{S}^2$.
\end{proof}

\

  \section{Energy identity for the spinor}\label{sec:energy-identity-spinor}

  \

  In this section we will prove energy identity for the spinor, i.e. Theorem \ref{thm:main-2}, which is important in later proof, such as local masses and energy identity for the function.

  \

  We first show the following elliptic estimate for spinor's equation.
  \begin{lem}\label{lem:07}
    let $(u,\psi)$ be a smooth solution of \eqref{equat:01} on the annulus $A_{r_1,r_2}=\{x\in \mathbb{R}^2:r_1\le |x| \le r_2\}$,where $0<r_1 <2r_1 <r_2 /2 <r_2<1$.
    Then we have
      \begin{align}\label{esti-psi-annulus}
        &\left(
        \int_{A_{2r_1,\frac{r_2}{2}}} |\nabla \psi|^{\frac{4}{3}}dx
      \right)^{\frac{3}{4}}
      +
      \left(
        \int_{A_{2r_1,\frac{r_2}{2}}} |\psi|^4dx
      \right)^{\frac{1}{4}}\notag\\
      &\le \Lambda \left(
        \int_{A_{r_1,r_2}} (e^{2u} + |\psi|^4)dx
      \right)^{\frac{1}{2}}
      \left(
        \int_{A_{r_1,r_2}} |\psi|^4dx
      \right)^{\frac{1}{4}}
      +
      \left(
      C\int_{A_{r_1,2r_1}} |\psi|^4dx
      \right)^{\frac{1}{4}}
      +C\left(
      \int_{A_{\frac{r_2}{2},r_2}} |\psi|^4dx
      \right)^{\frac{1}{4}},
      \end{align}
  \end{lem}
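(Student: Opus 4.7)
The plan is to localize the spinor equation with a scalar cut-off and then apply the $L^{4/3}$ Calderón--Zygmund estimate for the Dirac operator, combined with the 2D Sobolev embedding $W^{1,4/3}\hookrightarrow L^4$. Concretely, I would fix $\eta\in C_c^\infty(A_{r_1,r_2})$ with $0\le\eta\le 1$, $\eta\equiv 1$ on $A_{2r_1,r_2/2}$, $|\nabla\eta|\le C/r_1$ on $A_{r_1,2r_1}$, and $|\nabla\eta|\le C/r_2$ on $A_{r_2/2,r_2}$. That we are in two real dimensions is crucial: these dyadic widths yield the scale-invariant bound
$$\|\nabla\eta\|_{L^2(A_{r_1,2r_1})}+\|\nabla\eta\|_{L^2(A_{r_2/2,r_2})}\le C,$$
which is precisely what makes the two cut-off error constants on the RHS of the lemma independent of $r_1$ and $r_2$.

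Using the spinor equation from \eqref{equat:01}, I would compute
$$\slashed{D}(\eta\psi)=\nabla\eta\cdot\psi+\eta\,\slashed{D}\psi=\nabla\eta\cdot\psi-e^u\,\eta\psi-2F\,\eta|\psi|^2\psi.$$
Since $\eta\psi$ is compactly supported in $\mathbb{R}^2$, the Calderón--Zygmund bound $\|\nabla(\eta\psi)\|_{L^{4/3}}\le C\|\slashed{D}(\eta\psi)\|_{L^{4/3}}$ combined with Gagliardo--Nirenberg--Sobolev $\|\eta\psi\|_{L^4}\le C\|\nabla(\eta\psi)\|_{L^{4/3}}$ gives
$$\|\eta\psi\|_{L^4(\mathbb{R}^2)}+\|\nabla(\eta\psi)\|_{L^{4/3}(\mathbb{R}^2)}\le C\|\slashed{D}(\eta\psi)\|_{L^{4/3}(\mathbb{R}^2)},$$
with no zeroth-order term on the right. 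Because $\eta\equiv 1$ on $A_{2r_1,r_2/2}$, the left side dominates the left side of the lemma.

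It then remains to split $\slashed{D}(\eta\psi)$ into its three pieces and apply H\"older's inequality with the exponent split $\tfrac12+\tfrac14=\tfrac34$. For the zeroth-order terms,
$$\|(e^u+2F|\psi|^2)\eta\psi\|_{L^{4/3}(A_{r_1,r_2})}\le\bigl(\|e^u\|_{L^2(A_{r_1,r_2})}+2\|F\|_\infty\|\psi\|_{L^4(A_{r_1,r_2})}^2\bigr)\|\eta\psi\|_{L^4},$$
and $0\le\eta\le 1$ lets me replace $\|\eta\psi\|_{L^4}$ by $\|\psi\|_{L^4(A_{r_1,r_2})}$; the elementary inequality $a+b^2\le\sqrt{2}\,(a^2+b^4)^{1/2}$ with $a=\|e^u\|_{L^2}$, $b=\|\psi\|_{L^4}$ then produces the first RHS term for a suitable constant $\Lambda$. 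For the cut-off error, H\"older on each annular shell where $\nabla\eta\ne 0$ gives
$$\|\nabla\eta\cdot\psi\|_{L^{4/3}(A_{r_1,2r_1})}\le\|\nabla\eta\|_{L^2(A_{r_1,2r_1})}\|\psi\|_{L^4(A_{r_1,2r_1})}\le C\|\psi\|_{L^4(A_{r_1,2r_1})},$$
and symmetrically on $A_{r_2/2,r_2}$, yielding the two remaining RHS terms.

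Rather than a genuine obstacle, the only point requiring care is to obtain the Dirac elliptic estimate without a $\|\eta\psi\|_{L^{4/3}}$ correction on the right, as such a term would contaminate the lemma with an uncontrolled $C\|\psi\|_{L^4(A_{r_1,r_2})}$ on the RHS; this is accomplished by exploiting the compact support of $\eta\psi$ and using the Sobolev embedding in its homogeneous Gagliardo--Nirenberg form. The other subtlety, ensuring the $\nabla\eta$ contribution sees only the two boundary shells with a constant independent of $r_1,r_2$, is what is driven by the 2D scaling noted in the first paragraph.
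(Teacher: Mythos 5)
Your proof is correct and follows exactly the standard route that the paper delegates to Lemma 3.1 of \cite{Jost-Wang-Zhou-Zhu-1}: dyadic cut-off, the first-order Calder\'on--Zygmund estimate $\norm{\nabla(\eta\psi)}_{L^{4/3}}\le C\norm{\slashed{D}(\eta\psi)}_{L^{4/3}}$ for compactly supported spinors on $\R^2$, the homogeneous Sobolev embedding $W^{1,4/3}\hookrightarrow L^4$ in dimension two, and H\"older with exponents $\tfrac12+\tfrac14=\tfrac34$. The two refinements you flag explicitly --- that the dyadic geometry of the shells makes $\norm{\nabla\eta}_{L^2}$ a scale-free constant, and that compact support lets you drop the lower-order term from the Dirac estimate so no uncontrolled $\norm{\psi}_{L^4(A_{r_1,r_2})}$ appears on the right --- are precisely the points that make the lemma useful with constants independent of $r_1,r_2$, and you handle both correctly.
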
 where $\Lambda>0,\ C>0$ are two universal constants.
\begin{proof}
The proof is standard by using elliptic estimates of Dirac operator. See Lemma 3.1 in \cite{Jost-Wang-Zhou-Zhu-1}.
\end{proof}

\

Now, we prove energy identity for the spinor, i.e. Theorem \ref{thm:main-2}. Since $\Sigma$ is finite, we  choose $\{\delta_i\}$  small such that
  $x_i\in D_{\delta_i}(x_i)$ and $D_{\delta_i}(x_i)\cap D_{\delta_j}(x_j) = \emptyset$ as $i\neq j$. By Lemma \ref{lem:small-energy-regul}, we know that $\psi_n$ converges to $\psi$ in $L_{loc}^\infty(M\setminus \Sigma)$.
  It is clear that the energy identity is equivalent to
  \begin{equation*}
    \lim_{\delta\rightarrow 0}\lim_{n\rightarrow\infty}\int_{D_{\delta}(x_i)} |\psi_n|^4 dx= \sum_{k=1}^{L_i} \int_{S^2} |\xi^{i,k}|^4dx
  \end{equation*}
  where $\xi^{i,k}$ are bubbles.

\

We start the proof by constructing  the first bubble at  a blow-up point $p\in\Sigma$. Suppose $p$ is the only blow-up point in $B_\delta(p)$. We have to consider following two cases according to different singularities.

\

\textbf{Case (i):} $p\in\Sigma$ is a first type singular point.

\

In this case, we define
  \begin{align*}
   u_n (x_n):=\max_{x\in B_\delta(p) }\{ u_n (x)\},\  \
    \lambda_n := e^{-u_n(x_n)}.
  \end{align*}
  it is clear that $x_n \rightarrow p$ and $\lambda_n\rightarrow 0$ as $n\rightarrow \infty$.

  Setting
  \begin{equation*}
      \tilde{u}_n(x) = u_n (\lambda_n x+ x_n) + \log\lambda_n,\ \
      \tilde{\psi}_n(x) = \lambda_n^{\frac{1}{2}} \psi_n (\lambda_n x + x_n),
  \end{equation*}
  then
  \begin{equation*}
    \begin{cases}
    -\Delta \tilde{u}_n (x) &= 2e^{2\tilde{u}_n(x)} - e^{\tilde{u}_n(x)} |\tilde{\psi}_n(x)|^2 ,\\
      \slashed{D}\tilde{\psi}_n(x) &= -e^{\tilde{u}_n(x)}\tilde{\psi}_n(x) - 2F( \lambda_n x + x_n) |\tilde{\psi}_n(x)|^2\tilde{\psi}_n(x),
    \end{cases} \ \ x\in B_{\frac{\delta}{2\lambda_n}}(0),
  \end{equation*}
  with finite energy condition
  \begin{equation*}
    \int_{B_\frac{\delta}{2\lambda_n}(0)}( e^{2\tilde{u}_n(x)} + |\tilde{\psi}_n|^4 )dx< C.
  \end{equation*}

  From the definition of the first type singularity, we know that passing to a subsequence, there exists a universal constant $C>0$ such that $$u_n(x_n)-2\ln (1+\max_{B_{r_0}(p)}|\psi_n(x)|)\geq -C,\ \ \forall\ n.$$ Then it is easy to see that for any $R>0$, there holds
  \begin{align*}
   \tilde{u}_n(0)=0,\  \tilde{u}_n(x)\leq 0,\ \     |\tilde{\psi}_n(x)|^2\leq e^{-u_n(x_n)+2\ln \max_{B_{r_0}(p)}|\psi_n(x)|}\leq C,\ \  x\in B_R(0).
  \end{align*}

  By standard elliptic estimate,
  $(\tilde{u}_n,\tilde{\psi}_n)$ sub-converges to $(\tilde{u},\tilde{\psi})$  in $C^2_{loc}(\mathbb{R}^2)$ where
  \begin{equation*}
    \begin{cases}
      -\Delta \tilde{u} (x) &= 2e^{2\tilde{u}(x)} - e^{\tilde{u}(x)} |\tilde{\psi}(x)|^2 ,\\
      \slashed{D}\tilde{\psi}(x) &= -e^{\tilde{u}(x)}\tilde{\psi}(x) - 2\mu |\tilde{\psi}(x)|^2\tilde{\psi}(x),
    \end{cases}\ \ x\in\R^2.
  \end{equation*}

From the Corollary \ref{removability at infinity}, we get the first bubble at  blow up point $p$ which is a super-Liouville type bubble.

\

\textbf{Case (ii):} $p\in\Sigma$ is a second type singular point.

\

In this case, we define
  \begin{align*}
   |\psi_n (x_n)|:=\max_{x\in B_\delta(p) }\{ |\psi_n (x)|\},\  \
    \lambda_n := |\psi_n(x_n)|^{-2}.
  \end{align*}
  it is clear that $x_n \rightarrow p$ and $\lambda_n\rightarrow 0$ as $n\rightarrow \infty$.

Set $(\tilde{u}_n,\tilde{\psi}_n)$ as in Case $(i)$. Since $p\in\Sigma$ is of second type singularity, passing to a subsequence, we have $$\max_{B_{r_0}(p)}u_n(x)-2\ln (1+|\psi_n(x_n)|)\to -\infty,\ \ as\ \ n\to\infty.$$ It is easy to see that $$ |\tilde{\psi}_n(0)|=1,\ \ |\tilde{\psi}_n(x)|\leq 1,\ \ x\in B_R(0)$$ and $$ \max_{x\in B_R(0)}\tilde{u}_n(x)\leq \max_{B_{r_0}(p)}u_n(x)-2\ln |\psi_n(x_n)|\to -\infty,\ \ as\  \ n\to\infty.$$
By standard elliptic estimate, passing to a subsequence, we know that $\tilde{u}_n\to -\infty$ uniformly in any compact subset of $\R^2$ and
  $\tilde{\psi}_n$ converges to $\tilde{\psi}$  in $C^2_{loc}(\mathbb{R}^2)$ where
  \begin{equation}\label{equat:20}
      \slashed{D}\tilde{\psi}(x) =  - 2\mu |\tilde{\psi}(x)|^2\tilde{\psi}(x),\ \ x\in\R^2.
  \end{equation}
  By the conformal invariance of \eqref{equat:20} and removable singularity theorem (see Theorem 5.1 in \cite{Jost-Liu-Zhu}), we get the first bubble at blow-up point $p$ which is a spinorial Yamabe type bubble.

  \

Similar to the argument for the energy identity of harmonic maps, e.g. \cite{DingWeiyueandTiangang}, we may assume that there is only one bubble at each blow up point $p\in\Sigma$.  Then the energy identity is equivalent to prove
  \begin{equation}\label{equat:23}
    \lim_{R\rightarrow 0}\lim_{\delta \rightarrow 0}\lim_{n\rightarrow \infty} \int_{B_\delta(x_n)\setminus B_{\lambda_nR}(x_n)} |\psi_n|^4 dx = 0.
  \end{equation}

\

Under one bubble's assumption, we firstly have the following lemma.
\begin{lem}\label{lem:04}
  For any $\epsilon>0$, there exist $\delta_0=\delta_0(\epsilon)$, $R_0=R_0(\epsilon)$ such that for any $\delta\in (0,\delta_0)$, $R>R_0$, there exists $N=N(\epsilon,\delta,R)>0$ such that when $n\ge N$, there holds
\begin{equation*}
  \sup_{t\in [\lambda_nR,\delta]}\int_{B_{2t}(x_n)\setminus B_t(x_n)}( e^{2u_n} + |\psi_n|^4)dx <\epsilon,
\end{equation*} i.e.
\begin{equation}\label{equat:09}
 \lim_{R\rightarrow 0}\lim_{\delta \rightarrow 0}\lim_{n\rightarrow \infty} \sup_{t\in [\lambda_nR,\delta]}\int_{B_{2t}(x_n)\setminus B_t(x_n)}( e^{2u_n} + |\psi_n|^4)dx =0.
\end{equation}
\end{lem}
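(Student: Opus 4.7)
The plan is to argue by contradiction. Suppose \eqref{equat:09} fails; then there exist $\epsilon_0>0$ and, after passing to subsequences, $\delta_n \downarrow 0$, $R_n \uparrow \infty$, and radii $t_n \in [\lambda_n R_n, \delta_n]$ such that
\begin{equation*}
\int_{B_{2t_n}(x_n) \setminus B_{t_n}(x_n)} (e^{2u_n} + |\psi_n|^4)\,dx \geq \epsilon_0.
\end{equation*}
Since $t_n/\lambda_n \geq R_n \to \infty$ and $t_n \leq \delta_n \to 0$, the new scale $t_n$ strictly separates the first-bubble scale from the fixed ball size. Perform the conformal rescaling
\begin{equation*}
\hat{u}_n(x) := u_n(t_n x + x_n) + \log t_n, \qquad \hat{\psi}_n(x) := t_n^{1/2} \psi_n(t_n x + x_n),
\end{equation*}
so that $(\hat{u}_n,\hat{\psi}_n)$ solves a system of the form \eqref{equat:01} on $B_{\delta_n/t_n}(0)$ with $F$ replaced by $F(t_n x + x_n) \to \mu_i$ locally. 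Conformal invariance gives that the total energy stays bounded and
\begin{equation*}
\int_{B_2(0) \setminus B_1(0)} (e^{2\hat{u}_n} + |\hat{\psi}_n|^4)\,dx \geq \epsilon_0.
\end{equation*}

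Next, I apply the Brezis--Merle concentration result, i.e.\ Theorem \ref{thm:main-1}, to $(\hat{u}_n,\hat{\psi}_n)$ on an exhausting family of compact subsets of $\R^2 \setminus \{0\}$. The one-bubble reduction forbids any blow-up point of $(\hat{u}_n,\hat{\psi}_n)$ in a compact subset of $\R^2 \setminus \{0\}$: such a point would, after pulling back to $(u_n,\psi_n)$, yield a concentration at $x_i$ at an intermediate scale between $\lambda_n$ and $\delta_n$, hence a distinct second bubble, contradicting the assumption. Lemma \ref{lem:small-energy-regul} then yields, along a subsequence, convergence $\hat{\psi}_n \to \hat{\psi}_\infty$ in $C^2_{loc}(\R^2 \setminus \{0\})$ together with either $\hat{u}_n \to \hat{u}_\infty$ in $C^2_{loc}(\R^2 \setminus \{0\})$ or $\hat{u}_n \to -\infty$ uniformly on compacts.

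The next step is to exclude a non-trivial limit on $B_2 \setminus B_1$. If the limit were non-trivial there, I would pick $y_n \in B_2 \setminus B_1$ where the relevant quantity (either $\hat{u}_n$ or $|\hat{\psi}_n|$) is large and rescale once more at $y_n$ by the appropriate scale; the resulting limit, using Proposition \ref{Removability of local singularity} at $0$ and Corollary \ref{removability at infinity} at $\infty$, would extend to a genuine bubble on $S^2$ solving \eqref{equat:10}, namely a second bubble concentrating at $x_i$ at a scale comparable to $t_n$ in the original coordinates. This contradicts the single-bubble reduction. Hence the limit must be trivial on $B_2 \setminus B_1$, and the small-energy regularity of Lemma \ref{lem:small-energy-regul} forces $e^{2\hat{u}_n} + |\hat{\psi}_n|^4 \to 0$ uniformly on $B_2 \setminus B_1$. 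Integration gives
\begin{equation*}
\int_{B_2 \setminus B_1} (e^{2\hat{u}_n} + |\hat{\psi}_n|^4)\,dx \to 0,
\end{equation*}
contradicting the lower bound $\epsilon_0$.

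The main obstacle is to execute the bubble-extraction step cleanly in both the first-type and second-type singularity cases: when $\hat{u}_n \to -\infty$ the limit reduces to a nonlinear Dirac equation on $\R^2 \setminus \{0\}$ and one needs removable singularity at $0$ and at $\infty$ for the spinorial Yamabe equation (as in Section \ref{sec:energy-identity-spinor}); when $\hat{u}_n$ stays finite one combines Proposition \ref{Removability of local singularity} with Corollary \ref{removability at infinity} to upgrade the rescaled limit to an $S^2$--bubble of \eqref{equat:10}. Once this dichotomy is handled, the contradiction with the one-bubble assumption is immediate and \eqref{equat:09} follows.
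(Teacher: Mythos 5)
Your contradiction setup and the rescaling to $(\hat u_n,\hat\psi_n)$ at scale $t_n$ are correct and essentially the same as the paper's (the paper uses fixed $\delta, R$ plus its ``Fact 1/Fact 2'' where you use sequences $\delta_n\downarrow 0$, $R_n\uparrow\infty$; either is fine). Cases where a new concentration point appears, or where $\hat u_n\to -\infty$ and $\hat\psi_n$ converges to a nontrivial solution of the spinorial Yamabe equation on $\R^2\setminus\{0\}$, are also handled correctly by removable singularity and conformal invariance.

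However, there is a genuine gap in the case where $(\hat u_n,\hat\psi_n)\to(v,\phi)$ in $C^2_{loc}(\R^2\setminus\{0\})$ with a nontrivial, \emph{finite} limit. You propose to ``pick $y_n\in B_2\setminus B_1$ where the relevant quantity is large and rescale once more'' — but in this regime nothing is large: the convergence is smooth and locally uniform, so the iterated rescaling is vacuous and does not produce a bubble. What must actually be shown is that the limit $(v,\phi)$, which a priori has a conical singularity $v(x)=-\frac{\gamma}{2\pi}\log|x|+O(1)$ at the origin with some $\gamma<2\pi$, in fact has $\gamma=0$, so that the singularity is removable and $(v,\phi)$ becomes a genuine $S^2$-bubble contradicting the one-bubble assumption. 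The paper's proof does this via the Pohozaev constant: passing the identity $C(u_n,\psi_n)=0$ for the smooth solutions through the rescaling gives $C(v,\phi)=\gamma$, while Proposition \ref{Removability of local singularity} gives $C(v,\phi)=\gamma^2/(4\pi)$; combining these forces $\gamma=0$. This Pohozaev step is the technical heart of the lemma and is missing from your argument; without it, a nontrivial limit with $0<\gamma<2\pi$ is not excluded and the contradiction does not close.
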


\begin{proof}
  Similar to harmonic maps \cite{DingWeiyueandTiangang} or  super-Liouville equation \cite{Jost-Wang-Zhou-Zhu-1}, we first have following two facts.

\textbf{Fact 1:} For any $\epsilon>0$, there exists $R_0=R_0(\epsilon)$ such that for any $R>R_0$ and $T>100$, there exists $N(\epsilon,R)$ such that for any $n\ge N$, there holds
\begin{equation*}
  \int_{B_{\lambda_nRT}(x_n)\setminus B_{\lambda_nR}(x_n)} (e^{2u_n} +|\psi_n|^4)dx <\epsilon.
\end{equation*}

\

\textbf{Fact 2:} For any $\epsilon>0$, there exists $\delta_0=\delta_0(\epsilon)$ such that for any $\delta<\delta_0$ and $T>100$, there exists $N(\epsilon,\delta)$ such that for any $n\ge N$, there holds
\begin{equation*}
  \int_{B_{\delta}(x_n)\setminus B_{\delta T^{-1}}(x_n)} (e^{2u_n} +|\psi_n|^4)dx <\epsilon.
\end{equation*}

Now we prove the lemma by a  contradiction argument. If not, then there exist $\epsilon_0>0$, $\delta>0,\ R>100$ and sequence $t_n\in [\lambda_nR,\delta]$ such that \begin{equation*}
  \int_{B_{2t_n}(x_n)\setminus B_{t_n}(x_n)} (e^{2u_n} + |\psi_n|^4 )dx\ge \epsilon_0.
\end{equation*}
By above two facts, we have $$\lim_{n\to\infty}\frac{t_n}{\lambda_nR}=\lim_{n\to\infty}\frac{\delta}{t_n}=+\infty.$$ Denoting
$$(v_n(x), \phi_n(x)):=(u_n(x_n+t_nx)+\ln t_n, \sqrt{t_n}\psi_n(x_n+t_nx)),$$ then for all $n$, we have
\begin{equation}\label{inequ:07}
  \int_{B_2(0)\setminus B_1(0)} (e^{2v_n} + |\phi_n|^4 )dx\ge \epsilon_0.
\end{equation}
Moreover, for $\forall\ T>100$, when $n$ is big, $(v_n,\phi_n)$ satisfies
\begin{equation*}
  \begin{cases}
    -\Delta v_n = 2e^{2v_n} - e^{v_n}|\phi_n|^2,\\
    \slashed{D}\phi_n = -(e^{v_n} + 2F(x_n+t_nx)|\phi_n|^2)\phi_n,
  \end{cases}\ \ x \in \ \  B_{T}(0)\setminus B_{T^{-1}}(0).
\end{equation*}

\

There are three possible cases.

\

\textbf{Case 1:} There exists $T>0$ such that $(v_n,\phi_n)$ has a energy concentration point $q\in  B_{T}(0)\setminus B_{T^{-1}}(0)$, i.e.
\begin{eqnarray}
  \lim_{n\rightarrow \infty}\int_{B_r(q)}( e^{2v_n} + |\phi_n|^4 )dx\ge \epsilon_0>0,\ \ \forall \ r>0.
\end{eqnarray}

\

In this case, similar to construct the first bubble, we will get the second bubble, this is a contradiction.

\

If Case 1 is false, then for any $T>0$, $(v_n,\phi_n)$ has no energy concentration point in $ B_{T}(0)\setminus B_{T^{-1}}(0)$. By small energy regularity Lemma \ref{lem:small-energy-regul}, we have to consider following two cases.

\

\textbf{Case 2:}  $v_n \rightarrow -\infty$ uniformly in any compact subset of $\R^2\setminus \{0\}$.

\

In this case, we have $\phi_n\to \phi$ in $C^1_{loc}(\R^2\setminus \{0\})$  where
\begin{equation*}
  \slashed{D} \phi = 2\mu |\phi|^2 \phi,\ \ \ x\in \R^2\setminus \{0\}.
\end{equation*}
By \eqref{inequ:07}, we get
\begin{equation*}
  \int_{B_2(0)\setminus B_1(0)}  |\phi|^4 dx\ge \epsilon_0,\ \ \int_{\R^2}  |\phi|^4 dx\leq C.
\end{equation*}
By conformal invariance and removable singularity (Theorem 5.1 in \cite{Jost-Liu-Zhu}), we can see $\phi$ as a spinor from $S^2$, i.e.
\begin{equation*}
  \slashed{D} \phi = 2\mu |\phi|^2 \phi,\ \ \ x\in S^2.
\end{equation*} This is a contradiction since we get the second bubble.

\

\textbf{Case 3:} $(v_n,\phi_n)\to (v,\phi)$ in $C^2_{loc}(\R^2\setminus \{0\})$ and $(v,\phi)$ satisfies
\begin{equation*}
  \begin{cases}
  -\Delta v &= 2e^{2v} - e^v |\phi|^2, \\
  \slashed{D} \phi  &=-(e^v+2\mu|\phi|^2)\phi,
  \end{cases} \ \ in\ \ \R^2\setminus \{0\},
\end{equation*} and $$\int_{B_2(0)\setminus B_1(0)}(e^{2v}+  |\phi|^4) dx\ge \epsilon_0,\ \ \int_{\R^2}(e^{2v}+  |\phi|^4) dx\le C.$$
There are two singularities: $+\infty$ and $0$. Next, we will prove this two singularities are all removable. In fact, we just need to show singularity at $0$ is removable, then singularity at $+\infty$ is also removable from Corollary \ref{removability at infinity}.

Firstly, by Proposition \ref{Removability of local singularity}, we know that $$v(x)=-\frac{\gamma}{2\pi}+O(1),\ \ \nabla v(x)=-\frac{\gamma}{2\pi}\frac{x}{|x|^2}+O(|x|^{-1+\epsilon'})\ \ near\ \ 0$$ for some small constant $\epsilon'>0$, where $\gamma<2\pi$ is a constant.

For any $s>0$, integrating by parts, we get
\begin{align*}
\gamma=-\lim_{s\to 0}\lim_{n\to\infty}\int_{\partial B_s(0)}\frac{\partial v_n}{\partial r}=
\lim_{s\to 0}\lim_{n\to\infty}\int_{B_s(0)}(-\Delta v_n)dx=\lim_{s\to 0}\lim_{n\to\infty}\int_{B_s(0)}\left(2e^{2v_n} - e^{v_n}|\phi_n|^2 \right)dx.
\end{align*}

Secondly, since $(u_n,\psi_n)$ satisfy  Pohozaev identity that
\begin{equation*}
  \begin{aligned}
    0&=C(u_n,\psi_n)\\&=
    st_n\int _{\partial B_{st_n}(x_n)} \left(\left|
      \frac{\partial u_n}{\partial |x-x_n|}
    \right|^2 - \frac{1}{2}|\nabla u_n|^2\right)
    - \int _{B_{st_n}(x_n)} \left(2e^{2u_n} - e^{u_n}|\psi_n|^2 -|\psi_n|^4 (x-x_n)\cdot \nabla F\right)dx\\
    &\quad +{st_n}\int_{\partial B_{st_n}(x_n)} \left(e^{2u_n} + F|\psi_n|^4\right)\\
    &\quad -\frac{1}{2}\int_{\partial B_{st_n}(x_n)}\left( \left<
      (x-x_n)\cdot \psi_n,\frac{\partial \psi_n}{\partial |x-x_n|}
    \right>+\left<
      \frac{\partial \psi_n}{\partial |x-x_n|},(x-x_n)\cdot \psi_n
    \right>\right)\\
    &=s\int _{\partial B_{s}(0)} \left(\left|
      \frac{\partial v_n}{\partial |x|}
    \right|^2 - \frac{1}{2}|\nabla v_n|^2\right)
    - \int _{B_{s}(0)} \left(2e^{2v_n} - e^{v_n}|\phi_n|^2 -|\phi_n|^4 x\cdot \nabla F(x_n+t_nx)\right)dx\\
    &\quad +{s}\int_{\partial B_{s}(0)} \left(e^{2v_n} + F( x_n+t_nx)|\phi_n|^4\right) -\frac{1}{2}\int_{\partial B_{s}(0)}\left( \left<
      x\cdot \phi_n,\frac{\partial \phi_n}{\partial |x|}
    \right>+\left<
      \frac{\partial \phi_n}{\partial |x|},x\cdot \phi_n
    \right>\right),
\end{aligned}
\end{equation*}
taking the limit that $\lim_{s\to 0}\lim_{n\to\infty}$ and noting that $$\int _{B_{s}(0)} \left||\phi_n|^4 x\cdot \nabla F(x_n+t_nx)\right|dx\leq Cs\int _{B_{s}(0)} |\phi_n|^4 dx\leq Cs,$$ we obtain
\begin{align*}
  0&=C(u_n,\psi_n)\\
  &=\lim_{s\to 0}s\int _{\partial B_{s}(0)} \left(\left|
      \frac{\partial v}{\partial |x|}
    \right|^2 - \frac{1}{2}|\nabla v|^2\right)
    - \lim_{s\to 0}\lim_{n\to\infty}\int _{B_{s}(0)} \left(2e^{2v_n} - e^{v_n}|\phi_n|^2 \right)dx\\
    &\quad +\lim_{s\to 0}s\int_{\partial B_{s}(0)} \left(e^{2v} + \mu|\phi|^4\right) -\lim_{s\to 0}\frac{1}{2}\int_{\partial B_{s}(0)}\left( \left<
      x\cdot \phi,\frac{\partial \phi}{\partial |x|}
    \right>+\left<
      \frac{\partial \phi}{\partial |x|},x\cdot \phi
    \right>\right).
\end{align*}

By Lemma \ref{lem:pohozaev-const} with $ F\equiv\mu$, the Pohozaev constant for $(v,\phi)$ is
  \begin{align*}
    C(v,\phi)=\lim_{s\to 0}C(v,\phi;s)
    &=\lim_{s\to 0}s\int _{\partial B_s} \left(\left|
      \frac{\partial v}{\partial \nu}
    \right|^2- \frac{1}{2}|\nabla v|^2\right)- \lim_{s\to 0}\int _{B_s} \left(2e^{2v} - e^v|\phi|^2 \right)dx
    \\
    &\quad +\lim_{s\to 0}s\int_{\partial B_s}\left( e^{2v} + \mu|\phi|^4\right)
     -\frac{1}{2}\lim_{s\to 0}\int_{\partial B_s} \left(\left<
      x\cdot \phi,\frac{\partial \phi}{\partial \nu}
    \right>+\left<
      \frac{\partial \phi}{\partial \nu},x\cdot \phi
    \right>\right)\\
    &= \lim_{s\to 0}s\int _{\partial B_s} \left(\left|
      \frac{\partial v}{\partial \nu}
    \right|^2- \frac{1}{2}|\nabla v|^2\right)
    +\lim_{s\to 0}s\int_{\partial B_s}\left( e^{2v} + \mu|\phi|^4\right)
     \\&\quad -\frac{1}{2}\lim_{s\to 0}\int_{\partial B_s} \left(\left<
      x\cdot \phi,\frac{\partial \phi}{\partial \nu}
    \right>+\left<
      \frac{\partial \phi}{\partial \nu},x\cdot \phi
    \right>\right).
    \end{align*}

Thus, we arrive at $$0=C(u_n,\psi_n)=C(v,\phi)-\gamma.$$ By Proposition  \ref{Removability of local singularity}, we have $$C(v,\phi)=\gamma=\frac{\gamma^2}{4\pi},$$ which yields $\gamma=0$ and $C(v,\phi)=0$. Then for $(v,\phi)$, the singularity at $0$ is removable. By Corollary \ref{removability at infinity}, we get the second bubble which is a contradiction. We proved the lemma.

\end{proof}

Now we give the proof of the energy identity for the spinor.

\textbf{Proof of Theorem \ref{thm:main-2}:}
Similar to Dirac-harmonic maps \cite{zhao2007energy} or super-Liouville equation \cite{Jost-Wang-Zhou-Zhu-1}, we can divide $B_\delta(x_n)\setminus B_{\lambda_n R}(x_n)$ into finitely many parts,
\begin{equation*}
 B_\delta(x_n)\setminus B_{\lambda_n R}(x_n)= \bigcup_{k=1}^{N_n} P^k,\,\,P^k = B_{T^k}(x_n)\setminus B_{T^{k-1}}(x_n),\,\,T^0 = \lambda_nR<T^1....<T^{N_n} = \delta,
\end{equation*}
such that $N_n\le N_0$, where $N_0$ is a uniform integer for all large $n$, and
\begin{equation*}
  \int_{P^k} (e^{2u_n} + |\psi|^4)dx \le \frac{1}{4\Lambda^2},
\end{equation*} where $\Lambda$ is the constant in Lemma \ref{lem:07}.

By Lemma \ref{lem:07} and Lemma \ref{lem:04}, for any $\epsilon>0$, taking $R$ and $\delta^{-1}$ big, when $n$ is large enough, for each $l=1,...,N_n$, we have
\begin{equation*}
  \begin{aligned}
    \left(
     \int_{P^l} |\psi_n|^4dx
    \right)^{\frac{1}{4}}
    &\le \Lambda \left(
      \int_{B_{2T^l}(x_n)\setminus B_{\frac{1}{2}T^{l-1}}(x_n) } (e^{2u_n} +|\psi_n|^4)dx
    \right)^{\frac{1}{2}}\cdot
    \left(
      \int_{B_{2T^l}(x_n)\setminus B_{\frac{1}{2}T^{l-1}}(x_n) }|\psi_n|^4dx
    \right)^{\frac{1}{4}}\\
    &\quad +C\left(
    \int_{B_{2T^l}(x_n)\setminus B_{T^{l}}(x_n) }|\psi_n|^4dx
    \right)^{\frac{1}{4}}+
    C\left(
      \int_{B_{T^{l-1}}(x_n)\setminus B_{\frac{1}{2}T^{l-1}}(x_n) } |\psi_n|^4dx
    \right)^{\frac{1}{4}}\\
    &\le \Lambda \left(\left(
      \int_{P^l} (e^{2u_n} + |\psi_n|^4)dx
    \right)^{\frac{1}{2}}+ \epsilon^{\frac{1}{2}}\right)\cdot \left(
    \left(
      \int_{P^l} |\psi_n|^4dx
    \right)^{\frac{1}{4}} + \epsilon^{\frac{1}{4}}
    \right)+C\epsilon^{\frac{1}{4}}\\
    &\le \Lambda\left(
      \int_{P^l} (e^{2u_n} + |\psi_n|^4)dx
    \right)^{\frac{1}{2}}\cdot \left(
      \int_{P^l} |\psi_n|^4dx
    \right)^{\frac{1}{4}} +C\epsilon^{\frac{1}{4}}\\
    &\le \frac{1}{2}  \left(
     \int_{P^l} |\psi_n|^4dx
    \right)^{\frac{1}{4}} + C\epsilon^{\frac{1}{4}},\\
  \end{aligned}
\end{equation*}
which implies
\begin{equation*}
  \int_{P^l} |\psi_n|^4dx \le C\epsilon,\ \ and\ \ \int_{B_\delta(x_n)\setminus B_{\lambda_n R}(x_n)} |\psi_n|^4dx \le CN_0\epsilon.
\end{equation*}
This implies \eqref{equat:23} and we complete the proof of Theorem \ref{thm:main-2}.

\

\section{Blow-up values and energy identity for the function}\label{sec:Blow-up values and energy identity for the function}

\

In this section, we will firstly use the energy identity of the spinor to rule out the partial case in the conclusion $(c)$ of Theorem \ref{thm:main-1}, see Theorem \ref{thm:main-3}. Secondly, we will calculate the blow-up masses at a blow-up point, where there may appear two different values according to two kinds of singularities, see Theorem \ref{thm:main-4}. Lastly, we will show the energy identity for the function, i.e. the proof of Theorem \ref{thm:main-5}.

\

As an application of the energy identity for the spinor, we firstly show the proof for Theorem \ref{thm:main-3}.

\begin{proof}[\textbf{Proof of Theorem \ref{thm:main-3}:}]

  We use a contradiction argument. If not, then $(u_n,\psi_n)$ is bounded in $L^\infty_{loc} (M\setminus \Sigma)$.
  By the standard elliptic estimate we can choose a subsequence of $(u_n,\psi_n)$ such that $(u_n,\psi_n)$ converges to $(u,\psi)$ in $C^\infty_{loc}(M\setminus S)$.
  We choose a fixed blow up point $p$ and $R>0$ such that $p$ is the only blow-up point in $B_R(p)$.
  Thus $(u,\psi)$ satisfy
  \begin{equation*}
    \begin{cases}
          -\Delta u &= 2e^{2u} - e^u |\psi|^2, \\
        \slashed{D} \psi &= -(e^u + 2F|\psi|^2) \psi,
    \end{cases}\ \ in \ \ B_R(p)\setminus \{p\}.
  \end{equation*}
 By Proposition \ref{Removability of local singularity} and the proof of Lemma \ref{lem:04}, we know that $$u(x)=-\frac{\gamma}{2\pi}\ln |x-p|+O(1),\ \ \nabla u(x)=-\frac{\gamma}{2\pi}\frac{x}{|x|^2}+O(|x|^{-1+\epsilon'})\ \ near\ \ 0$$ for some small constant $\epsilon'>0$, where $\gamma<2\pi$ is a constant defined by $$\gamma=\lim_{\delta\rightarrow 0}\lim_{n\rightarrow \infty} \int_{B_\delta(p) } \left(2e^{2u_n} - e^{u_n} |\psi_n|^2 \right)dx.$$

 Without loss of generality, we assume there is only one bubble at $p$. Using the notation in Lemma \ref{lem:04}, we have
  \begin{align*}
    \gamma&=\lim_{\delta\rightarrow 0}\lim_{n\rightarrow \infty}\int_{B_\delta(p) } \left(2e^{2u_n} - e^{u_n} |\psi_n| ^2 \right)dx\\ &=\lim_{R\rightarrow \infty}\lim_{\delta\rightarrow 0}\lim_{n\rightarrow \infty}\int_{B_\delta(p)\setminus B_{\lambda_nR}(x_n) } \left(2e^{2u_n} - e^{u_n} |\psi_n| ^2 \right)dx+\lim_{R\rightarrow \infty}\lim_{\delta\rightarrow 0}\lim_{n\rightarrow \infty}\int_{B_{\lambda_nR}(x_n) } \left(2e^{2u_n} - e^{u_n} |\psi_n| ^2 \right)dx\\
    &\geq \lim_{R\rightarrow \infty}\lim_{\delta\rightarrow 0}\lim_{n\rightarrow \infty}\int_{B_\delta(p)\setminus B_{\lambda_nR}(x_n) } \left( - e^{u_n} |\psi_n| ^2 \right)dx+\lim_{R\rightarrow \infty}\lim_{\delta\rightarrow 0}\lim_{n\rightarrow \infty}\int_{B_{\lambda_nR}(x_n) } \left(2e^{2u_n} - e^{u_n} |\psi_n| ^2 \right)dx=4\pi.
  \end{align*}
  which is a contradiction since $\gamma<2\pi$.

\end{proof}

Now we define the following blow-up value at a blow-up point $p\in\Sigma$ that
\begin{equation*}
  m(p):=\lim_{\delta\rightarrow 0}\lim_{n\rightarrow 0} \int_{B_\delta(p)} \left(2e^{2u_n} - e^{u_n}|\psi_n|^2\right) dx.
\end{equation*}
In order to calculate the blow-up value, we need the following lemma.

\begin{lem}\label{lem:Green funciton for u_n}
  Let $(u_n,\psi_n) $ be a sequence solution to \eqref{equat:01}  with finite energy $E(u_n,\psi_n;M)<C$. If $\Sigma_3\neq \emptyset$, then there  exists $G\in W^{1,p}(G)\cap C^2_{loc}(M\setminus \Sigma)$ with $\int_M Gdx = 0$ for $1<q<2$ such that
  \begin{equation*}
    u_n - \frac{1}{|M|}\int_M u_ndx \rightarrow G
  \end{equation*}
  in $C^2_{loc}(M\setminus \Sigma)$ and weakly in  $W^{1,q}(M)$.
  Moreover, in $\Sigma = \{p_1,\cdots,p_l\}$ then for $R>0$ small such that
  $B_R(p_k)\cap S = \{p_k\}$, we have
  \begin{equation*}
    G=-\frac{1}{2\pi} m(p_k)\log|x-p_k| + g(x),
  \end{equation*}
  where $g\in C^2(B_R(p_k))$.
\end{lem}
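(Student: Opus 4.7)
My plan is to set $v_n := u_n - \bar u_n$ with $\bar u_n := \frac{1}{|M|}\int_M u_n\,dM$, so that $\int_M v_n\,dM = 0$ and
\[
-\Delta v_n = 2e^{2u_n} - e^{u_n}|\psi_n|^2 - K_g \quad \text{on } M.
\]
The first step is an $L^1$-bound on the right-hand side. The energy bound $E(u_n,\psi_n;M)\le C$ directly controls $\|e^{2u_n}\|_{L^1(M)}$ and, via Cauchy--Schwarz, also $\|e^{u_n}|\psi_n|^2\|_{L^1(M)}\le \|e^{2u_n}\|_{L^1(M)}^{1/2}\|\psi_n\|_{L^4(M)}^{2}\le C$. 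With this, I will invoke the classical two-dimensional Poisson-equation estimate with $L^1$ data (Brezis--Strauss/Stampacchia on the closed surface $M$) to obtain a uniform $W^{1,q}(M)$ bound on $v_n$ for every $q\in(1,2)$.

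Second, I will extract a subsequence with $v_n \rightharpoonup G$ weakly in $W^{1,q}(M)$; by Rellich--Kondrachov this can be upgraded to strong $L^1(M)$ convergence, which automatically passes the zero-mean condition to the limit. To identify the distributional equation satisfied by $G$, I will use Theorem \ref{thm:main-3}: it tells us that $(2e^{2u_n} - e^{u_n}|\psi_n|^2)\,dM$ converges as Radon measures to $\sum_{k=1}^{l} m(p_k)\,\delta_{p_k}$, so passing to the limit gives
\[
-\Delta G = \sum_{k=1}^{l} m(p_k)\,\delta_{p_k} - K_g \quad \text{in } \mathcal{D}'(M).
\]
(Testing both sides against the constant $1$ recovers Gauss--Bonnet via the total local masses, which is automatic from the measure convergence and serves as a consistency check.)

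Third, I will read off the local structure near each $p_k$ by subtracting the Newtonian potential: on $B_R(p_k)$ write $G(x) = -\frac{1}{2\pi}m(p_k)\log|x-p_k| + g(x)$; then the residual $g$ satisfies $-\Delta g = -K_g$ distributionally on $B_R(p_k)$, with $g\in W^{1,q}_{\mathrm{loc}}$, so standard elliptic regularity with smooth right-hand side yields $g\in C^2(B_R(p_k))$. For the $C^2_{\mathrm{loc}}(M\setminus\Sigma)$ convergence, on any compact subset of $M\setminus\Sigma$ the right-hand side of the $v_n$-equation converges uniformly to $-K_g$ (since $u_n\to -\infty$ uniformly there by Theorem \ref{thm:main-3}) while $v_n$ remains bounded in $W^{1,q}$; a standard bootstrap ($L^\infty$ data $\Rightarrow W^{2,p}\Rightarrow C^{1,\alpha}\Rightarrow C^2$) then upgrades the weak convergence to $C^2_{\mathrm{loc}}$ on $M\setminus\Sigma$.

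The main obstacle, in my view, is the distributional passage to the limit in the second step: one needs not merely tightness of the measures $(2e^{2u_n}-e^{u_n}|\psi_n|^2)\,dM$ but the precise atomic decomposition with masses $m(p_k)$, which is exactly what Theorem \ref{thm:main-3} supplies once we have established $\Sigma_3\ne\emptyset$. Everything else reduces to $L^1$-Poisson theory on a closed surface, Rellich compactness, and elliptic bootstrapping.
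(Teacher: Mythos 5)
Your proof is correct and follows essentially the same route as the paper: both rest on an $L^1$ bound for $2e^{2u_n}-e^{u_n}|\psi_n|^2$ to get uniform $W^{1,q}$ control of $u_n-\bar u_n$, the concentration of this source term at $\Sigma$ to identify the distributional Laplacian of the limit, and local Newtonian-potential subtraction plus elliptic regularity for the final structure. The only cosmetic difference is that you extract the weak limit first and then read off the equation from Theorem \ref{thm:main-3}, whereas the paper defines $G$ a priori as the zero-mean solution of $-\Delta G=\sum m(p)\delta_p-K_g$ and then verifies $\nabla(u_n-G)\rightharpoonup 0$ by a duality/testing computation that re-derives the same measure convergence on the fly.
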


\begin{proof}
  Let $p=\frac{q}{q-1}>2$, We have
  \begin{equation*}
    \norm{\nabla u_n}_{L^q(M)} \le \sup\left\{
      \left|
        \int_M \nabla u_n \nabla \phi
      \right|:\phi\in W^{1,p}(M),\int_M \phi =0,\norm{\phi}_{W^{1,p}(M)} =1
    \right\}.
  \end{equation*}
  By sobolev imbedding we have $    \norm{\phi}_{L^\infty (M)}\le C.$
  It is clear that
  \begin{equation*}
    \left|
      \int_M \nabla u_n \nabla \phi
    \right|\le \left|
      \int_M (\Delta u_n) \phi
    \right| \le C
    \left|
      \int_M 2e^{2u_n} + e^{u_n} |\psi_n|^2 + |\psi_n|^4
    \right|\le C.
  \end{equation*}
  Thus $ u_n - \bar{u}_n$ is bounded in $W^{1,q}(M)$,
  we assume that $u_n - \bar{u}_n$  weakly converges to $H$.
  Next, we define
  \begin{equation*}
    \begin{cases}
    -\Delta G = \sum_{p\in S} m(p)\delta_p - K_g,\\
      \int_M G=0.
    \end{cases}
  \end{equation*}

  For all $\phi\in C^\infty (M)$ we have
  \begin{align*}
    \lim_{n\to\infty}\int_M \nabla(u_n - G) \nabla \phi dx
     &=  \lim_{n\to\infty}\int_M \left(2e^{2u_n} - e^{u_n}|\psi_n|^2  - \sum_{p\in \Sigma} m(p)\delta_p\right) \phi dx\\
     &= \lim_{\delta\to 0} \lim_{n\to\infty}\int_{M\setminus \cup_{p\in\Sigma}B_\delta(p)} \left(2e^{2u_n} - e^{u_n}|\psi_n|^2 \right) \phi dx\\
     &\quad + \lim_{\delta\to 0} \lim_{n\to\infty}\sum_{p\in\Sigma}\int_{B_\delta(p)} \left(2e^{2u_n} - e^{u_n}|\psi_n|^2 - \sum_{p\in \Sigma} m(p)\delta_p\right) \phi dx\\
     &= \lim_{\delta\to 0} \lim_{n\to\infty}\sum_{p\in\Sigma}\int_{B_\delta(p)} \left(2e^{2u_n} - e^{u_n}|\psi_n|^2- \sum_{p\in \Sigma} m(p)\delta_p\right) \phi dx=0.
  \end{align*}

  This means $\nabla u_n \rightarrow \nabla G$ in $ L^q (M)$
  and $\nabla G = \nabla H$ almost everywhere.
  Then $G = C+ H$ where $C$ is a constant.
  For $u_n - \bar{u}_n \rightarrow H$ in $L^q$,
  \begin{equation*}
    0=\int_M Gdx = \int_M Hdx + \int_M Cdx = C\cdot |M|.
  \end{equation*}
  Then $G=H$ almost everywhere.
\end{proof}

\

With the help of above lemma, we compute the blow-up values.

\begin{proof}[\textbf{Proof of Theorem \ref{thm:main-4}:}]
  Assume that $p$ is the only blow up point in $B_R(p)$. For simplicity of notations, we assume $p=0$. Recall the Pohozaev identity for $(u_n,\psi_n)$ that
\begin{equation*}
  \begin{aligned}
    0&=
    s\int _{\partial B_{s}(0)} \left(\left|
      \frac{\partial u_n}{\partial r}
    \right|^2 - \frac{1}{2}|\nabla u_n|^2\right)
    - \int _{B_{s}(0)} \left(2e^{2u_n} - e^{u_n}|\psi_n|^2 -|\psi_n|^4 x\cdot \nabla F\right)dx\\
    &\quad +{s}\int_{\partial B_{s}(0)} \left(e^{2u_n} + F|\psi_n|^4\right)-\frac{1}{2}\int_{\partial B_{s}(0)}\left( \left<
      x\cdot \psi_n,\frac{\partial \psi_n}{\partial r}
    \right>+\left<
      \frac{\partial \psi_n}{\partial r},x\cdot \psi_n
    \right>\right).
\end{aligned}
\end{equation*}
By Lemma \ref{lem:Green funciton for u_n}, we first have
  \begin{equation*}
    \lim_{s\rightarrow 0}\lim_{n\rightarrow \infty} s\int _{\partial B_s}\left( \left|
      \frac{\partial u_n}{\partial \nu}
    \right|^2 - \frac{1}{2}|\nabla u_n|^2\right) = \frac{m^2(p)}{4\pi}.
  \end{equation*}
Secondly, noting that
  \begin{align*}
    &\lim_{n\to\infty}\left\{s\int_{\partial B_{s}(0)} \left(e^{2u_n} + F|\psi_n|^4\right)-\frac{1}{2}\int_{\partial B_{s}(0)}\left( \left<
      x\cdot \psi_n,\frac{\partial \psi_n}{\partial r}
    \right>+\left<
      \frac{\partial \psi_n}{\partial r},x\cdot \psi_n
    \right>\right)\right\}\\
    &=s\int_{\partial B_{s}(0)} \left( F|\psi|^4\right)-\frac{1}{2}\int_{\partial B_{s}(0)}\left( \left<
      x\cdot \psi,\frac{\partial \psi}{\partial r}
    \right>+\left<
      \frac{\partial \psi}{\partial r},x\cdot \psi
    \right>\right).
  \end{align*}
    Since  $$F(x)  |\psi|^4 -\frac{1}{2} \frac{1}{|x|}\left( \left<
      x\cdot \psi,\frac{\partial \psi}{\partial r}
    \right>+\left<
      \frac{\partial \psi}{\partial r},x\cdot \psi
    \right>\right)\in L^1(B_s(0)),$$
    by Fubini's theorem, there exists $\{s_k \}_{k}$ with $\lim_{k\to\infty}s_k= 0$ such that
    \begin{equation*}
      \lim_{k\to\infty}\lim_{n\to\infty}\left\{s_k\int_{\partial B_{s_k}(0)} \left(e^{2u_n} + F|\psi_n|^4\right)-\frac{1}{2}\int_{\partial B_{s_k}(0)}\left( \left<
      x\cdot \psi_n,\frac{\partial \psi_n}{\partial r}
    \right>+\left<
      \frac{\partial \psi_n}{\partial r},x\cdot \psi_n
    \right>\right)\right\}=0.
    \end{equation*}
    Choosing  $s=s_k$ in the Pohozaev identity and  then taking the limit $\lim_{k\to\infty}\lim_{n\to\infty}$, we get
    \begin{equation*}
      \frac{m^2(p)}{4\pi} = m(p),
    \end{equation*} which immediately implies that either $m(p)=4\pi$ or $0$. Then the conclusions follows from Theorem \ref{thm:main-3}.
\end{proof}

\

Next we prove the energy identity for the function.

\

\begin{lem}\label{lem:05}
Suppose $\Sigma_3\neq \emptyset$. Then for each $x_i\in\Sigma_3\subset\Sigma$, among the bubbles $(u^{i,k},\psi^{i,k})$, $k=1,...,L_i$ in Theorem \ref{thm:main-2}, there is only one super-Liouville type bubble (w.l.o.g, denoted by $u^{i,1}$) where the other bubbles are all spinorial Yamabe type bubble. Moreover, we have following energy identity $$\lim_{\delta\to 0}\lim_{n\to\infty}\int_{B_\delta(x_i)}e^{2u_n}dx=\int_{S^2}e^{2u^{i,1}}dx.$$
\end{lem}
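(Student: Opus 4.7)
The plan is to produce one super-Liouville type bubble at $x_i\in\Sigma_3$ as the primary bubble, then rule out any further super-Liouville bubbles by a mass balance, and finally establish the function energy identity by combining the resulting neck-mass estimate with Cauchy--Schwarz and the spinor energy identity from Theorem \ref{thm:main-2}.

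First I would extract $(u^{i,1},\psi^{i,1})$ by the rescaling of Case (i) in Section \ref{sec:energy-identity-spinor}: picking $x_n^{(1)}\in\overline{B_\delta(x_i)}$ where $u_n$ attains its maximum and setting $\lambda_n^{(1)}:=e^{-u_n(x_n^{(1)})}$, the first-type-singularity hypothesis $x_i\in\Sigma_3$ forces $|\tilde\psi_n|$ to be uniformly bounded on each fixed ball, so standard elliptic estimates yield $C^2_{loc}(\R^2)$-convergence to a limit $(u^{i,1},\psi^{i,1})$ with $u^{i,1}(0)=0$. Corollary \ref{removability at infinity} then lifts this to a smooth bubble on $S^2$, necessarily of super-Liouville type since $u^{i,1}$ does not degenerate to $-\infty$.

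Next I would show that no second super-Liouville bubble is possible via mass balance. By Theorem \ref{thm:main-4}, $m(x_i)=4\pi$; by Proposition \ref{prop:01}, each super-Liouville bubble on the list $(u^{i,k},\psi^{i,k})$ carries mass exactly $4\pi$, whereas each spinorial Yamabe bubble has $u^{i,k}\equiv-\infty$ and so carries mass $0$. The intermediate claim I would prove is the neck-mass estimate
\[
\int_{A_n}\bigl(2e^{2u_n}-e^{u_n}|\psi_n|^2\bigr)\,dx = o(1),
\]
where $A_n$ is any annular neck separating two consecutive bubble scales (or the coarsest bubble from the base). Rewriting the left-hand side via the first line of \eqref{equat:01} as $-\int_{\partial A_n}\partial_\nu u_n - \int_{A_n} K_g$, picking good boundary radii by Fubini applied to the bounded total energy, and telescoping the fluxes through these boundaries (each of which detects the mass enclosed by the corresponding ball) gives the claim. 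Combining everything yields $4\pi=m(x_i)=4\pi K + 0$, where $K$ is the number of super-Liouville bubbles, hence $K=1$.

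Finally, for the energy identity I would split
\[
\int_{B_\delta(x_i)} e^{2u_n}\,dM = \sum_{k=1}^{L_i}\int_{\Omega_n^{(k)}} e^{2u_n}\,dM + \int_{A_n} e^{2u_n}\,dM,
\]
where $\Omega_n^{(k)}$ is a shrinking neighbourhood of the $k$-th bubble at its correct scale. The $k=1$ piece converges to $\int_{S^2} e^{2u^{i,1}}\,dx$ by $C^2_{loc}$ convergence together with the change of variables formula; each $k\ge 2$ piece vanishes because $\tilde u_n\to-\infty$ uniformly on the rescaled compact core of a spinorial Yamabe bubble; and on the necks I write
\[
\int_{A_n} e^{2u_n}\,dx = \tfrac12\int_{A_n}\bigl(2e^{2u_n}-e^{u_n}|\psi_n|^2\bigr)\,dx + \tfrac12\int_{A_n} e^{u_n}|\psi_n|^2\,dx,
\]
whose first summand is $o(1)$ by the step just above, and whose second summand satisfies
\[
\int_{A_n} e^{u_n}|\psi_n|^2\,dx \le \Bigl(\int_M e^{2u_n}\Bigr)^{1/2}\Bigl(\int_{A_n}|\psi_n|^4\Bigr)^{1/2} \le C\cdot o(1)^{1/2}
\]
by Cauchy--Schwarz and the spinor energy identity of Theorem \ref{thm:main-2} applied on $A_n$. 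The main obstacle will be the intermediate neck-mass estimate: when the blow-up at $x_i$ involves several nested scales (one super-Liouville plus several spinorial Yamabe bubbles), making the bookkeeping of telescoping boundary fluxes rigorous at correctly chosen radii is delicate and is essentially the technical heart that the authors flag in the introduction.
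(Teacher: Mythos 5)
Your proposal follows essentially the same route as the paper's proof: extract the primary super-Liouville bubble at the finest scale of $u_n$, invoke $m(x_i)=4\pi$ from Theorem~\ref{thm:main-4} together with the bubble mass $4\pi$ from Proposition~\ref{prop:01} to force uniqueness, reduce the neck estimate for $e^{2u_n}$ to the quantity $2e^{2u_n}-e^{u_n}|\psi_n|^2$ by the flux computation \eqref{equat:13}, and then control $\int e^{u_n}|\psi_n|^2$ through the spinor energy identity of Theorem~\ref{thm:main-2}. The decomposition into bubble cores plus genuine necks, with a Cauchy--Schwarz estimate on the necks, is exactly the skeleton of the paper's Case~II argument (which explicitly separates the spinorial Yamabe core, where $e^{v_n}|\phi_n|^2\to 0$ pointwise, from the neck annuli, where the spinor $L^4$-energy degenerates).

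A few places where your plan is thinner than the actual proof and would need to be firmed up. First, in the mass balance ``$4\pi=4\pi K+0$'' you are implicitly assuming the neck masses are $o(1)$, and because $2e^{2u_n}-e^{u_n}|\psi_n|^2$ is not signed, ruling out a second super-Liouville bubble already requires the Cauchy--Schwarz bound $\int_{A_n}e^{u_n}|\psi_n|^2\le(\int_{A_n}e^{2u_n})^{1/2}(\int_{A_n}|\psi_n|^4)^{1/2}=o(1)$ \emph{at that stage}, not only in the final step; the order of logic needs to reflect this (and the first factor should be over $A_n$, not $M$, though both are bounded). Second, ``each ball detects the mass enclosed'' must be read carefully: in the nested configuration (the paper's Case~2 of Lemma~\ref{lem:04}) the ball at the spinorial Yamabe scale $t_n$ is centered at $x_n$ and therefore also encloses the super-Liouville bubble, so its flux tends to $-4\pi$, not $0$; the telescoping is still correct, but your per-annulus bookkeeping must account for this, and also for the sibling configuration (the paper's Case~1), where the bubble sits at a rescaled point $q\neq 0$ and the excised region is a genuinely off-center disc. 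Third, citing ``the spinor energy identity applied on $A_n$'' is a slight misstatement: Theorem~\ref{thm:main-2} is a global identity, and the conclusion $\lim_{\delta\to0}\lim_{R\to\infty}\lim_n\int_{A_n}|\psi_n|^4=0$ is its corollary after subtracting the bubble and weak-limit energies; this is precisely what the paper's Lemma~\ref{lem:04} establishes annulus by annulus.

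None of these are wrong ideas, but as you yourself note, the flux and neck bookkeeping across a multi-scale bubble tree is the real content here; the paper's proof of Lemma~\ref{lem:05} spends most of its length precisely on making these steps rigorous in the two possible bubble-tree geometries.
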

\begin{proof}

For simplicity of notations and expressing the idea of the proof better, without loss of generality, we assume there are at most two bubbles at a considered blow-up point $p\in\Sigma$. The general case (finite bubbles) can be derived by an  induction argument.

For $p\in\Sigma_3$, by the process of constructing the first bubble at the beginning of Section \ref{sec:energy-identity-spinor}, we know that there exist $x_n\to p$ and $\lambda_n\to 0$ such that   \begin{equation*}
(\tilde{u}_n(x), \tilde{\psi}_n(x)) :=\left( u_n (\lambda_n x+ x_n) + \log\lambda_n, \lambda_n^{\frac{1}{2}} \psi_n (\lambda_n x + x_n)\right)\to (\tilde{u},\tilde{\psi})\ \ in\ \ C^2_{loc}(\R^2),\end{equation*} where $(\tilde{u},\tilde{\psi})$ is a super-Liouville type bubble.

Next, we need to show that if there are additional bubbles, they must be spinorial Yamabe type bubble. Moreover, the following equality hods
\begin{equation}\label{equat:11}
\lim_{\delta\to 0}\lim_{R\to\infty}\lim_{n\to\infty}\int_{B_{\delta}(x_n)\setminus B_{\lambda_nR}(x_n)}e^{2u_n}dx=0.
\end{equation}

\

Firstly, integrating by parts, by Lemma \ref{lem:Green funciton for u_n} and Proposition \ref{prop:01}, we have
\begin{align}\label{equat:13}
\lim_{\delta\to 0}\lim_{R\to\infty}\lim_{n\to\infty}\int_{B_{\delta}(x_n)\setminus B_{\lambda_nR}(x_n)}(2e^{2u_n}-e^{u_n}|\psi_n|^2)dx&=\lim_{\delta\to 0}\lim_{R\to\infty}\lim_{n\to\infty}\int_{B_{\delta}(x_n)\setminus B_{\lambda_nR}(x_n)}(-\Delta u_n)dx\notag\\
&=-\lim_{\delta\to 0}\lim_{n\to\infty}\int_{\partial B_{\delta}(p)}\frac{\partial u_n}{\partial r}+\lim_{R\to\infty}\lim_{n\to\infty}\int_{\partial B_{ R}(0)}\frac{\partial \tilde{u}_n}{\partial r} =0.
\end{align}

We have to consider  following two cases.

\

\textbf{Case I:} There is only one bubble at $p$.

\

In this case, by \eqref{equat:13} and energy identity for the spinor, we get
\begin{align*}
\lim_{\delta\to 0}\lim_{R\to\infty}\lim_{n\to\infty}\int_{B_{\delta}(x_n)\setminus B_{\lambda_nR}(x_n)}e^{2u_n}dx&=\lim_{\delta\to 0}\lim_{R\to\infty}\lim_{n\to\infty}\int_{B_{\delta}(x_n)\setminus B_{\lambda_nR}(x_n)}(2e^{2u_n}-e^{u_n}|\psi_n|^2)dx \\&\quad +\lim_{\delta\to 0}\lim_{R\to\infty}\lim_{n\to\infty}\int_{B_{\delta}(x_n)\setminus B_{\lambda_nR}(x_n)}(e^{u_n}|\psi_n|^2)dx=0.
\end{align*}

\

\textbf{Case II:} There are two bubbles at $p$.

\

 By the proof of Lemma \ref{lem:04}, there exist $\epsilon_0>0$, $\delta>0,\ R>100$ and sequence $t_n\in [\lambda_nR,\delta]$ such that \begin{equation*}
  \int_{B_{2t_n}(x_n)\setminus B_{t_n}(x_n)} (e^{2u_n} + |\psi_n|^4 )dx\ge \epsilon_0,\ \ \lim_{n\to\infty}\frac{t_n}{\lambda_nR}=\lim_{n\to\infty}\frac{\delta}{t_n}=+\infty.
\end{equation*}
Denote
$$(v_n(x), \phi_n(x)):=(u_n(x_n+t_nx)+\ln t_n, \sqrt{t_n}\psi_n(x_n+t_nx)).$$

Since $m(p)=4\pi$, the second bubble must be Yamabe-type bubble. Otherwise, $m(p)\geq 8\pi$. Thus, the $Case\ 3 $ in Lemma \ref{lem:04} will not happen and then  we have to consider the other two cases.

\

\textbf{Case 1 in Lemma \ref{lem:04}:} There exists $T>100$ such that $(v_n,\phi_n)$ has a energy concentration point $q\in  B_{T}(0)\setminus B_{T^{-1}}(0)$, i.e.
\begin{eqnarray}
  \lim_{n\rightarrow \infty}\int_{B_r(q)}( e^{2v_n} + |\phi_n|^4 )dx\ge \epsilon_0>0,\ \ \forall \ r>0.
\end{eqnarray}

In this case, we know that for sequence $(v_n,\phi_n)$, the blow-up point $q$ must be second type singularity. Moreover, there exist $y_n\to q$, $\mu_n\to 0$ such that
\begin{align*}
\tilde{v}_n(x):=v_n(y_n+\mu_nx)+\ln\mu_n\to -\infty\ \ & \mbox{ uniformly in any compact subset of }  \R^2,\\
\tilde{\phi}_n(x)):= \sqrt{\mu_n}\phi_n(y_n+\mu_nx))\to \tilde{\phi}(x)\ \ &in\ \ C^2_{loc}(\R^2),
\end{align*} where $\tilde{\phi}$ ia s spinorial Yamabe type bubble which is the second bubble. Since there is no third bubble, then we have
\begin{equation*}
\lim_{R\to +\infty}\lim_{\delta\to 0}\lim_{n\to\infty}  \sup_{t\in [\mu_nR,\delta]}\int_{B_{2t}(y_n)\setminus B_t(y_n)}( e^{2v_n} + |\phi_n|^4)dx =0,
\end{equation*} which implies the energy identity
\begin{equation}\label{equat:24}
\lim_{R\to +\infty}\lim_{\delta\to 0}\lim_{n\to\infty}\int_{B_\delta(q)\setminus B_{\mu_nR}(y_n)}|\phi_n|^4dx=0.
\end{equation}

At the same time, by small energy regularity theory, we have one of the following alternatives holds:
\begin{itemize}
\item[(1-1)] $v_n(x)\to -\infty$ uniformly in any compact subset of $\R^2\setminus \{0,q\}$ and $\phi_n\to\phi$ in $C^1_{loc}(\R^2\setminus \{0,q\})$ where $$\slashed{D}\phi=\mu |\phi|^2\phi,\ \ x\in \R^2\setminus \{0,q\}.$$
\item[(1-2)]  $(v_n,\phi_n)\to (v,\phi)$ in $C^2_{loc}(\R^2\setminus \{0,q\})$ where $(v,\phi)$ satisfies
\begin{equation*}
  \begin{cases}
  -\Delta v &= 2e^{2v} - e^v |\phi|^2, \\
  \slashed{D} \phi  &=-(e^v+2\mu|\phi|^2)\phi,
  \end{cases} \ \ in\ \ \R^2\setminus \{0,q\}.
\end{equation*}
\end{itemize}

For Case $(1-2)$, similar to the argument as in the Case $3$ of Lemma \ref{lem:04}, we can show that the singularities $0$ and $q$ are all removable. This gives us the third bubble which is a contradiction, i.e. Case $(1-2)$ will not happen. For Case $(1-1)$, $\phi$ must be trivial, i.e. $\phi\equiv 0$. Otherwise, by removable singularity, $\phi$ is the third bubble which is also a contradiction.

Thus, we have
\begin{align}\label{equat:18}
\lim_{n\to\infty}\int_{B_{Tt_n}(x_n)\setminus B_{T^{-1}t_n}(x_n)}e^{u_n}|\psi_n|^2dx
&=\lim_{n\to\infty}\int_{B_T(0)\setminus B_{T^{-1}}(0)}e^{v_n}|\phi_n|^2dx\notag\\&=\lim_{\delta\to 0} \lim_{n\to\infty}\int_{B_\delta(q)}e^{v_n}|\phi_n|^2dx\notag\\
&=\lim_{R\to\infty}\lim_{\delta\to 0} \lim_{n\to\infty}\int_{B_\delta(q)\setminus B_{\mu_n R}(y_n)}e^{v_n}|\phi_n|^2dx+ \lim_{R\to\infty}\lim_{n\to\infty}\int_{B_R(0)}e^{\tilde{v}_n}|\tilde{\psi}_n|^2dx=0,
\end{align} where the last equality follows from \eqref{equat:24}.

Since there is no third bubble, by the proof of Lemma \ref{lem:04}, we also have
\begin{equation*}
\lim_{\delta\to 0}\lim_{n\to\infty}  \sup_{t\in [Tt_n,\delta]}\int_{B_{2t}(x_n)\setminus B_t(x_n)}( e^{2u_n} + |\psi_n|^4)dx=\lim_{R\to +\infty}\lim_{n\to\infty}  \sup_{t\in [\lambda_nR,T^{-1}t_n]}\int_{B_{2t}(x_n)\setminus B_t(x_n)}( e^{2u_n} + |\psi_n|^4)dx =0,
\end{equation*} which implies the energy identity
\begin{equation}\label{equat:17}
\lim_{\delta\to 0}\lim_{n\to\infty}\int_{B_\delta(p)\setminus B_{Tt_n}(x_n)}|\psi_n|^4dx=\lim_{R\to +\infty}\lim_{n\to\infty}\int_{B_{T^{-1}t_n}(x_n)\setminus B_{\lambda_nR}(x_n)}|\psi_n|^4dx=0.
\end{equation}

By \eqref{equat:17} and \eqref{equat:18}, we have $$\lim_{\delta\to 0}\lim_{n\to\infty}\int_{B_\delta(p)\setminus B_{\lambda_nR}(x_n)}e^{u_n}|\psi_n|^2dx=0.$$ Then \eqref{equat:11} follows immediately from \eqref{equat:13}.

\

\textbf{Case 2 in Lemma \ref{lem:04}:} If $v_n \rightarrow -\infty$ uniformly on all compact subset of $\mathbb{R}^2 \setminus \{0\}$, we have $\phi_n \rightarrow \phi$ in $C^2_{loc}(\mathbb{R}^2\setminus \{0\})$, where $\phi$ is a Spinorial Yamabe type bubble which is the second bubble.

\

It is easy to see that
\begin{equation}\label{equat:15}
  \lim_{T\to\infty}\lim_{n\to\infty}\int_{B_{Tt_n}(x_n)\setminus B_{T^{-1}t_n}(x_n)}e^{u_n}|\psi_n|^2dx
  =\lim_{T\to\infty}\lim_{n\to\infty}\int_{B_T(0)\setminus B_{T^{-1}}(0)}e^{v_n}|\phi_n|^2dx = 0.
\end{equation}

Since there is no third bubble, similar to the proof of Lemma \ref{lem:04},  we  have
\begin{align*}
\lim_{T\to\infty}  \lim_{\delta\to 0}\lim_{n\to\infty}  \sup_{t\in [Tt_n,\delta]}\int_{B_{2t}(x_n)\setminus B_t(x_n)}( e^{2u_n} + |\psi_n|^4)dx&=0,\\ \lim_{T\to\infty}  \lim_{R\to +\infty}\lim_{n\to\infty}  \sup_{t\in [\lambda_nR,T^{-1}t_n]}\int_{B_{2t}(x_n)\setminus B_t(x_n)}( e^{2u_n} + |\psi_n|^4)dx &=0,
\end{align*} which implies the energy identity
\begin{equation}\label{equat:14}
\lim_{T\to\infty}\lim_{\delta\to 0}\lim_{n\to\infty}\int_{B_\delta(p)\setminus B_{Tt_n}(x_n)}|\psi_n|^4dx=\lim_{T\to\infty}\lim_{R\to +\infty}\lim_{n\to\infty}\int_{B_{T^{-1}t_n}(x_n)\setminus B_{\lambda_nR}(x_n)}|\psi_n|^4dx=0.
\end{equation}

By \eqref{equat:14} and \eqref{equat:15}, we have
\begin{align*}
&\lim_{R\to\infty}\lim_{\delta\to 0}\lim_{n\to\infty}\int_{B_\delta(p)\setminus B_{\lambda_nR}(x_n)}e^{u_n}|\psi_n|^2dx\\
&=\lim_{T\to\infty}\lim_{\delta\to 0}\lim_{n\to\infty}\int_{B_\delta(p)\setminus B_{Tt_n}(x_n)}e^{u_n}|\psi_n|^2dx+ \lim_{T\to\infty}\lim_{n\to\infty}\int_{B_{Tt_n}(x_n)\setminus B_{T^{-1}t_n}(x_n)}e^{u_n}|\psi_n|^2dx\\&\quad +\lim_{T\to\infty}\lim_{R\to +\infty}\lim_{n\to\infty}\int_{B_{T^{-1}t_n}(x_n)\setminus B_{\lambda_nR}(x_n)}e^{u_n}|\psi_n|^2dx=0.
 \end{align*} Then \eqref{equat:11} follows immediately from \eqref{equat:13}.

\end{proof}

\

\begin{lem}\label{lem:06}
Suppose $\Sigma_3\neq \emptyset$. Then for each $x_i\in\Sigma\setminus \Sigma_3$, the following two alternatives hold:
\begin{itemize}
\item[(1)] If $m(x_i)=4\pi$, then among the bubbles $(u^{i,k},\psi^{i,k})$, $k=1,...,L_i$ in Theorem \ref{thm:main-2}, there is only one super-Liouville type bubble (w.l.o.g, denoted by $u^{i,1}$) where the other bubbles are all spinorial Yamabe type bubble. Moreover, we have following energy identity $$\lim_{\delta\to 0}\lim_{n\to\infty}\int_{B_\delta(x_i)}e^{2u_n}dx=\int_{S^2}e^{2u^{i,1}}dx.$$
\item[(2)] If $m(x_i)=0$, then the bubbles $(u^{i,k},\psi^{i,k})$, $k=1,...,L_i$ in Theorem \ref{thm:main-2} are all spinorial Yamabe type bubble. Moreover, we have following energy identity $$\lim_{\delta\to 0}\lim_{n\to\infty}\int_{B_\delta(x_i)}e^{2u_n}dx=0.$$
\end{itemize}
\end{lem}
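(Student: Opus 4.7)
The plan is to mirror the two-step strategy of Lemma \ref{lem:05}: first classify the bubble tree at $p = x_i$ via mass additivity, then deduce the function energy identity from integration by parts combined with the spinor energy identity. The only structural difference compared to Lemma \ref{lem:05} is that $p \notin \Sigma_3$ forces the \emph{first} bubble at $p$ (constructed in Case (ii) at the beginning of Section \ref{sec:energy-identity-spinor}) to be of spinorial Yamabe type; any super-Liouville type bubble must be produced at a deeper scale via the iteration of Lemma \ref{lem:04}.

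For the bubble classification, I would iterate the identity \eqref{equat:13} across every scale of the bubble tree at $p$ and combine it with Proposition \ref{prop:01}. Each super-Liouville type bubble has $\int_{\mathbb{R}^2}(2e^{2u}-e^u|\psi|^2)\,dx = 4\pi$, each spinorial Yamabe type bubble satisfies $u \equiv -\infty$ and therefore contributes $0$, and each neck contributes $0$ by integration by parts on the boundary circles (using Lemma \ref{lem:Green funciton for u_n} on the outside and the boundary flux of the rescaled bubble on the inside). This yields $m(p) = 4\pi \cdot \#\{\text{super-Liouville type bubbles at }p\}$, and Theorem \ref{thm:main-4} then forces this number to be $1$ in Case (1) and $0$ in Case (2), with all other bubbles being spinorial Yamabe type.

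The function energy identity is then obtained by the same integration-by-parts + Hölder argument as in Lemma \ref{lem:05}. In Case (2), every rescaled function tends uniformly to $-\infty$ on compacts, and by Theorem \ref{thm:main-3} also $u_n \to -\infty$ uniformly on compact subsets of $M\setminus\Sigma$, so $\int_{\text{bubble region}} e^{2u_n}\,dx \to 0$ directly. On each neck, \eqref{equat:13} gives $\int(2e^{2u_n}-e^{u_n}|\psi_n|^2)\,dx \to 0$, while the Hölder estimate
\[
\int_{\text{neck}} e^{u_n}|\psi_n|^2\,dx \;\le\; \Bigl(\int_{\text{neck}} e^{2u_n}\,dx\Bigr)^{1/2}\Bigl(\int_{\text{neck}}|\psi_n|^4\,dx\Bigr)^{1/2},
\]
together with the uniform $L^1$ bound on $e^{2u_n}$ and the spinor energy identity of Theorem \ref{thm:main-2}, gives $\int_{\text{neck}} e^{u_n}|\psi_n|^2\,dx \to 0$. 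Summing these contributions, $\int_{B_\delta(x_i)} e^{2u_n}\,dx \to 0$. In Case (1), the same neck-and-bubble decomposition applies, but now the bubble region attached to the unique super-Liouville type bubble $u^{i,1}$ contributes $\int_{S^2} e^{2u^{i,1}}\,dx$ in the limit, by $C^2_{loc}$ convergence on the rescaled ball plus Corollary \ref{removability at infinity}, while every other bubble region and every neck contributes $0$ by the argument just given.

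The main obstacle, identical to the one already encountered in Lemma \ref{lem:05} Case II, is the multi-scale neck analysis when the unique super-Liouville type bubble in Case (1) coexists with spinorial Yamabe type bubbles at strictly smaller or larger scales. At each intermediate scale the trichotomy of Lemma \ref{lem:04} must be applied to rule out a hidden additional bubble, and the Pohozaev constant computed for the limit domain at each removable singularity must force the corresponding $\gamma = 0$, exactly as in Case 3 of that lemma. I would therefore carry out the argument by induction on the total number of bubbles $L_i$ at $p$, parallel to the induction structure used in Lemma \ref{lem:05}.
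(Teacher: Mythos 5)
Your proposal correctly identifies the final target (mass additivity plus integration by parts), and your Cauchy--Schwarz observation $\int_{\mathrm{neck}} e^{u_n}|\psi_n|^2 \le \bigl(\int_{\mathrm{neck}} e^{2u_n}\bigr)^{1/2}\bigl(\int_{\mathrm{neck}}|\psi_n|^4\bigr)^{1/2}$ is a cleaner way to kill the cross-term on the neck than what the paper carries out, given the global $L^1$ bound on $e^{2u_n}$ and the spinor energy identity. But the central claim you assume --- that ``each neck contributes $0$ by integration by parts on the boundary circles'' --- is not justified by flux telescoping alone, and in fact fails as stated for $p\in\Sigma\setminus\Sigma_3$. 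The innermost rescaled bubble is spinorial Yamabe type, so $\tilde u_n\to-\infty$ and the inner flux $\int_{\partial B_R}\partial_r\tilde u_n$ tends to $0$, while the outer flux tends to $m(p)=4\pi$ by Lemma \ref{lem:Green funciton for u_n}. Under the one-bubble hypothesis, the integration-by-parts identity therefore assigns the \emph{neck} mass $4\pi$, not $0$. Your deduction ``mass additivity $\Rightarrow$ exactly one super-Liouville bubble'' is precisely what has to be proved, and the flux bookkeeping cannot supply it without first knowing that the super-Liouville bubble exists somewhere in the tree.

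What your proposal is missing is the ingredient the paper supplies in Claims 1 and 2, namely the radial ODE estimate of Lemma \ref{lem:02}: once the interior mass $\int_{B_{s_n^1}}(2e^{2u_n}-e^{u_n}|\psi_n|^2)$ passes $2\pi b$ with $b>1$, the averaged function $\bar u_n(r)$ decays strictly faster than $-\ln r$, forcing $\int_{B_{s_n^2}\setminus B_{s_n^1}}e^{2u_n}\to 0$ (and, with Lemma \ref{lem:01}, fast decay and bounded oscillation on dyadic annuli). This is what converts the hypothetical $4\pi$ of ``neck mass'' into a contradiction: by the intermediate value theorem one can choose $s_n^1$ where the interior mass is $3\pi$, Lemma \ref{lem:02} then annihilates the outer annular contribution, and one cannot account for the remaining $\pi$ --- hence energy \emph{must} concentrate at an intermediate scale, and that concentration is classified (via the Pohozaev constant argument you do cite) as the unique super-Liouville bubble. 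Without Lemma \ref{lem:02} your induction has no base: you cannot rule out the configuration in which every bubble is spinorial Yamabe type and the entire mass $4\pi$ is diffused across the necks. I would revise the proposal to state Claim 1 explicitly and to route its proof through Lemmas \ref{lem:01} and \ref{lem:02}; after that, the remainder of your outline (Pohozaev constant $\gamma=0$ at removable singularities, induction on the number of bubbles, Cauchy--Schwarz on the cross-term) matches the paper's argument and is sound.
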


\

Before giving its proof, we first show following two lemmas.

\

For $p\in \Sigma$, taking $r_0\in (0,1)$ such that $p\in B_{r_0}(p)$ is the only blow-up point in $B_{r_0}(p)$, by Lemma \ref{lem:Green funciton for u_n}, we have $osc_{\partial B_{r_0}(p)}u_n\leq C$. Using the isothermal coordinates at $p$, then $(B_{r_0}(p),g)$ isometric to $(B_1(0),e^{2\xi(x)}((dx^1)^2+(dx^2)^2))$, where $\xi(x)$ is a smooth function with $\xi(0)=0$. Without loss of generality, we assume $\xi(x)\equiv 1$. Otherwise, we just need to consider its conformal transformation that $\tilde{u}_n(x)=u_n\circ f+\xi(x),\ \tilde{\psi}_n(x)=e^{\frac{1}{2}\xi(x)}\psi_n\circ f$ where $f$ is the isometric map.

\

Let $(x_n,\lambda_n)$ be the first bubble at $p$ constructed at the beginning of Section \ref{sec:energy-identity-spinor}. If there is no bubble in the neck domain, then we have the following lemma.

\begin{lem}\label{lem:01}
If \begin{equation}\label{equat:25}
\lim_{\delta\to 0} \lim_{R\to\infty} \lim_{n\to\infty}  \sup_{t\in [\lambda_nR,\delta]}\int_{B_{2t}(x_n)\setminus B_t(x_n)}( e^{2u_n} + |\psi_n|^4)dx =0,
\end{equation} then there exists a big constant $N_0>0$ such that for $\delta^{-1},R>N_0$, the following two alternatives hold:
\begin{itemize}
\item[(1)] For any $t_n\in [2\lambda_nR, \delta]$, $u_n(x)$ has fast decay on $\partial B_{t_n}(x_n)$, i.e. $$\lim_{n\to\infty}\max_{x\in\partial B_{t_n}(x_n)}\left(u_n(x)+\ln |x-x_n|\right)=-\infty.$$

\

\item[(2)]  $$osc_{B_{\frac{1}{2}d_n}(x)}u_n(y)\leq C,\ \ \forall\ \ x\in B_{\delta}(x_n)\setminus B_{8\lambda_nR}(x_n),$$ when $n$ is big enough, where $d_n:=|x-x_n|$.

 \end{itemize}
\end{lem}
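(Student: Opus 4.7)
The plan is to first establish the oscillation control (2), and then to deduce the fast‑decay estimate (1) as a short consequence of (2) combined with the no‑neck‑bubble hypothesis \eqref{equat:25}.

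For part (2), fix a point $x$ with $d_n := |x - x_n| \in [8\lambda_n R,\delta]$ and rescale by setting
\begin{equation*}
\tilde u_n(y) := u_n\!\bigl(x + \tfrac{d_n}{2} y\bigr) + \ln\tfrac{d_n}{2}, \qquad \tilde\psi_n(y) := \bigl(\tfrac{d_n}{2}\bigr)^{1/2}\, \psi_n\!\bigl(x + \tfrac{d_n}{2} y\bigr),
\end{equation*}
so that $(\tilde u_n,\tilde\psi_n)$ solves a system of the form \eqref{equat:07} on $B_2(0)$ with uniformly bounded (and $C^1$-bounded) lower‑order coefficients. Since $B_{d_n}(x) \subset B_{2 d_n}(x_n) \setminus B_{d_n/2}(x_n)$ can be covered by a bounded number of dyadic annuli $B_{2t}(x_n) \setminus B_{t}(x_n)$ with $t \in [\lambda_n R,\delta]$, hypothesis \eqref{equat:25} yields $E(\tilde u_n,\tilde\psi_n; B_2) < \epsilon_1$ once $n$ is large and $R,\delta^{-1}$ are big enough. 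To invoke Lemma \ref{lem:small-energy-regul}(3), the missing ingredient is a uniform boundary‑oscillation bound $osc_{\partial B_1(0)} \tilde u_n \le C$, which I supply by a downward dyadic iteration starting from $\partial B_\delta(x_n)$: Lemma \ref{lem:Green funciton for u_n} yields $osc_{\partial B_\delta(x_n)} u_n \le C$ (since $u_n - \bar u_n \to G$ in $C^2_{loc}$ away from $\Sigma$), and applying Lemma \ref{lem:small-energy-regul}(3) on the rescaled annular picture with outer radius $\delta$ transfers oscillation control to $\partial B_{\delta/2}(x_n)$; iterating finitely many times down to radius $\sim \lambda_n R$ propagates $osc_{\partial B_t(x_n)} u_n \le C$ uniformly for every $t$ in the admissible range, hence $osc_{\partial B_1(0)} \tilde u_n \le C$. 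Lemma \ref{lem:small-energy-regul}(3) then gives $osc_{B_{1/2}(0)} \tilde u_n \le C$, i.e. $osc_{B_{d_n/4}(x)} u_n \le C$, which implies (2).

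For part (1), let $y_n \in \partial B_{t_n}(x_n)$ realise $\max_{\partial B_{t_n}(x_n)}(u_n + \ln|\cdot - x_n|)$. Since $t_n \ge 2\lambda_n R$, part (2) applies at $x = y_n$ with $d_n = t_n$, yielding $|u_n(y) - u_n(y_n)| \le C$ for $y \in B_{t_n/4}(y_n)$. Therefore
\begin{equation*}
\int_{B_{t_n/4}(y_n)} e^{2 u_n}\, dy \;\ge\; \tfrac{\pi}{16}\, e^{-2C}\, t_n^2\, e^{2 u_n(y_n)}.
\end{equation*}
Because $B_{t_n/4}(y_n) \subset B_{5t_n/4}(x_n)\setminus B_{3t_n/4}(x_n)$, which is covered by a bounded number of dyadic annuli $B_{2t}(x_n)\setminus B_t(x_n)$ with $t \in [\lambda_n R,\delta]$, assumption \eqref{equat:25} forces the left‑hand integral to $0$ as $n \to \infty$, $R \to \infty$, $\delta \to 0$. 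Hence $t_n^2 e^{2 u_n(y_n)} \to 0$, equivalently $u_n(y_n) + \ln t_n \to -\infty$, which is (1).

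The main obstacle is the downward dyadic iteration in step (2): the number of dyadic levels between radius $\delta$ and radius $\lambda_n R$ is of order $|\log(\delta/(\lambda_n R))|$, which is unbounded in $n$, so one must verify that the oscillation constant $C$ does not accumulate along the chain. This is guaranteed by the universality of the small‑energy threshold $\epsilon_1$ in Lemma \ref{lem:small-energy-regul} and by the scale‑invariant form of the hypothesis $osc_{\partial B_1} \tilde u_n \le C$, together with the uniformly bounded (and $C^1$-bounded) rescaled coefficients; a single universal constant then governs the propagation at every dyadic level.
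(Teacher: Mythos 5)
Your plan reverses the paper's order: the paper proves the fast‑decay estimate (1) \emph{first} by a single rescaling plus a contradiction argument (non‑decay at some scale $t_n$ would produce a nontrivial bubble in the neck, contradicting \eqref{equat:25}), and only then establishes (2) via a global Green's‑function representation of $u_n$ that uses the fast decay to control the near‑field singularity of the kernel. Your deduction of (1) from (2) is fine in spirit (modulo the fact that (2) as stated only applies for $|x-x_n|\ge 8\lambda_n R$, so the window $t_n\in[2\lambda_nR,8\lambda_nR)$ would still need a separate bubble‑convergence argument), but it is the proof of (2) where the proposal breaks down.

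The central difficulty you flag at the end — accumulation of the oscillation constant over the downward dyadic iteration — is not resolved by the universality of $\epsilon_1$ or scale invariance, and is in fact fatal to the approach. Each application of Lemma~\ref{lem:small-energy-regul}(3) transfers $\mathrm{osc}_{\partial B_1}\tilde u_n\le C$ to $\mathrm{osc}_{B_{1/2}}\tilde u_n\le C+2\|\tilde u_n^1\|_{C^0}$, where $\tilde u_n^1$ is the Newtonian part; its $C^0$ norm is bounded by a \emph{fixed} positive constant depending on the (small but fixed) annular energy, not by a quantity that vanishes as you descend. After $k\sim\log_2\!\bigl(\delta/(\lambda_n R)\bigr)\to\infty$ dyadic steps the bound grows linearly in $k$ and is not uniform. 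There is also a conceptual problem with the iteration before one even reaches the quantitative issue: Lemma~\ref{lem:small-energy-regul}(3) is formulated on a full disk, and any disk concentric with $x_n$ that reaches the radius you want contains the bubble at $x_n$, so the small‑energy hypothesis fails there; one would need an annular version, but on an annulus a harmonic comparison function has an uncontrolled $\log|x|$ mode that cannot be excluded without knowing the inner boundary data — which is exactly what you are trying to prove. Finally, a smaller slip: $B_{d_n}(x)\not\subset B_{2d_n}(x_n)\setminus B_{d_n/2}(x_n)$, since $x_n\in\partial B_{d_n}(x)$ and $B_{d_n}(x)$ absorbs roughly half the first bubble's energy; you would need to rescale to $B_{d_n/2}(x)$ (or smaller), which does sit inside the annulus. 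The paper sidesteps all of this by first getting the pointwise decay $u_n\le C-\ln|x-x_n|$ from (1), and then estimating $w_n(x_1)-w_n(x_2)$ through the Green's function, whose log kernel is summed once globally instead of level by level, so no constant accumulates.
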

\begin{proof}
We divide the proof into two steps according to two conclusions.

\

\textbf{Step 1:} By \eqref{equat:25}, for any small constant $\epsilon>0$, there  exist $\delta_0=\delta_0(\epsilon)>0$, $R_0=R_0(\epsilon)>0$ and $N_0=N_0(\epsilon, \delta, R)$ such that for any $\delta<\delta_0$, $R>R_0$, we have $$\sup_{t\in [\lambda_nR,\delta]}\int_{B_{2t}(x_n)\setminus B_t(x_n)}( e^{2u_n} + |\psi_n|^4)dx \leq \epsilon,$$ when $n>N_0$.

For any $2\lambda_nR\leq t_n\leq \frac{1}{2}\delta$, let $$(v_n(x), \phi_n(x)):=(u_n(x_n+t_nx)+\ln t_n, \sqrt{t_n}\psi_n(x_n+t_nx)).$$ Then it is easy to see that $(v_n,\phi_n)$ satisfies
\begin{equation*}
  \begin{cases}
    -\Delta v_n = 2e^{2v_n} - e^{v_n}|\phi_n|^2,\\
    \slashed{D}\phi_n = -(e^{v_n} + 2F(x_n+t_nx)|\phi_n|^2)\phi_n,
  \end{cases}\ \ x \in \ \  B_{2}(0)\setminus B_{2^{-1}}(0)
\end{equation*} and $$E(v_n,\phi_n;B_{2}(0)\setminus B_{2^{-1}}(0))\leq C\epsilon.$$ Taking $\epsilon$ small such that $C\epsilon<\epsilon_1$ where $\epsilon_1$ is the constant in Lemma \ref{lem:small-energy-regul}, then by Lemma \ref{lem:small-energy-regul} we have $$\max_{ B_{\frac{3}{2}}(0)\setminus B_{\frac{3}{4}}(0)}v_n(x)+\|\phi_n\|_{L^\infty(B_{\frac{3}{2}}(0)\setminus B_{\frac{3}{4}}(0))}\leq C.$$
We claim:
\begin{equation}\label{equat:27}
\lim_{n\to\infty}\max_{ B_{\frac{3}{2}}(0)\setminus B_{\frac{3}{4}}(0)}v_n(x)= -\infty.
\end{equation}
Otherwise, by Lemma \ref{lem:small-energy-regul}, there holds $\|v_n\|_{L^\infty(B_{\frac{3}{2}}(0)\setminus B_{\frac{3}{4}}(0))}\leq C$. Then a standard elliptic estimate yields $(v_n,\phi_n)$ subconverges to $(v,\phi)$ in $C^2(B_{\frac{5}{4}}(0)\setminus B_{\frac{7}{8}}(0))$, where $(v,\phi)$ satisfies Liouville type bubble's equation. Thus we have
\begin{align*}
\lim_{n\to\infty}\int_{B_{2t_n}(x_n)\setminus B_{t_n}(x_n)}( e^{2u_n} + |\psi_n|^4)dx&= \lim_{n\to\infty}\int_{B_{2}(0)\setminus B_1(0)}( e^{2v_n} + |\phi_n|^4)dx\\&\geq \int_{B_{\frac{5}{4}}(0)\setminus B_{\frac{7}{8}}(0)}( e^{2v_n} + |\phi_n|^4)dx\geq C>0,
\end{align*} which is a contradiction to \eqref{equat:25}. Thus \eqref{equat:27} holds which immediately implies that $u_n(s)$ has fast decay on $\partial B_{t_n}(x_n)$ for $2\lambda_nR\leq t_n\leq \frac{1}{2}\delta$. By Lemma \ref{lem:Green funciton for u_n}, we get $osc_{B_\delta(x_n)\setminus B_{\frac{1}{2}\delta}(x_n)}u_n\leq C$. Then it is easy to see the first conclusion holds.

\

\textbf{Step 2:} Let $\eta_n$ be the solution of
\begin{align*}
\begin{cases}
-\Delta \eta_n(x)=0,\ \ &in\ \ B_{1}(0),\\
\eta_n(x)=u_n(x),\ \ &on\ \ \partial B_{1}(0).
\end{cases}
\end{align*} It is clear that $$osc_{B_{1}(0)}\eta_n\leq osc_{\partial B_{1}(0)}u_n\leq C.$$

Let $w_n=u_n-\eta_n$ and $$G(x,y)=-\frac{1}{2\pi}\ln |x-y|+H(x,y)$$ be the Green's function on $B_{1}(0)$ with respect to the Dirichlet boundary, where $H(x,y)$ is a smooth harmonic function. Then $$w_n(x)=\int_{B_1(0)}G(x,y)(-\Delta w_n)(y)dy.$$

For $x\in B_{\frac{1}{8}\delta}(x_n)\setminus B_{8\lambda_nR}(x_n)$ and $x_1,x_2\in B_{\frac{1}{2}d_n}(x)$, then
\begin{align*}
w_n(x_1)-w_n(x_2)&=\int_{B_1}(G(x_1,y)-G(x_2,y))(-\Delta w_n(y))dy\\
&=\frac{1}{2\pi}\int_{B_1}\log\frac{|x_1-y|}{|x_2-y|}(\Delta w_n(y))dy+O(1).
\end{align*}
We divide the integral into two parts, i.e. $B_1=B_{\frac{3}{4}d_n}(x)\cup B_1\setminus B_{\frac{3}{4}d_n}(x)$. Noting that $$\bigg|\log\frac{|x_1-y|}{|x_2-y|}\bigg|\leq C,\ \ y\in B_1\setminus B_{\frac{3}{4}d_n}(x),$$ we have $$\bigg|\int_{B_1\setminus B_{\frac{3}{4}d_n}(x)}\log\frac{ |x_1-y|}{ |x_2-y|}(\Delta w_n(y))dy \,\bigg|\leq C.$$
A direct computation yields
\begin{align*}
  &\bigg|\int_{B_{\frac{3}{4}d_n}(x)}\log\frac{ |x_1-y|}{ |x_2-y|}(\Delta w_n(y))dy \,\bigg|\\&\leq C\int_{B_{\frac{3}{4}d_n}(x)}\bigg|\log\frac{ |x_1-y|}{ |x_2-y|}\bigg| (e^{2u_n(y)}+e^{u_n(y)}|\psi_n(y)|^2)dy\\
  &=C\int_{B_{\frac{3}{4}}(0)}\bigg| \log\frac{ |x_1-x-d_ny|}{ |x_2-x-d_ny|}\bigg| (e^{2u_n(x+d_ny)}+e^{u_n(x+d_ny)}|\psi_n(x+d_ny)|^2)(d_n)^2dy\\
  &=C\int_{B_{\frac{3}{4}}(0)}\bigg| \log\frac{ \big|\frac{x_1-x}{d_n}-y\big|}{ \big|\frac{x_2-x}{d_n}-y\big|}\bigg| (e^{2u_n(x+d_ny)}+e^{u_n(x+d_ny)}|\psi_n(x+d_ny)|^2)(d_n)^2dy.
\end{align*} Since  $$x+d_ny\in B_{\frac{1}{2}\delta}(x_n)\setminus B_{2\lambda_nR}(x_n),\ \ |x+d_ny-x_n|\geq \frac{1}{4}d_n,\ \ \forall\ y\in B_{\frac{3}{4}},$$ by the first conclusion, we have $$u_n(x+d_ny)\leq C-\ln |x+d_ny-x_n|\leq C-\ln d_n,\ \ \sqrt{d_n}|\psi_n(x+d_ny)|\leq C,\ \  \forall \ y\in B_{\frac{3}{4}}.$$ From  $$\big|\frac{x_1-x}{d_n}\big|\leq\frac{1}{2},\ \ \big|\frac{x_2-x}{d_n}\big|\leq\frac{1}{2},$$ it is easy to conclude that $$ \big|\int_{B_{\frac{3}{4}d_n}(x)}
\log\frac{ |x_1-y|}{ |x_2-y|}(\Delta w_n(y))dy \big|\leq C.$$ This implies $$|w_n(x_1)-w_n(x_2)|\leq C.$$ Then we have $$osc_{B_{\frac{1}{2}d_n}(x)}u_n(y)\leq C,\ \ \forall\ \ x\in B_{\frac{1}{8}\delta}(x_n)\setminus B_{8\lambda_nR}(x_n).$$

For $x\in B_{\delta}(x_n)\setminus B_{\frac{1}{8}\delta}(x_n)$, by Lemma \ref{lem:Green funciton for u_n}, we have
\begin{align*}
osc_{B_{\frac{1}{2}d_n}(x)}u_n(y)\leq osc_{B_{\frac{3}{2}\delta}(x_n)\setminus B_{\frac{1}{16}\delta}(x_n)}u_n(y)\leq C,
\end{align*} which immediately yields the second conclusion.
\end{proof}

\

\begin{lem}\label{lem:02}
Under assumptions and notations  of Lemma \ref{lem:01}, let $8\lambda_nR\leq s_n^1<s_n^2\leq \delta$ with $\lim_{n\to\infty}\frac{s_n^2}{s_n^1}=+\infty$ and $$\int_{B_{s_n^1}(x_n)}(2e^{2u_n}-e^{u_n}|\psi_n|^2)dx=2\pi b+o(1),$$ where $b>1$, then we have $$\lim_{n\to\infty}\int_{B_{s_n^2}(x_n)\setminus B_{s_n^1}(x_n)}e^{2u_n}dx=0.$$
\end{lem}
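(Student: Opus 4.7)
\textbf{The plan} is to reduce the neck estimate to an ODE for the spherical average $U_n(r):=\frac{1}{2\pi r}\int_{\partial B_r(x_n)} u_n\, d\sigma$ and then bootstrap it using the fact that, absent concentration in the neck, the ``$u$-mass''
\[
m_n(r):=\int_{B_r(x_n)}\bigl(2e^{2u_n}-e^{u_n}|\psi_n|^2\bigr)dx
\]
stays close to its value $2\pi b$ at $r=s_n^1$. Since $b>1$, this forces enough pointwise decay of $e^{u_n}$ over the neck $A_n:=B_{s_n^2}(x_n)\setminus B_{s_n^1}(x_n)$ for $\int_{A_n}e^{2u_n}dx$ to vanish.

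\textbf{Key tools.} First I would upgrade Lemma \ref{lem:01}(1) to uniform fast decay: by a short contradiction argument over sequences $t_n\in[2\lambda_nR,\delta]$,
\[
\sup_{t\in[2\lambda_nR,\delta]}\max_{\partial B_t(x_n)}\bigl(u_n+\ln|x-x_n|\bigr)\longrightarrow-\infty,
\]
so $u_n(x)+\ln|x-x_n|\le\varepsilon_n\to-\infty$ uniformly on $B_\delta(x_n)\setminus B_{2\lambda_nR}(x_n)$. Chaining the oscillation bound in Lemma \ref{lem:01}(2) by a finite covering of each circle $\partial B_r(x_n)$ yields the Harnack-type comparison $u_n(x)\le U_n(|x-x_n|)+C$ on $A_n$. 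Integrating the $u$-equation over $B_r(x_n)$ and applying the divergence theorem gives the basic ODE
\[
r\,U_n'(r)=-\frac{m_n(r)}{2\pi}+O(r^2),
\]
which converts mass control into pointwise control of $u_n$.

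\textbf{Bootstrap.} Fix $\varepsilon_0\in(0,b-1)$ and set $\sigma_n:=\sup\{s\in[s_n^1,s_n^2]:m_n(r)\ge 2\pi(b-\varepsilon_0)\text{ for all }r\in[s_n^1,s]\}$. On $[s_n^1,\sigma_n]$ the ODE yields $U_n(r)\le U_n(s_n^1)-(b-\varepsilon_0)\ln(r/s_n^1)+O(\delta^2)$; combined with the Harnack comparison this gives $e^{2u_n(x)}\le C\,e^{2U_n(s_n^1)}\bigl(s_n^1/|x-x_n|\bigr)^{2(b-\varepsilon_0)}$. Integrating in polar coordinates and using $2(b-\varepsilon_0)>2$,
\[
\int_{B_{\sigma_n}(x_n)\setminus B_{s_n^1}(x_n)}e^{2u_n}\,dx\le C\,e^{2U_n(s_n^1)}(s_n^1)^2,
\]
which tends to $0$ because the uniform decay above forces $e^{U_n(s_n^1)}\,s_n^1\to 0$. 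Cauchy--Schwarz with $\int_M|\psi_n|^4\le C$ then yields $\int_{B_{\sigma_n}\setminus B_{s_n^1}}e^{u_n}|\psi_n|^2\,dx=o(1)$, so $m_n(\sigma_n)-m_n(s_n^1)=o(1)$. If $\sigma_n<s_n^2$, continuity of $m_n$ would give $m_n(\sigma_n)=2\pi(b-\varepsilon_0)$, contradicting $m_n(s_n^1)=2\pi b+o(1)$. Therefore $\sigma_n=s_n^2$ and the same estimate on the full interval yields the lemma.

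\textbf{Main obstacle.} The delicate point is closing the bootstrap: to control $m_n$ one needs $\int e^{u_n}|\psi_n|^2=o(1)$, which feeds back into $\int e^{2u_n}=o(1)$. The circularity is broken by the strict inequality $b>1$: it makes $\int_{s_n^1}^{\sigma_n} r^{1-2(b-\varepsilon_0)}dr$ convergent at the lower endpoint, so the decay $e^{U_n(s_n^1)}\,s_n^1\to 0$ at a single scale already suffices. Were $b\le 1$ this radial integral would diverge and the argument would fail, matching the heuristic that $b>1$ is precisely the threshold above which no additional bubble can be absorbed silently in the neck.
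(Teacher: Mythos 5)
Your proposal is correct, and the core mechanics (radial ODE for $\bar u_n$, chained oscillation bound giving the Harnack comparison, fast decay at $\partial B_{s_n^1}$, polar integration convergent because $b>1$) match the paper's. The one genuine structural difference is how you control the spinor term. The paper observes that, under the hypothesis of Lemma \ref{lem:01}, the spinor energy identity gives $\int_{\mathrm{neck}}|\psi_n|^4\,dx=o(1)$ directly; Cauchy--Schwarz with the global bound $\int_M e^{2u_n}\le C$ then yields $\int_{\mathrm{neck}} e^{u_n}|\psi_n|^2\,dx=o(1)$ \emph{a priori}, so the mass $m_n(r)$ stays within $o(1)$ of $2\pi b$ over the whole annulus and the derivative bound $r\bar u_n'(r)\le -\tfrac{b+1}{2}$ is immediate for all $r\in[s_n^1,s_n^2]$ --- no continuation argument needed. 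You instead use only the crude global bound $\int_M|\psi_n|^4\le C$ and run a bootstrap: define $\sigma_n$ as the first exit time of $m_n$ from $[2\pi(b-\varepsilon_0),\infty)$, derive $\int_{B_{\sigma_n}\setminus B_{s_n^1}}e^{2u_n}=o(1)$, feed it back through Cauchy--Schwarz to get $\int e^{u_n}|\psi_n|^2=o(1)$, and conclude $\sigma_n=s_n^2$. Both closes; yours is more self-contained (it never explicitly invokes the spinor energy identity on the neck), while the paper's is shorter by using that identity as an input. Your remark that ``the circularity is broken by $b>1$'' is a little misleading: the paper has no circularity to break, and $b>1$ plays the same role in both arguments, namely making $\int_{s_n^1}^{\cdot} r^{1-2(b-\varepsilon_0)}\,dr$ convergent so that the neck integral is controlled by $e^{2\bar u_n(s_n^1)}(s_n^1)^2\to 0$.
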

\begin{proof}
Suppose the conclusion is false. Then there exists $\delta_0>0$ such that \begin{equation}\label{equat:26}\int_{B_{s_n^2}(x_n)\setminus B_{s_n^1}(x_n)}e^{2u_n}dx>\delta_0,\end{equation} when $n$ is big enough.

Denote $$\bar{u}_n(r):=\frac{1}{2\pi r}\int_{\partial B_r(x_n)}u_n(x)d\theta.$$ Then
\begin{align*}
\frac{d}{dr}\bar{u}_n(r)=\frac{1}{2\pi r}\int_{ B_r(x_n)}\Delta u_n(x) dx=-\frac{1}{2\pi r}\int_{ B_r(x_n)}(2e^{2u_n}-e^{u_n}|\psi_n|^2) dx
\end{align*} and $$\frac{d}{dr}\bar{u}_n(r)=-\frac{b+o(1)}{r},\ \ for\ \ r=s_n^1.$$
Furthermore, for any $r\in [s_n^1,s_n^2]$,  we obtain
\begin{align*}
r\frac{d}{dr}\bar{u}_n(r)&=s_n^1\frac{d}{dr}\bar{u}_n(s_n^1)-\frac{1}{2\pi}\int_{ B_r(x_n)\setminus B_{s_n^1}(x_n)}(2e^{2u_n}-e^{u_n}|\psi_n|^2) dx\\
&=s_n^1\frac{d}{dr}\bar{u}_n(s_n^1)-\frac{1}{2\pi}\int_{ B_r(x_n)\setminus B_{s_n^1}(x_n)}(2e^{2u_n}) dx+o(1)\\
&\leq -b+o(1)\leq -\frac{b+1}{2},
\end{align*} when $n$ is big enough, where the second equality follows from the spinor's energy identity.

This yields $$\bar{u}_n(r)\leq \bar{u}_n(s_n^1)-b\ln\frac{r}{s_n^1},\ \ \forall\ \ r\in [s_n^1,s^2_n].$$ Integrating above inequality over $[s_n^1,s_n^2]$, we get
\begin{align*}
\int_{B_{s_n^2}(x_n)\setminus B_{s_n^1}(x_n)}e^{2u_n(x)}dx\leq C\int_{B_{s^2_n}(x_n)\setminus B_{s_n^1}(x_n)}e^{2\bar{u}_n(r)}dx\leq \frac{C}{b-1}e^{2\bar{u}_n(s_n^1)}(s_n^1)^2=o(1),
\end{align*} where the first inequality follows from the second conclusion of Lemma \ref{lem:01}, the last equality follows from the fact that $u_n$ gas fast decay on $\partial B_{s_n^1}(x_n)$ which is derived from the first conclusion of Lemma \ref{lem:01}. This is a contradiction to \eqref{equat:26}. We proved the lemma.

\end{proof}

\

Now we prove Lemma \ref{lem:06}.
\begin{proof}[\textbf{Proof of Lemma \ref{lem:06}}]
Without loss of generality, we assume again that there are at most two bubbles at a considered blow-up point $p\in\Sigma\setminus \Sigma_3$.

We just show the proof for the case of $m(p)=4\pi$, since the other case $m(p)=0$ is similar and in fact easier.

By the process of constructing the first bubble, we know that there exist $x_n\to p$ and $\lambda_n\to 0$ such that   \begin{equation*}
\tilde{u}_n(x):=u_n (\lambda_n x+ x_n) + \log\lambda_n\to -\infty,\ \ \mbox{uniformly in any compact subset of }\R^2 \end{equation*} and $$\tilde{\psi}_n(x) := \lambda_n^{\frac{1}{2}} \psi_n (\lambda_n x + x_n) \to \tilde{\psi}\ \ in\ \ C^2_{loc}(\R^2)$$ where $\tilde{\psi}$ is a spinorial Yamabe type bubble.

Next, we just need to show that there must have a second bubble $(v,\phi)$ which is a super-Liouville type bubble. Moreover, it satisfies the following energy identity
\begin{equation}
\lim_{\delta\to 0}\lim_{n\to\infty}\int_{B_\delta(p)}e^{2u_n}dx=\int_{S^2}e^{2v}dx.
\end{equation}

\

We firstly have following two claims:

\

\textbf{Claim 1:} There must be a energy concentration in neck domain, i.e.
\begin{equation*}
\lim_{\delta\to 0} \lim_{R\to\infty} \lim_{n\to\infty}  \sup_{t\in [\lambda_nR,\delta]}\int_{B_{2t}(x_n)\setminus B_t(x_n)}( e^{2u_n} + |\psi_n|^4)dx \geq\epsilon_0>0.
\end{equation*}
Otherwise, we know that Lemma \ref{lem:01} and Lemma \ref{lem:02} hold.

Since $m(p)=4\pi$, then we have
\begin{align}\label{inequ:08}
4\pi&=\lim_{\delta\to 0}\lim_{n\to\infty}\int_{B_{\delta}(p)}(2e^{2u_n}-e^{u_n}|\psi_n|^2)dx\notag\\
&= \lim_{R\to\infty}\lim_{\delta\to 0}\lim_{n\to\infty}\int_{B_{\delta}(p)\setminus B_{\lambda_nR}(x_n)}(2e^{2u_n}-e^{u_n}|\psi_n|^2)dx+\lim_{R\to\infty}\lim_{\delta\to 0}\lim_{n\to\infty}\int_{ B_{\lambda_nR}(x_n)}(2e^{2u_n}-e^{u_n}|\psi_n|^2)dx\notag\\
&= \lim_{R\to\infty}\lim_{\delta\to 0}\lim_{n\to\infty} \int_{B_{\delta}(p)\setminus B_{\lambda_nR}(x_n)}(2e^{2u_n}-e^{u_n}|\psi_n|^2)dx.
\end{align}
Then there must exist $\lambda_nR<<\leq s_n^1<<\delta$ (the notation $s_n^1<<s_n^2$ means $\lim_{n\to\infty}\frac{s_n^2}{s_n^1}=+\infty$), such that $$\lim_{n\to\infty}\int_{B_{s_n^1}(p)}(2e^{2u_n}-e^{u_n}|\psi_n|^2)dx=3\pi.$$

By Lemma \ref{lem:02}, we get $$\lim_{n\to\infty}\int_{B_\delta (p)\setminus B_{s_n^1}(p)}(2e^{2u_n}-e^{u_n}|\psi_n|^2)dx=0.$$ This is a contradiction to \eqref{inequ:08} where we in fact have  $$\lim_{\delta\to 0}\lim_{n\to\infty}\int_{B_{\delta}(p)\setminus B_{s_n^1}(p)}(2e^{2u_n}-e^{u_n}|\psi_n|^2)dx=\pi.$$ Then we prove Claim $1$.

\

By the proof of Lemma \ref{lem:04}, there exist $\epsilon_0>0$, $\delta>0,\ R>100$ and sequence $t_n\in [\lambda_nR,\delta]$ such that \begin{equation*}
  \int_{B_{2t_n}(x_n)\setminus B_{t_n}(x_n)} (e^{2u_n} + |\psi_n|^4 )dx\ge \epsilon_0,\ \ \lim_{n\to\infty}\frac{t_n}{\lambda_nR}=\lim_{n\to\infty}\frac{\delta}{t_n}=+\infty.
\end{equation*}
Denote
$$(v_n(x), \phi_n(x)):=(u_n(x_n+t_nx)+\ln t_n, \sqrt{t_n}\psi_n(x_n+t_nx)).$$

\

\textbf{Claim 2:} The $Case\ 2 $ in Lemma \ref{lem:04} will not happen.

\

In fact, if not, then $v_n \rightarrow -\infty$ uniformly on all compact subset of $\mathbb{R}^2 \setminus \{0\}$, we have $\phi_n \rightarrow \phi$ in $C^2_{loc}(\mathbb{R}^2\setminus \{0\})$, where $\phi$ is a spinorial Yamabe type bubble which is the second bubble. It is easy to see that
\begin{equation}\label{equat:12}
\lim_{T\to\infty}\lim_{n\to\infty}\int_{B_{Tt_n}(x_n)\setminus B_{T^{-1}t_n}(x_n)}(2e^{2u_n}-e^{u_n}|\psi_n|^2)dx=0.\end{equation}
By \eqref{inequ:08} and \eqref{equat:12}, it is not hard to conclude  that there exists $s_n^1\in [\lambda_nR,\delta]$ such that $\lambda_nR<<s_n^1<<t_n$ or $t_n<<s_n^1<<\delta$ and $$\lim_{n\to\infty}\int_{B_{s_n^1}(p)}(2e^{2u_n}-e^{u_n}|\psi_n|^2)dx=3\pi.$$

Since there is no third bubble, we have
\begin{align*}
&\lim_{\delta\to 0} \lim_{T\to\infty} \lim_{n\to\infty}  \sup_{t\in [Tt_n,\delta]}\int_{B_{2t}(x_n)\setminus B_t(x_n)}( e^{2u_n} + |\psi_n|^4)dx \\&=\lim_{\delta\to 0} \lim_{T\to\infty} \lim_{n\to\infty}  \sup_{t\in [\lambda_nR,T^{-1}t_n]}\int_{B_{2t}(x_n)\setminus B_t(x_n)}( e^{2u_n} + |\psi_n|^4)dx=0.
\end{align*}
By Lemma \ref{lem:02} and \eqref{equat:12}, we get $$\lim_{n\to\infty}\int_{B_\delta (p)\setminus B_{s_n^1}(p)}(2e^{2u_n}-e^{u_n}|\psi_n|^2)dx=0.$$ This is a contradiction to \eqref{inequ:08}.

\

Now we have to consider the other two cases in Lemma \ref{lem:04}.

\

\textbf{Case 1 in Lemma \ref{lem:04}:} There exists $T>0$ such that $(v_n,\phi_n)$ has a energy concentration point $q\in  B_{T}(0)\setminus B_{T^{-1}}(0)$, i.e.
\begin{eqnarray}
  \lim_{n\rightarrow \infty}\int_{B_r(q)}( e^{2v_n} + |\phi_n|^4 )dx\ge \epsilon_0>0,\ \ \forall \ r>0.
\end{eqnarray}

In this case, we know that for sequence $(v_n,\phi_n)$, the blow-up point $q$ must be first type singularity. Otherwise, by the proof in \textbf{Case 1} of Lemma \ref{lem:05}, we get \eqref{equat:12}. By the proof of Claim $2$, this also implies a contradiction to \eqref{inequ:08}.

\

Then there exist $y_n\to q$, $\mu_n\to 0$ such that
\begin{align*}
(\tilde{v}_n(x),\tilde{\phi}_n(x)):=(v_n(y_n+\mu_nx)+\ln\mu_n,\sqrt{\mu_n}\phi_n(y_n+\mu_nx))\ \ \to (\tilde{v},\tilde{\phi})\ \ &in\ \ C^2_{loc}(\R^2),
\end{align*} where $(\tilde{v},\tilde{\phi})$ is a super-Liouville type bubble which is the second bubble.

Since there is no third bubble, by the discussion in \textbf{Case 1} of Lemma \ref{lem:05}, we have the following properties:

\begin{itemize}
\item[(1)]
 The energy identity
\begin{equation}
\lim_{R\to +\infty}\lim_{\delta\to 0}\lim_{n\to\infty}\int_{B_\delta(q)\setminus B_{\mu_nR}(y_n)}|\phi_n|^4dx=0.
\end{equation}

\item[(2)] The weak limit  of $(v_n,\phi_n)$ must be trivial, i.e. $v_n(x)\to -\infty$ uniformly in any compact subset of $\R^2\setminus \{0,q\}$ and $\phi_n(x)\to 0$ in $C^2_{loc}(\R^2\setminus \{0,q\})$.

\item[(3)] For any $T>100$, we have
\begin{equation}
\lim_{\delta\to 0}\lim_{n\to\infty}\int_{B_\delta(p)\setminus B_{Tt_n}(x_n)}|\psi_n|^4dx=\lim_{R\to +\infty}\lim_{n\to\infty}\int_{B_{T^{-1}t_n}(x_n)\setminus B_{\lambda_nR}(x_n)}|\psi_n|^4dx=0,
\end{equation} which implies $$\lim_{n\to\infty}\int_{B_{T^{-1}t_n}(x_n)}e^{2u_n}|\psi_n|^2dx=\lim_{n\to\infty}\int_{ B_{\lambda_nR}(x_n)}e^{2u_n}|\psi_n|^2dx=\lim_{n\to\infty}\int_{ B_{R}(0)}e^{2v_n}|\phi_n|^2dx=0.$$
\end{itemize}

We can conclude that
\begin{align*}
&\lim_{R\to\infty}\lim_{n\to\infty}\int_{\left(B_{Tt_n}(x_n)\setminus (B_{T^{-1}t_n}(x_n)\right)\setminus B_{t_n\mu_n R}(x_n+t_ny_n)}e^{u_n}|\psi_n|^2dx\\
&=\lim_{R\to\infty}\lim_{n\to\infty}\int_{\left(B_T(0)\setminus (B_{T^{-1}}(0)\right)\setminus B_{\mu_n R}(y_n)}e^{v_n}|\phi_n|^2dx\\
&=\lim_{\delta\to 0}\lim_{R\to\infty}\lim_{n\to\infty}\int_{\left(B_T(0)\setminus B_{T^{-1}}(0)\right)\setminus B_\delta (q)}e^{v_n}|\phi_n|^2dx-\lim_{\delta\to 0} \lim_{R\to\infty}\lim_{n\to\infty}\int_{B_\delta (q)\setminus B_{\mu_n R}(y_n)}e^{v_n}|\phi_n|^2dx=0.
\end{align*}

Thus,
\begin{align*}
&\lim_{\delta\to 0}\lim_{R\to\infty}\lim_{n\to\infty}\int_{B_{\delta}(p)\setminus B_{t_n\mu_n R}(x_n+t_ny_n)}e^{u_n}|\psi_n|^2dx\\
&=\lim_{\delta\to 0}\lim_{n\to\infty}\int_{B_\delta(p)\setminus B_{Tt_n}(x_n)}e^{u_n}|\psi_n|^2dx+ \lim_{R\to\infty}\lim_{n\to\infty}\int_{\left(B_{Tt_n}(x_n)\setminus (B_{T^{-1}t_n}(x_n)\right)\setminus B_{t_n\mu_n R}(x_n+t_ny_n)}e^{u_n}|\psi_n|^2dx\\
&\quad +\lim_{\delta\to 0}\lim_{n\to\infty}\int_{ B_{T^{-1}t_n}(x_n)}e^{u_n}|\psi_n|^2dx=0.
\end{align*}

Integrating by parts, we have
\begin{align*}
\int_{B_{\delta}(p)\setminus B_{t_n\mu_n R}(x_n+t_ny_n)}(2e^{2u_n}-e^{u_n}|\psi_n|^2)dx&=\int_{B_{\delta}(p)\setminus B_{t_n\mu_n R}(x_n+t_ny_n)}(-\Delta u_n)dx\notag\\
&=-\int_{\partial B_{\delta}(p)}\frac{\partial u_n}{\partial r}dx+\int_{\partial B_{ R}(0)}\frac{\partial \tilde{v}_n}{\partial r}dx =o(1),
\end{align*} where $\lim_{\delta\to 0}\lim_{R\to\infty}\lim_{n\to\infty} o(1)=0$.

By above two equalities, we get
\begin{align*}
&\lim_{\delta\to 0}\lim_{R\to\infty}\lim_{n\to\infty}\int_{B_{\delta}(p)}e^{2u_n}dx\\&=\lim_{\delta\to 0}\lim_{R\to\infty}\lim_{n\to\infty}\int_{B_{\delta}(p)\setminus B_{t_n\mu_n R}(x_n+t_ny_n)}e^{2u_n}dx+ \lim_{\delta\to 0}\lim_{R\to\infty}\lim_{n\to\infty}\int_{ B_{t_n\mu_n R}(x_n+t_ny_n)}e^{2u_n}dx=\int_{\R^2}e^{2\tilde{v}}dx.
\end{align*}

\

\textbf{Case 3 in Lemma \ref{lem:04}:} $(v_n,\phi_n)\to (v,\phi)$ in $C^2_{loc}(\R^2\setminus \{0\})$ and $(v,\phi)$ is a super-Liouville type bubble, which is the second bubble.

\

Since there is no third bubble, similar to the proof of Lemma \ref{lem:04}, then we  have
\begin{align*}
\lim_{T\to\infty}  \lim_{\delta\to 0}\lim_{n\to\infty}  \sup_{t\in [Tt_n,\delta]}\int_{B_{2t}(x_n)\setminus B_t(x_n)}( e^{2u_n} + |\psi_n|^4)dx&=0,\\ \lim_{T\to\infty}  \lim_{R\to +\infty}\lim_{n\to\infty}  \sup_{t\in [\lambda_nR,T^{-1}t_n]}\int_{B_{2t}(x_n)\setminus B_t(x_n)}( e^{2u_n} + |\psi_n|^4)dx &=0,
\end{align*} which implies the energy identity
\begin{equation}\label{equat:19}
\lim_{T\to\infty}\lim_{\delta\to 0}\lim_{n\to\infty}\int_{B_\delta(p)\setminus B_{Tt_n}(x_n)}|\psi_n|^4dx=\lim_{T\to\infty}\lim_{R\to +\infty}\lim_{n\to\infty}\int_{B_{T^{-1}t_n}(x_n)\setminus B_{\lambda_nR}(x_n)}|\psi_n|^4dx=0.
\end{equation}

On one hand, integrating by parts, we have
\begin{align*}
\int_{B_{\delta}(p)\setminus B_{t_n T}(x_n)}(2e^{2u_n}-e^{u_n}|\psi_n|^2)dx&=\int_{B_{\delta}(p)\setminus B_{t_n T}(x_n)}(-\Delta u_n)dx\notag\\
&=-\int_{\partial B_{\delta}(p)}\frac{\partial u_n}{\partial r}dx+\int_{\partial B_{ T}(0)}\frac{\partial v_n}{\partial r}dx =o(1),
\end{align*} where $\lim_{T\to\infty}\lim_{\delta\to 0}\lim_{n\to\infty} o(1)=0$, which implies (by using \eqref{equat:19}) $$\lim_{T\to\infty}\lim_{\delta\to 0}\lim_{n\to\infty}\int_{B_\delta(p)\setminus B_{Tt_n}(x_n)}e^{2u_n}dx=0.$$

On the other hand, by the proof in \textbf{Case 3} of Lemma \ref{lem:04}, we get
\begin{align*}
\lim_{T\to\infty}\lim_{n\to\infty}\int_{ B_{T^{-1}t_n}(x_n)}(2e^{2u_n}-e^{u_n}|\psi_n|^2)dx= \lim_{T\to\infty}\lim_{n\to\infty}\int_{ B_{T^{-1}}(0)}(2e^{2v_n}-e^{v_n}|\phi_n|^2)dx=\gamma=0,
\end{align*} which implies (by using \eqref{equat:19})
\begin{align*}
&\lim_{T\to\infty}\lim_{n\to\infty}\int_{ B_{T^{-1}t_n}(x_n)}2e^{2u_n}dx\\&= \lim_{T\to\infty}\lim_{n\to\infty}\int_{ B_{T^{-1}t_n}(x_n)}e^{u_n}|\psi_n|^2dx\\
&=\lim_{R\to\infty}\lim_{T\to\infty}\lim_{n\to\infty}\int_{ B_{T^{-1}t_n}(x_n)\setminus B_{\lambda_nR}(x_n)}e^{u_n}|\psi_n|^2dx+ \lim_{R\to\infty}\lim_{T\to\infty}\lim_{n\to\infty}\int_{ B_{\lambda_nR}(x_n)}e^{u_n}|\psi_n|^2dx=0.
\end{align*}
Thus,
\begin{align*}
\lim_{\delta\to 0}\lim_{n\to\infty}\int_{B_\delta(p)}e^{2u_n}dx&= \lim_{T\to\infty}\lim_{\delta\to 0}\lim_{n\to\infty}\int_{B_\delta(p)\setminus B_{Tt_n}(x_n)}e^{2u_n}dx+ \lim_{T\to\infty}\lim_{n\to\infty}\int_{ B_{T^{-1}t_n}(x_n)}e^{2u_n}dx\\
&\quad +\lim_{T\to\infty}\lim_{n\to\infty}\int_{B_{Tt_n}(x_n)\setminus B_{T^{-1}t_n}(x_n)}2e^{2u_n}dx\\
&=\lim_{T\to\infty}\lim_{n\to\infty}\int_{B_{T}(0)\setminus B_{T^{-1}}(0)}2e^{2v_n}dx=\int_{\R^2}e^{2v}dx.
\end{align*}
We proved the lemma.
\end{proof}

\

\begin{proof}[\textbf{Proof of Theorem \ref{thm:main-5}:}] It is easy to see that the conclusion of Theorem \ref{thm:main-5} is a consequence of Lemma \ref{lem:05} and Lemma \ref{lem:06}.
\end{proof}

\

\begin{proof}[\textbf{Proof of Theorem \ref{thm:main-6}:}]
By Theorem \ref{thm:main-1}, we have following two cases corresponding to two conclusions of Theorem \ref{thm:main-6}.

\

\textbf{Case 1:} $u_n$ is uniformly bounded in $L^\infty_{loc}(M\setminus \Sigma)$.

\

In this case,  passing to a subsequence, we have $(u_n,\psi_n)\to (u,\psi)$ in $C^2_{loc}(M\setminus \Sigma)$ where $(u,\psi)$ satisfies
 \begin{equation*}
    \begin{cases}
          -\Delta u &= 2e^{2u} - e^u |\psi|^2, \\
        \slashed{D} \psi &= -(e^u + 2F|\psi|^2) \psi,
    \end{cases}\ \ in \ \ M\setminus \Sigma.
  \end{equation*}
By Proposition \ref{Removability of local singularity} and the proof of Lemma \ref{lem:04}, for each $p\in\Sigma$, we know that $$u(x)=-\frac{\gamma_p}{2\pi}\ln |x-p|+O(1)\ \ near\ \ p,$$  where $\gamma<2\pi$ is a constant defined by $$\gamma_p=\lim_{\delta\rightarrow 0}\lim_{n\rightarrow \infty} \int_{B_\delta(p) } \left(2e^{2u_n} - e^{u_n} |\psi_n|^2 \right)dx=0.$$  Combining with the removable singularity theorem for spinor's equation (see Theorem 5.1 in \cite{Jost-Liu-Zhu}), we get that $(u,\psi)$ is a smooth solution in $M$, which implies the first conclusion $(i)$ of $(1)$. The second conclusion $(ii)$ follows from that $m(p)=\gamma_p=0$ and the third conclusion follows from a similar  proof for the second conclusion of  Lemma \ref{lem:06}.

\

\textbf{Case 2:} Passing to a subsequence, $u_n\to -\infty$  uniformly on any compact subset of $M\setminus \Sigma$.

\

In this case, one can check that all the arguments in the proof of Lemma \ref{lem:Green funciton for u_n}, Theorem \ref{thm:main-4} and Theorem \ref{thm:main-5} still hold. Then the conclusions of $(2)$ follows from the proof of Theorem \ref{thm:main-4} and Theorem \ref{thm:main-5}.

We finished the proof.
\end{proof}


\begin{thebibliography}{10}

\bibitem{Brezis}
H. Brezis and F. Merle,  \emph{Uniform estimate and blow up behaviour for solutions of $-\Delta u=V(x)e^u$ in two dimensions}, Comm. Partial Differential Equations 16 (1991) 1223-1253.


\bibitem{Chen-Jost-Wang-1}
Q. Chen, J. Jost and G. Wang, \emph{Liouville theorems for Dirac-harmonic maps}, J. Math. Phys. 48 (2007), no. 11, 113517, 13pp.

\bibitem{Chen-Jost-Wang-2}
Q. Chen, J. Jost and G. Wang, \emph{Nonlinear Dirac equations on Riemann surfaces}, Ann. Global Anal. Geom. 33 (2008), no. 3, 253-270


\bibitem{Chen-Liu-Zhu}
Y. Chen, L. Liu and M. Zhu, \emph{Bubbling analysis for a nonlinear Dirac equation on surfaces}, Acta Math. Sci. Ser. B (Engl. Ed.) 44 (2024), no. 6, 2073-2082.

\bibitem{DingWeiyueandTiangang}
W. Ding and G. Tian, \emph{Energy identity for a class of approximate harmonic maps from surfaces}, Comm. Anal. Geom. \textbf{3} (1995), no.~3-4,  543--554.

\bibitem{Li}
Y. Li,  \emph{ Harnack type inequality: the method of moving planes}, Commun. Math. Phys. 200(2), 421-444 (1999).

\bibitem{Li-Shafrir}
Y. Li and I. Shafrir, \emph{Blow-up analysis for solutions of $- \Delta u=Ve^u$ in dimension two,} Indiana Univ. Math. J., \textbf{43} (1994), 1255-1270.

\bibitem{Jost-Liu-Zhu}
J. Jost, L. Liu and M. Zhu, \emph{Geometric analysis of the action functional of the nonlinear supersymmetric sigma model},  Calc. Var. Partial Differential Equations 61 (2022), no. 3, Paper No. 112, 26 pp

\bibitem{Jost-Wang-Zhou}
J. Jost, G. Wang and C. Zhou, \emph{Super-Liouville equations on closed Riemann surfaces}, Comm. Partial Differential Equations \textbf{32} (2007), no. 7-9, 1103-1128.

\bibitem{Jost-Wang-Zhou-Zhu-1}
J. Jost, G. Wang, C. Zhou and M. Zhu, \emph{Energy identities and blow-up analysis for solutions of the super Liouville equation},  J. Math. Pures Appl. (9) 92 (2009), no. 3, 295-312.

\bibitem{Jost-Zhou-Zhu}
J. Jost,  C. Zhou and M. Zhu, \emph{Vanishing Pohozaev constant and removability of singularities},  J. Differential Geom. 111 (2019), no. 1, 91-144.

\bibitem{lawson1989spin}
H. B. Lawson and M. L. Michelsohn, \emph{Spin geometry}, vol.~38, Princeton University Press, 1989.


\bibitem{Polyakov}
A.M. Polyakov, \emph{Quantum geometry of fermionic strings}, Phys. Lett. B \textbf{103} (1981) 207-210.

\bibitem{Taimanov}
I. A. Taimanov, \emph{The two-dimensional Dirac operator and the theory of surfaces} (Russian, with Russian summary), Uspekhi Mat. Nauk 61 (2006), no. 1(367), 85-164; English transl., Russian Math. Surveys 61 (2006), no. 1, 79-159.

\bibitem{zhao2007energy}
L. Zhao, \emph{Energy identities for Dirac-harmonic maps}, Calc. Var. Partial Differential Equations \textbf{28} (2007), no.~1,
  121-138.

\bibitem{Zhu}
M. Zhu, \emph{Quantization for a nonlinear Dirac equation}, Proc. Amer. Math. Soc. 144 (2016), no. 10, 4533-4544.


\end{thebibliography}

\providecommand{\bysame}{\leavevmode\hbox to3em{\hrulefill}\thinspace}
\providecommand{\MR}{\relax\ifhmode\unskip\space\fi MR }
\providecommand{\MRhref}[2]{%
  \href{http://www.ams.org/mathscinet-getitem?mr=#1}{#2}
}
\providecommand{\href}[2]{#2}

\end{document}